\theoremstyle{plain}
\newtheorem{theorem}{Theorem}[section]
\newtheorem{corollary}[theorem]{Corollary}
\newtheorem{definition}[theorem]{Definition}
\newtheorem{example}[theorem]{Example}
\newtheorem{lemma}[theorem]{Lemma}
\newtheorem{notation}[theorem]{Notation}
\newtheorem{proposition}[theorem]{Proposition}
\newtheorem{remark}[theorem]{Remark}
\numberwithin{equation}{section}
\numberwithin{figure}{section}
\numberwithin{table}{section}
\renewcommand{\sp}{\mathrm{sp}}
\begin{document}
\title[$L^{q}$-spectrum for finite type measures]{The $L^{q}$-spectrum for a
class of self-similar measures with overlap}
\author{Kathryn E. Hare}
\address{Dept. of Pure Mathematics\\
University of Waterloo\\
Waterloo, Ont., Canada}
\email{kehare@uwaterloo.ca}
\thanks{This research was supported in part by NSERC grants RGPIN 2016-03719
and 2019-03930}
\author{Kevin G. Hare}
\address{Dept. of Pure Mathematics\\
University of Waterloo\\
Waterloo, Ont., Canada}
\email{kghare@uwaterloo.ca}
\author{Wanchun Shen}
\address{Dept. of Pure Mathematics\\
University of Waterloo\\
Waterloo, Ont., Canada}
\email{w35shen@edu.uwaterloo.ca}
\subjclass[2000]{Primary 28A80; Secondary 28A78}
\keywords{$L^{q}$-spectrum, multifractal formalism, self-similar measure,
finite type}
\thanks{This paper is in final form and no version of it will be submitted
for publication elsewhere.}

\begin{abstract}
It is known that the heuristic principle, referred to as the multifractal
formalism, need not hold for self-similar measures with overlap, such as the 
$3$-fold convolution of the Cantor measure and certain Bernoulli
convolutions. In this paper we study an important function in the
multifractal theory, the $L^{q}$-spectrum, $\tau (q)$, for measures of
finite type, a class of self-similar measures that includes these examples.
Corresponding to each measure, we introduce finitely many variants on the $%
L^{q}$-spectrum which arise naturally from the finite type structure and are
often easier to understand than $\tau $. We show that $\tau$ is always
bounded by the minimum of these variants and is equal to the minimum variant
for $q\geq 0$. This particular variant coincides with the $L^{q}$-spectrum
of the measure $\mu$ restricted to appropriate subsets of its support. If
the IFS satisfies particular structural properties, which do hold for the
above examples, then $\tau$ is shown to be the minimum of these variants for
all $q$. Under certain assumptions on the local dimensions of $\mu$, we
prove that the minimum variant for $q \ll 0$ coincides with the straight
line having slope equal to the maximum local dimension of $\mu $. Again,
this is the case with the examples above. More generally, bounds are given
for $\tau$ and its variants in terms of notions closely related to the local
dimensions of $\mu $.
\end{abstract}

\maketitle

\section{\protect\bigskip Introduction}

By the local dimension of a probability measure $\mu$ at a point $x$ in its
support we mean the quantity 
\begin{equation*}
\dim_{\mathrm{loc}}\mu (x)=\lim_{r\rightarrow 0}\frac{\log \mu (B(x,r))}{%
\log r}.
\end{equation*}
It is natural to ask about the values that are attained as local dimensions
of the measure and the size of the sets $E_{\alpha} =\{x:\dim _{\mathrm{loc}%
}\mu (x)=\alpha \}$. For self-similar measures that satisfy the open set
condition, it is well known that the set of attainable local dimensions is a
closed interval and there are simple formulas for the endpoints of this
interval. The Hausdorff dimension of $E_{\alpha}$ is equal to the Legendre
transform of the $L^{q}$-spectrum of $\mu $, $\tau (q)$, at $\alpha$ (see
Definition \ref{D:tau}) meaning, $\dim E_{\alpha}= \inf_{q\in \mathbb{R}%
}(q\alpha -\tau (q))$. In this case $\tau$ is a differentiable function on
all of $\mathbb{R}$. This is known as the multifractal formalism. We refer
the reader to \cite{Fa} for proofs of these facts.

The local behaviour of `overlapping' self-similar measures is not as well
understood and the multifractal formalism need not hold. For instance, in 
\cite{HL} Hu and Lau discovered that the set of local dimensions of the $3$%
-fold convolution of the classical middle-third Cantor measure consists of a
closed interval and an isolated point which is its maximum local dimension.
Specifically, this maximum local dimension occurs at the two endpoints of
the support of the measure. This unexpected property was later found to be
true for more general overlapping regular Cantor-like measures; see \cite%
{BHM, HHN, Sh}. Lau and Ngai in \cite{LN} discovered that if the
self-similar measure arises from an IFS that satisfies only the weak
separation property and its $L^{q}$-spectrum is differentiable at $q_{0}>0$,
then $\dim E_{\alpha}$ is the Legendre transform of $\tau$ at $\alpha$ if $%
\alpha =\tau ^{\prime}(q_{0})$. But there is no guarantee in their theorem
that $\tau$ is differentiable and no information about $\tau (q)$ for $q<0$.
In \cite{LW}, Lau and Wang showed that the $L^{q}$-spectrum of the $3$-fold
convolution of the Cantor measure has one point of non-differentiability at $%
q_{0}<0$. The $L^{q}$-spectrum is equal to the line with slope equal to its
maximum local dimension for $q<q_{0}$ and coincides with the $L^{q}$%
-spectrum of the measure restricted to a closed subinterval of the interior
of its support for $q\geq q_{0}$. A similar result was found for more
general regular, Cantor-like measures in \cite{Sh}. Another much studied
class of overlapping measures are the uniform Bernoulli convolutions with
contraction factor the inverse of a Pisot number $\varrho $. In the case of $%
\varrho =$ golden mean, Feng \cite{FengLimited} found that even though the
set of local dimensions of this measure is a closed interval, its $L^{q}$%
-spectrum is not differentiable at some $q_{0}<0$ and coincides with the
line with slope equal to the maximum local dimension for $q<q_{0}$.

These overlapping measures are all examples of self-similar measures of
finite type. The notion of finite type was introduced by Ngai and Wang in 
\cite{NW}, and is essentially weaker than the open set condition and
stronger than the weak separation property. In \cite{FengSmoothness}, Feng
proved that the $L^{q}$-spectrum of any finite type measure is
differentiable for all $q>0$. In \cite{FengLimited, FengLy}, he extensively
studied the local dimension theory of uniform Bernoulli convolutions with
simple Pisot inverses as the contraction factor. The local dimension theory
was extended to general self-similar measures of finite type in a series of
papers by two of the authors with various coauthors, \cite{HHM, HHN, HHS}.

In this note, we continue the study of the $L^{q}$-spectrum of measures of
finite type. Important structural building blocks for measures of finite
type are combinatorial objects known as loop classes. Our general strategy
is to `decompose' the support of the measure into these loop classes, study
the restriction of its $L^{q}$-spectrum to each such a loop class $L$
(denoted $\tau _{L})$ and then try to recover information about $\tau$ from
the data collected for each loop class. As seen in \cite{HHM, HHN, HHS}, the
set of local dimensions corresponding to a given loop class is often a
closed interval and hence according to the multifractal philosophy, it is
reasonable to expect $\tau _{L}$ to be better behaved than $\tau $.

We have that $\tau \leq \min \tau _{L}$ where the minimum is taken over all
maximal loop classes $L$. We prove that equality holds under certain
structural assumptions. In particular, we give criteria which ensures there
is a some $q_{0}<0$ such that $\tau (q)=qd$ for $d=\max \{\dim _{\mathrm{loc}%
}\mu (x):x\}$ and all $q<q_{0}$. 
The line $y=qd$ arises because it is the
function $\tau _{L}(q)$ for a (suitable) singleton maximal loop class $L$.
In Example \ref{ExSpecial} we give an example showing that if these 
    criteria are not met, it is possible for $\tau$ not to have this property.
In particular, this example has the property that $\tau(q)/q \not\to d$ as
    $q \to -\infty$.

In general, for $q\geq q_{0}$, 
    the $L^{q}$-spectrum coincides with $\tau _{L}$ for $L$
the special maximal loop class known as the essential class. It also
coincides with the $L^{q}$-spectrum of the measure restricted to various
proper subsets of its support which are easily described in terms of the
finite type data (and which in many examples can be any proper closed subset
of the interior of its support). A lower bound for $q_{0}$ is given in terms
of the finite type data. These structural assumptions are satisfied in many
examples, including regular Cantor-like measures and biased Bernoulli
convolutions with contraction factor the inverse of a simple Pisot number.

The proofs of most of these results, including a detailed discussion of what
they imply about finite type Cantor-like measures and Bernoulli
convolutions, can be found in Section \ref{S:Main}.

In Section \ref{S:LoopCl} we discuss the local dimensional behaviour of the
measure on loop classes and compare this with the (usual) local dimension of
the measure. In Section \ref{sp} we introduce the notion of the loop class $%
L^{q}$-spectrum, $\tau _{L}$, and establish basic properties of these
variants on the $L^{q}$-spectrum. In particular, we find bounds on the
functions $\tau _{L}$, determine their asymptotic behaviour and prove that
the $L^{q}$-spectrum $\tau$ is dominated by the minimum of these $\tau _{L}$%
. In Section \ref{sec:basic}, we recall the definitions, notation and basic
facts about finite type measures that are needed in the paper.

In \cite{FL} (see also \cite{FengLy}), Feng and Lau showed that suitable
restrictions of self-similar measures having only the weak separation
property satisfy the multifractal formalism. However, the local dimensions
of the original measure and its restrictions will not, in general, coincide
at all points in the support of the restricted measure. Moreover, in many
examples, the $L^{q}$-spectrum of the original measure and its restrictions
only agree for large $q$.

\section{Basic Definitions and Terminology}

\label{sec:basic}

We begin by reviewing the notion of finite type and the related concepts and
terminology that will be used throughout the paper. These notes are
basically summarized from \cite{FengSmoothness, HHM, HHS} where the facts
which are stated here are either proved or references given.

\subsection{Iterated function systems and finite type}

By an iterated function system (IFS) we will mean a finite set of
contractions, 
\begin{equation}
S_{j}(x)=\rho _{j}x+d_{j}:\mathbb{R\rightarrow R}\text{ for }j=0,1,\dots,m,
\label{IFS}
\end{equation}
where $m\geq 1$ and $0<$ $\left\vert \rho _{j}\right\vert <1$. When all $%
\rho _{j}$ are equal and positive, the IFS is referred to as \textbf{%
equicontractive}. Each IFS generates a unique invariant, compact set $K$,
known as its associated self-similar set, satisfying $K=
\bigcup_{j=0}^{m}S_{j}(K)$. By rescaling the $d_{j}$, if necessary, we can
assume the convex hull of $K$ is $[0,1]$.

Assume we are given probabilities $p_{j}>0$ satisfying $%
\sum_{j=0}^{m}p_{j}=1 $. There is a unique self-similar probability measure $%
\mu$ associated with the IFS $\{S_{j}\}_{j=0}^{m}$ and probabilities $%
\{p_{j}\}_{j=0}^{m}$, supported on the self-similar set associated with the
IFS and satisfying the rule 
\begin{equation*}
\mu =\sum_{j=0}^{m}p_{j}\mu \circ S_{j}^{-1}.
\end{equation*}
The measure is said to be equicontractive if the IFS is equicontractive.

Given a finite word $\omega =(\omega _{1},\dots,\omega _{j})$, on the
alphabet $\{0,1,\dots,m\}$, we will let $\omega ^{-}=(\omega
_{1},\dots,\omega _{j-1})$, $S_{\omega}=S_{\omega _{1}}\circ S_{\omega
_{2}}\circ \cdot \cdot \cdot \circ S_{\omega _{j}}$ and $\rho
_{\omega}=\prod_{i=1}^{j}\rho _{\omega _{i}}$. Let 
\begin{equation*}
\rho _{\min}=\min_{j}\left\vert \rho _{j}\right\vert
\end{equation*}
and put 
\begin{equation*}
\Lambda _{n}=\{\text{finite words }\omega :\left\vert \rho _{\omega
}\right\vert \leq \rho _{\min}^{n}\text{ and }\left\vert \rho _{\omega
^{-}}\right\vert >\rho _{\min}^{n}\}.
\end{equation*}

The notion of finite type was introduced by Ngai and Wang in \cite{NW}. The
definition we will use is slightly less general, but is simpler and seen to
be equivalent to the finite type definition given for equicontractive IFS in 
\cite{FengSmoothness}. It includes all the examples of finite type measures
in $\mathbb{R}$ of which we are aware.

\begin{definition}
Assume $\{S_{j}\}$ is an IFS as in equation \eqref{IFS}. The words $\omega
,\tau \in \Lambda _{n}$ are said to be neighbours if $S_{\omega}(0,1)\cap
S_{\tau}(0,1)\neq \emptyset $. Denote by $\mathcal{N}(\omega )$ the set of
all neighbours of $\omega $. We say that $\omega \in \Lambda _{n}$ and $\tau
\in \Lambda _{m}$ have the same neighbourhood type if there is a map $%
f(x)=\pm \rho _{\min}^{n-m}x+c$ such that 
\begin{equation*}
\{f\circ S_{\eta}:\eta \in \mathcal{N}(\omega )\}=\{S_{\nu}:\nu \in \mathcal{%
N}(\tau )\}\ \ \mathrm{and}\ \ f\circ S_{\omega}=S_{\tau}.
\end{equation*}
The IFS is said to be of \textbf{finite type} if there are only finitely
many neighbourhood types. Any associated self-similar measure is also said
to be of finite type.
\end{definition}

It was shown in \cite{Ng} that an IFS of finite type satisfies the weak
separation property, but not necessarily the open set condition.

Here are two interesting and much studied classes of measures of finite type
that fail to satisfy the open set condition. We will refer to these often in
the paper.

\begin{example}[Bernoulli convolutions]
Consider the IFS: $S_{0}(x)=\varrho x, S_{1}(x)=\varrho x+1-\varrho$, where $%
1 < \varrho < 2$ is the inverse of a Pisot number\footnote{%
Recall that a Pisot number is an algebraic integer greater than $1$, all of
whose Galois conjugates are $<1$ in absolute value.} such as the golden
mean. If the two probabilities are equal ($p_{0}=p_{1}=1/2$) we call the
associated self-similar measure a uniform Bernoulli convolution and
otherwise it is said to be biased. The self-similar set is $[0,1]$. These
measures are all of finite type.
\end{example}

\begin{example}[$(m,d$)-Cantor measures]
Consider the IFS: 
\begin{equation*}
S_{j}(x)=\frac{x}{d}+\frac{j(d-1)}{dm}\text{ for }j=0,\dots,m
\end{equation*}
with integers $m\geq d\geq 2$ and probabilities $\{p_{j}\}$. The associated
self-similar measures are called $(m,d)$-Cantor measures and they have
support $[0,1]$. If we take $p_{j}=\binom{m}{j}2^{-m}$, the resulting
measure is the $m$-fold convolution of the uniform Cantor measure on the
Cantor set with ratio of dissection $1/d$, rescaled to have support $[0,1]$.
For example, if $d=3=m$ and the probabilities are $1/8,3/8,3/8,1/8$, the
self-similar measure is the rescaled $3$-fold convolution of the classical
middle-third Cantor measure. These measures are all of finite type.
\end{example}

\subsection{Net intervals and characteristic vectors}

\begin{definition}
For each positive integer $n$, let $h_{1},\dots ,h_{s_{n}}$ be the
collection of elements of the set $\{S_{\omega }(0),S_{\omega }(1):\omega
\in \Lambda _{n}\}$, listed in increasing order. Put 
\begin{equation*}
\mathcal{F}_{n}=\{[h_{j},h_{j+1}]:1\leq j\leq s_{n}-1\text{ and }%
(h_{j},h_{j+1})\cap K\neq \emptyset \}\text{.}
\end{equation*}%
Elements of $\mathcal{F}_{n}$ are called \textbf{net intervals of level }$n$%
. The interval $[0,1]$ is understood to be the (only) net interval of level $%
0$.
\end{definition}

For each IFS\ of finite type there is some $c>0$ such that 
\begin{equation*}
c\rho _{\min}^{n}\leq \ell(\Delta) \leq \rho _{\min}^{n}\text{.}
\end{equation*}
for all net intervals $\Delta$ of level $n$. Here $\ell(\Delta)$ is the
length of a net interval $\Delta$.

Given $\Delta =[a,b]\in \mathcal{F}_{n}$, we let $\ell _{n}(\Delta )=\rho
_{\min }^{-n}(b-a)$. By the \textbf{neighbour set} of $\Delta $ we mean the
ordered tuple

\begin{equation}
V_{n}(\Delta )=((a_{1},L_{1}),(a_{2},L_{2}),\dots ,(a_{jJ},L_{J})),
\label{nb}
\end{equation}
where for each $i$ there is some $\sigma _{i}\in \Lambda _{n}$ such that $%
S_{\sigma _{i}}(K)\cap \mathrm{int}\Delta \neq \emptyset$, $\rho
_{\min}^{-n}\rho _{\sigma _{i}}=L_{i}$ and $\rho _{\min}^{-n}(a-S_{\sigma
_{i}}(0))=a_{i}$. (We will say $\sigma _{i}$ \textbf{is associated with} $%
(a_{i}, L_{i})$ \textbf{and} $\Delta $.) We will order these tuples so that $%
a_{i}\leq a_{i+1}$ and if $a_{i}=a_{i+1}$, then $L_{i}<L_{i+1}$. We note
that some $L_{i}$ could be negative if there are negative contraction
factors.

For each $\Delta \in \mathcal{F}_{n}$, $n\geq 1$, there is a unique element $%
\widehat{\Delta }\in \mathcal{F}_{n-1}$ which contains $\Delta $, called the 
\textbf{parent} of \textbf{child} $\Delta $. Suppose $\Delta \in \mathcal{F}%
_{n}$ has parent $\widehat{\Delta }$. If $\widehat{\Delta }$ has $J$
children with the same normalized length and neighbourhood set as $\Delta $,
we order these from left to right as $\Delta _{1},\Delta _{2},\dots ,\Delta
_{J}$ and denote by $t_{n}(\Delta )$ the integer $t$ such that $\Delta
_{t}=\Delta $.

\begin{definition}
The \textbf{characteristic vector of} $\Delta \in \mathcal{F}_{n}$ is
defined to be the triple 
\begin{equation*}
\mathcal{C}_{n}(\Delta _{n})=(\ell _{n}(\Delta ),V_{n}(\Delta ),t_{n}(\Delta
)).
\end{equation*}
We denote by $\gamma _{0}$ the characteristic vector of $[0,1]$.
\end{definition}

Each net interval $\Delta \in \mathcal{F}_{n}$ is uniquely identified by the 
$(n+1)$-tuple $(\gamma _{0},\gamma _{1},\dots,\gamma _{n})$, called the 
\textbf{symbolic representation of} $\Delta $, where $\gamma _{j}=$ $%
\mathcal{C}_{j}(\Delta _{j})$, $\Delta _{0}=[0,1]$, $\Delta _{n}=\Delta $,
and for each $j=1,\dots,n$, $\Delta _{j-1}$ is the parent of $\Delta _{j}$.
Similarly, for each $x\in \lbrack 0,1]$, the \textbf{symbolic representation
of} $x$ will be the (infinite) sequence of characteristic vectors $(\mathcal{%
C}_{0}(\Delta _{0}),\mathcal{C}_{1}(\Delta _{1}),\dots)$ where $x\in \Delta
_{n}\in \mathcal{F}_{n}$ for each $n$ and $\Delta _{j-1}$ is the parent of $%
\Delta _{j}$. The symbolic representation uniquely determines $x$ and is
unique unless $x$ is the endpoint of some net interval, in which case there
can be two different symbolic representations (and two net intervals of
level $n$ containing $x$). We will write $\Delta _{n}(x)$ for any net
interval of level $n$ containing $x$. An important fact is that the
characteristic vector of a child is uniquely determined by the
characteristic vector of the parent, thus we can also speak of the
parent/child of characteristic vectors.

By an \textbf{admissible path} $(\chi _{1},\chi _{2},\dots)$ (or path, for
short) we mean a (finite or infinite) sequence of characteristic vectors
where each $\chi _{j+1}$ is the characteristic vector of a child of $\chi
_{j}$. Note that a path need not start with $\gamma _{0}$. We write $%
\left\vert \eta \right\vert$ for the length of the finite path $\eta $. We
remark that if we write $(\chi _{1},\chi _{2},\dots)$ (finite or infinite)
for some characteristic vectors $\chi _{i}$, it is implied that this is an
admissible path. When we write $\sigma |n$ we mean the restriction of the
path $\sigma$ to its first $n+1$ letters.

Since the characteristic vectors of the children of $\Delta$ depend only on
the characteristic vector of $\Delta $, we can construct a finite directed
graph of characteristic vectors, called the transition graph, where we have
a directed edge from $\gamma$ to $\beta$ if there is a net interval $\Delta$
with characteristic vector $\gamma$ and a child of $\Delta$ with
characteristic vector $\beta $. The paths in the graph are the admissible
paths.

\begin{example}
In the case of the Bernoulli convolution with contraction factor the inverse
of the golden mean, Feng \cite[Section 4.1]{FengLimited} has shown that
there are 7 characteristic vectors, $\gamma _{0},\gamma _{1},\dots,\gamma
_{6}$. See Figure \ref{fig:Bernoulli} for its transition graph. The symbolic
representation of $0$ is the sequence $(\gamma _{0},\gamma _{1},\gamma
_{1},\dots)$, and $(\gamma _{0},\gamma _{3},\gamma _{3},\dots)$ is the
symbolic representation of $1$.
\end{example}

Every IFS of finite type has only finitely many (distinct) characteristic
vectors; see \cite{HHS}. We will denote this finite set of characteristic
vectors by $\Omega $.

Suppose $\Delta$ and $\Delta ^{\prime}$ are any net intervals at levels $n$
and $m$ respectively, with the same characteristic vector. Assume their
(common) neighbour set is $V=((a_{j},L_{j}))_{j=1}^{J}$ . It is a
consequence of the definitions that there is a constant $d$ such that if $%
\sigma _{i}\in \Lambda _{n}$ is associated with $(a_{i},L_{i})$ and $\Delta$%
, and $\sigma _{i}^{\prime}\in \Lambda _{m}$ is associated with $%
(a_{i},L_{i})$, but with respect to $\Delta ^{\prime}$, then $S_{\sigma
_{i}^{\prime}}=\rho _{\min}^{m-n}S_{\sigma _{i}}+d$.

As $S_{\sigma _{i}}(K)\cap \mathrm{int}\Delta$ is non-empty, there is some
word $\lambda _{i}\in \Lambda _{N_{i}}$ such that $S_{\sigma _{i}\lambda
_{i}}[0,1]\subseteq \Delta $. But then, also, $S_{\sigma _{i}^{\prime
}\lambda _{i}}[0,1]\subseteq \Delta ^{\prime}$. Since there are only
finitely many characteristic vectors and only finitely many paths of bounded
length, this proves Part \ref{incl i} below. Part \ref{incl ii} is proved
similarly.

\begin{lemma}
\label{incl} There is a finite set of words $\mathcal{W}$ and $N\in \mathbb{N%
}$ with the following properties:

\begin{enumerate}
\item If $\Delta_{n}\in \mathcal{F}_{n}$ and $S_{\sigma}[0,1]\supseteq
\Delta _{n}$ for some $\sigma \in \Lambda _{n}$, then there is some $\nu \in 
\mathcal{W}$ such that $\sigma \nu \in \Lambda _{n+N}$ and $S_{\sigma \nu
}[0,1]\subseteq \Delta _{n}$. \label{incl i}

\item If $\Delta _{n+N}\subseteq \Delta _{n}$ belong to $\mathcal{F}_{n+N}$
and $\mathcal{F}_{n}$ respectively, and $S_{\sigma}[0,1]\supseteq \Delta
_{n} $ for $\sigma \in \Lambda _{n}$, then there is some $v\in \mathcal{W}$
such that $\sigma v\in \Lambda _{n+N}$ and $S_{\sigma v}[0,1]\bigcap \Delta
_{n+N} $ is empty. \label{incl ii}
\end{enumerate}
\end{lemma}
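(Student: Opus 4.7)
My plan is to exploit the rescaling principle stated immediately before the lemma: whenever two net intervals $\Delta,\Delta'$ share a characteristic vector at levels $n$ and $m$, the words $\sigma_i\in\Lambda_n$ and $\sigma_i'\in\Lambda_m$ associated with the $i$-th entry of their common neighbour set satisfy $S_{\sigma_i'}=\rho_{\min}^{m-n}S_{\sigma_i}+d$. Because $\Omega$ is finite, each characteristic vector has a finite neighbour set, and each vertex of the transition graph has only finitely many children, the pairs $(\chi,i)$ relevant to part~(i) and the triples $(\chi,\pi,i)$ relevant to part~(ii) (where $\pi$ is an admissible $N$-step descent from $\chi$) form finite families for any fixed $N$. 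This will reduce the lemma to exhibiting one witness per configuration and transporting it via the rescaling identity.

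For part~(i), I would fix in each configuration $(\chi,i)$ a representative net interval $\Delta^{(\chi)}$ and an associated word $\sigma_i^{(\chi)}$. Since $S_{\sigma_i^{(\chi)}}(K)\cap\mathrm{int}\,\Delta^{(\chi)}\neq\emptyset$, I pick a point $y\in K$ with image in this interior and then take a prefix $\lambda^{(\chi,i)}$ of its IFS address long enough that $S_{\sigma_i^{(\chi)}\lambda^{(\chi,i)}}[0,1]\subseteq\Delta^{(\chi)}$; such a prefix exists because $\mathrm{diam}\,S_\lambda[0,1]\to 0$ along any address. The rescaling identity then transfers this containment to an arbitrary pair $(\Delta_n,\sigma)$ of the same configuration. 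Padding $\lambda^{(\chi,i)}$ by the uniformly bounded number of extra letters needed to land in $\Lambda_{n+N}$ only shrinks the image and thus preserves containment in $\Delta_n$.

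For part~(ii), I would fix in each triple $(\chi,\pi,i)$ a representative pair $\Delta^{(\chi,\pi)}\subseteq\Delta^{(\chi)}$ with associated word $\sigma_i^{(\chi)}$, and choose $N$ large enough that $\ell(\Delta_{n+N})<\ell(\Delta_n)$ at every level (possible since $\ell(\Delta_{n+N})\leq\rho_{\min}^{n+N}$ and $\ell(\Delta_n)\geq c\rho_{\min}^n$). Then $S_{\sigma_i^{(\chi)}}[0,1]\supseteq\Delta^{(\chi)}\supsetneq\Delta^{(\chi,\pi)}$, so at least one endpoint of the interval $S_{\sigma_i^{(\chi)}}[0,1]$ lies outside $\Delta^{(\chi,\pi)}$. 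Because $K$ has convex hull $[0,1]$, both $0$ and $1$ belong to $K$, and that endpoint equals $S_{\sigma_i^{(\chi)}}(y)$ for some $y\in K$. A sufficiently long prefix $v^{(\chi,\pi,i)}$ of the IFS address of $y$ then makes $S_{\sigma_i^{(\chi)}v^{(\chi,\pi,i)}}[0,1]$ disjoint from $\Delta^{(\chi,\pi)}$, and the rescaling identity plus a final padding step transport this witness to every configuration at every level.

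The main technicality will be to maintain uniformity: a single $N$ and a single finite $\mathcal{W}$ must serve every configuration at every level. I would take $N$ to be the maximum prefix length needed across the finitely many configurations catalogued above and $\mathcal{W}$ the union of all witness words together with their admissible extensions up to length $N$; the finite type combinatorics is precisely what allows such a uniform choice.
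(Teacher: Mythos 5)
Your argument is correct and follows essentially the same route as the paper: fix one witness word per neighbourhood-type configuration and transport it to every level via the rescaling identity $S_{\sigma_i'}=\rho_{\min}^{m-n}S_{\sigma_i}+d$, with the finiteness of the set of characteristic vectors (and of bounded-length paths) supplying the uniform $N$ and $\mathcal{W}$. The paper dispatches part (ii) with ``proved similarly,'' and your endpoint-of-$S_{\sigma}[0,1]$ construction is a correct instantiation of that step, so there is no substantive difference in approach.
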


\subsection{Local dimensions and transition matrices}

\label{sec:local}

\begin{definition}
Given a probability measure $\mu$ on $\mathbb{R}$, by the \textbf{lower
local dimension} of $\mu$ at $x\in \mathrm{supp} \mu$, we mean the number 
\begin{equation*}
\underline{\dim}_{\mathrm{loc}}\mu (x)=\liminf_{r\rightarrow 0^{+}}\frac{%
\log \mu (x-r,x+r)}{\log r}.
\end{equation*}
Replacing the $\liminf$ by $\limsup$ gives the upper local dimension, $%
\overline{\dim}_{\mathrm{loc}}\mu (x)$, and if these two are equal, the
common value is the local dimension of $\mu$ at $x$, denoted $\dim _{\mathrm{%
loc}}\mu (x)$.
\end{definition}

If $\mu$ is a measure of finite type, the local dimensions of $\mu$ can be
expressed in terms of measures of net intervals. Indeed, as all net
intervals of level $n$ have lengths comparable to $\rho _{\min}^{n}$, it
follows that 
\begin{eqnarray}
\dim _{\mathrm{loc}}\mu (x) &=&\lim_{n\rightarrow \infty}\frac{\log (\mu
(\Delta _{n}(x))+\mu (\Delta _{n}^{+}(x))+\mu (\Delta _{n}^{-}(x)))}{n\log
\rho _{\min}}  \label{locdimft} \\
&\leq &\lim_{n\rightarrow \infty}\frac{\log (\mu (\Delta _{n}(x))}{n\log
\rho _{\min}},  \notag
\end{eqnarray}
where $\Delta _{n}^{+}(x),\Delta _{n}^{-}(x)$ are the adjacent, $n$'th level
net intervals on each side of $\Delta _{n}(x)$. A similar statement holds
for the upper and lower local dimensions.

\begin{definition}
Let $\mu $ be a measure of finite type with the notation as in the previous
subsection. Let $\Delta =[a,b]\in \mathcal{F}_{n}$ and $\widehat{\Delta }%
=[c,d]\in \mathcal{F}_{n-1}$ be its parent. Assume $V_{n}(\Delta
)=((a_{j},L_{j}))_{j=1}^{J}$ and $V_{n-1}(\widehat{\Delta }%
)=((c_{i},M_{i}))_{i=1}^{I}$. The \textbf{primitive transition matrix}, $T(%
\mathcal{C}_{n-1}(\widehat{\Delta }),\mathcal{C}_{n}(\Delta ))$, is the $%
I\times J$ matrix $(T_{ij})$ which encapuslates information about the
relationship between the $(c_{i},M_{i})\in V_{n-1}(\widehat{\Delta })$ and $%
(a_{j},L_{j})\in V_{n}(\Delta )$. To be precise, let $\sigma _{i}\in \Lambda
_{n-1}$ be such that $\rho _{\min }^{-n+1}(c-S_{\sigma }(0))=c_{i}$ and $%
\rho _{\min }^{-n+1}\rho _{\sigma }=M_{i}$. Let $\mathcal{T}_{i,j}$ be the
set all $\omega $ such that $\sigma \omega \in \Lambda _{n}$, $\rho _{\min
}^{-n}(a-S_{\sigma _{i}\omega }(0))=a_{j}$ and $\rho _{\min }^{-n}\rho
_{\sigma _{i}\omega }=L_{j}$. Notice that $\mathcal{T}_{i,j}$ depends only $%
S_{\sigma _{i}}$ (or equivalently on $c_{i}$ and $M_{i}$), and not on the
choice of $\sigma _{i}$. We define $T_{i,j}=\sum_{\omega \in \mathcal{T}%
_{i,j}}p_{\omega }$ where the empty sum is taken to be $0$. Given a path $%
(\gamma _{1},\dots ,\gamma _{n})$, we let 
\begin{equation*}
T(\gamma _{1},\dots ,\gamma _{n})=T(\gamma _{1},\gamma _{2})T(\gamma
_{2},\gamma _{3})\cdot \cdot \cdot T(\gamma _{n-1},\gamma _{n}).
\end{equation*}%
We call any such product a \textbf{transition matrix}.
\end{definition}

As explained in \cite{FengSmoothness, HHS}, there are positive constants $%
c_{1},c_{2}$ such that whenever $\Delta \in \mathcal{F}_{n}$ has symbolic
representation $(\gamma _{0},\gamma _{1},\dots,\gamma _{n})$, then 
\begin{equation*}
c_{1}\mu (\Delta _{n})\leq \left\Vert T(\gamma _{0},\gamma _{1},\dots,\gamma
_{n})\right\Vert \leq c_{2}\mu (\Delta _{n}).
\end{equation*}
We say that $\mu (\Delta _{n})$ and $\left\Vert T(\gamma _{0},\gamma
_{1},\dots,\gamma _{n})\right\Vert$ are \textbf{comparable}.

An important fact about transition matrices is that each column of any
transition matrix contains a non-zero entry. Here are two useful
consequences of this. The proofs are left as an exercise and follow from the
definition of the matrix norm. Note that we say a matrix is \textbf{positive}
if all its entries are strictly positive.

\begin{lemma}
\label{useful}Let $A, B, C$ be transition matrices with $B$ positive.

\begin{enumerate}
\item There are positive constants $a_{1},a_{2}$, depending only on $A$,
such that 
\begin{equation*}
a_{1}\left\Vert C\right\Vert \leq \left\Vert AC\right\Vert \leq \left\Vert
A\right\Vert \left\Vert C\right\Vert \leq a_{2}\left\Vert C\right\Vert .
\end{equation*}
\label{useful i}

\item There is a constant $b=b(B)>0$ (independent of $A,C)$ such that 
\begin{equation*}
\left\Vert ABC\right\Vert \geq b\left\Vert A\right\Vert \left\Vert
C\right\Vert .
\end{equation*}
\label{useful ii}
\end{enumerate}
\end{lemma}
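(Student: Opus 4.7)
The plan is to treat both inequalities as direct consequences of three facts: (a) transition matrices have entries in $[0,1]$ and bounded dimensions (since there are only finitely many characteristic vectors, hence only finitely many shapes of primitive transition matrices, and products preserve a bounded number of rows and columns); (b) any two matrix norms on matrices of bounded size are equivalent to the maximum-entry quasi-norm $\|M\|_{\max} := \max_{i,j} |M_{ij}|$, with comparison constants depending only on the maximal dimension; and (c) products of nonnegative matrices are nonnegative, so entries of products can be bounded below by any single term in the defining sum. I would begin by fixing constants $c_{1},c_{2}>0$ so that $c_{1}\|M\|_{\max}\leq \|M\|\leq c_{2}\|M\|_{\max}$ for every transition matrix $M$.

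For Part (i), the right-most inequality $\|A\|\|C\|\leq a_{2}\|C\|$ is trivial with $a_{2}=\|A\|$, and the middle inequality is the usual submultiplicativity of the matrix norm. For the lower bound $a_{1}\|C\|\leq \|AC\|$, pick indices $(j_{0},k_{0})$ with $C_{j_{0},k_{0}}=\|C\|_{\max}$. By the column-nonzero property of $A$, there is a row $i_{0}$ with $A_{i_{0},j_{0}}>0$; set
\begin{equation*}
\alpha(A)\;=\;\min_{j}\bigl\{A_{i,j} : A_{i,j}>0\bigr\}\;>\;0.
\end{equation*}
Then $(AC)_{i_{0},k_{0}}\geq A_{i_{0},j_{0}}C_{j_{0},k_{0}}\geq \alpha(A)\|C\|_{\max}$, and using the norm equivalence gives $\|AC\|\geq (c_{1}\alpha(A)/c_{2})\|C\|$, so we may take $a_{1}=c_{1}\alpha(A)/c_{2}$, which depends only on $A$.

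For Part (ii), the positivity of $B$ means $\beta:=\min_{j,\ell}B_{j,\ell}>0$. Choose $(i_{0},j_{0})$ with $A_{i_{0},j_{0}}=\|A\|_{\max}$ and $(\ell_{0},k_{0})$ with $C_{\ell_{0},k_{0}}=\|C\|_{\max}$. Keeping only a single term of the defining triple sum gives
\begin{equation*}
(ABC)_{i_{0},k_{0}}\;\geq\;A_{i_{0},j_{0}}\,B_{j_{0},\ell_{0}}\,C_{\ell_{0},k_{0}}\;\geq\;\beta\,\|A\|_{\max}\|C\|_{\max},
\end{equation*}
and then converting back to $\|\cdot\|$ via the constants $c_{1},c_{2}$ yields $\|ABC\|\geq b\|A\|\|C\|$ with $b=c_{1}\beta/c_{2}^{2}$. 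Crucially, $b$ depends only on $B$ (through $\beta$) and on the dimension-dependent norm-equivalence constants, which are uniform over all transition matrices by the finiteness of $\Omega$.

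There is no real obstacle here; the lemma is a packaging of standard nonnegative-matrix estimates together with the two structural inputs the authors highlight, namely the column-nonzero property (for (i)) and the strict positivity of $B$ (for (ii)). The only point requiring a moment's attention is confirming that norm-equivalence constants and the bound $\alpha(A)$ can be taken uniformly in the sense the lemma requires, which is immediate because $\Omega$ is finite and every transition matrix has bounded dimensions.
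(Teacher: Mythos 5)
Your proof is correct and is precisely the argument the paper intends: the authors leave the lemma as an exercise and explicitly flag the two structural inputs you use, namely the column-nonzero property of transition matrices (for the lower bound in (i)) and the strict positivity of $B$ together with nonnegativity of entries (for (ii)), with uniformity of the norm-equivalence constants coming from the bounded dimensions guaranteed by finite type. The only blemish is notational: $\alpha(A)$ should be the minimum over all positive entries $A_{i,j}$ of $A$, not a minimum over $j$ alone.
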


\section{Local dimensional behaviour on Loop classes}

\label{S:LoopCl}

From here on, unless we say otherwise $\mu$ will be a self-similar measure
arising from an IFS of finite type, with the notation as in the previous
section.

\subsection{Loop classes and the Essential class}

A non-empty subset $L$ of the set of characteristic vectors $\Omega$ is
called a \textbf{loop class} if whenever $\alpha ,\beta \in L$, then there
is an admissible path $(\gamma _{1},\dots,\gamma _{J})$ of characteristic
vectors $\gamma _{i}\in L$ such that $\alpha =\gamma _{1}$ and $\beta
=\gamma _{J}$.

A loop class $L$ is called an \textbf{essential class} if, in addition,
whenever $\alpha \in L$ and $\beta \in \Omega$ is a child of $\alpha $, then 
$\beta \in L$. It was shown in \cite{FengLy, HHS} that there is always a
unique essential class which we will denote by $E$. A loop class is called 
\textbf{maximal} if it is not properly contained in any other loop class.

Given a set of characteristic vectors $L$ containing a loop class, we will
say that the infinite path $\sigma$ belongs to $L_{e}$, and write $\sigma
\in L_{e}$, if $\sigma =(\sigma _{1},\sigma _{2},\dots)$ with $\sigma
_{j}\in L$ eventually, i.e., there exists an index $j_{0}$ such that $\gamma
_{j}\in L$ for all $j\geq j_{0}$. Of course, every infinite path will belong
to some maximal loop class $L$ eventually. We will say that the (finite or
infinite) path $\sigma$ $\in L_{a}$ if all the letters of $\sigma$ belong to 
$L$. If the path $\sigma$ begins with $\gamma _{0}$, the characteristic
vector of $[0,1]$, we will write $\sigma \in L_{e}^{0}$ or $L_{a}^{0}$,
appropriately.

If $x$ has a symbolic representation $\sigma \in L_{e}^{0}$, we will say
that $x\in K_{L}$. Of course, $x$ can belong to both $K_{L_{1}}$ and $%
K_{L_{2}}$ for different maximal loop classes $L_{1},L_{2}$ only if $x$ is a
boundary point of a net interval with symbolic representations coming from
both $L_{1}$ and $L_{2}$. When $x\in K_{E}$ we will say $x$ is an \textbf{%
essential point}.

\begin{example}
For both the Bernoulli convolution with contraction factor the inverse of
the golden mean and the $(m,d)$-Cantor measures with $m\geq d$, it is the
case that the set of essential points is $(0,1)$. For more details see
Examples \ref{Cantor} and \ref{Pisot}.
\end{example}

\subsection{Local dimensional behaviour of paths}

Motivated by the notion of the local dimension at a point, we introduce a
related notion for paths.

\begin{notation}
Given any infinite path $\sigma$, put 
\begin{equation*}
\underline{d}(\sigma )=\liminf_{n\rightarrow \infty}\frac{\log \left\Vert
T(\sigma |n)\right\Vert}{n\log \rho _{\min}}\text{ and }\overline{d} (\sigma
)=\limsup_{n\rightarrow \infty}\frac{\log \left\Vert T(\sigma |n)\right\Vert%
}{n\log \rho _{\min}}.
\end{equation*}
We write $d(\sigma )$ if $\underline{d}(\sigma )=\overline{d}(\sigma )$.

Let $L$ be any non-empty subset of $\Omega$ containing a loop class and set 
\begin{equation*}
d_{\min}^{L}=\inf_{\sigma \in L_{e}^{0}}\underline{d}(\sigma )\text{ and }
d_{\max}^{L}=\sup_{\sigma \in L_{e}^{0}}\text{ }\overline{d}(\sigma ).
\end{equation*}
\end{notation}

Since $\left\Vert T(\eta ,\sigma )\right\Vert$ and $\left\Vert T(\sigma
)\right\Vert$ are comparable whenever $\eta ,\sigma$ are finite paths, with
constants of comparability depending only on $\eta$ (see Lemma \ref{useful}%
), $\underline{d}(\sigma )=\underline{d}(\sigma ^{\prime})$ where $\sigma
^{\prime}$ omits the initial segment of $\sigma$ that contains the letters
not in $L$. A similar statement holds for $\overline{d}$. Thus 
\begin{equation*}
d_{\min}^{L}=\inf_{\sigma \in L_{a}}\underline{d}(\sigma )\text{ and }
d_{\max}^{L}=\sup_{\sigma \in L_{a}}\overline{d}(\sigma ).
\end{equation*}

\begin{lemma}
We have 
\begin{equation*}
0<d_{\min}^{\Omega}=\min_{L}d_{\min}^{L}\text{ and }d_{\max}^{\Omega
}=\max_{L}d_{\max}^{L}<\infty
\end{equation*}
where the minimum and maximum are over all maximal loop classes $L$.
\end{lemma}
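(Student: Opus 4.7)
The plan is to split the statement into two independent pieces: the extremal identities $d_{\min}^{\Omega}=\min_{L}d_{\min}^{L}$ and $d_{\max}^{\Omega}=\max_{L}d_{\max}^{L}$ over maximal loop classes, and the quantitative bounds $d_{\min}^{\Omega}>0$ and $d_{\max}^{\Omega}<\infty$.

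For the extremal identities, I would use the finiteness of $\Omega $ to argue that every infinite admissible path $\sigma =(\sigma _{0},\sigma _{1},\dots )\in \Omega _{e}^{0}$ must eventually enter and remain in some maximal loop class $L$. Specifically, the set of characteristic vectors visited infinitely often by $\sigma $ is itself a loop class (given any two such vertices, the path returns to each infinitely often and thus contains a path inside this set between them), and is contained in some maximal loop class $L$. Hence there is some $j_{0}$ with $\sigma _{j}\in L$ for all $j\ge j_{0}$. By the remark immediately preceding the statement (which appeals to Lemma \ref{useful}), $\underline{d}(\sigma )=\underline{d}(\sigma ^{\prime })$ for the tail $\sigma ^{\prime }=(\sigma _{j_{0}},\sigma _{j_{0}+1},\dots )\in L_{a}$, so $\underline{d}(\sigma )\ge d_{\min}^{L}\ge \min _{L}d_{\min}^{L}$. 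Taking the infimum over $\sigma $ gives $d_{\min}^{\Omega}\ge \min _{L}d_{\min}^{L}$, while the reverse inequality is immediate from $L_{e}^{0}\subseteq \Omega _{e}^{0}$. The argument for $d_{\max}^{\Omega}$ is symmetric.

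For $d_{\min}^{\Omega}>0$, I would exploit the self-similarity of $\mu $: $\mu (\Delta _{n})=\sum _{\omega \in \Lambda _{n}}p_{\omega }\mu (S_{\omega }^{-1}(\Delta _{n}))$. From $\rho _{\min}^{|\omega |}\le |\rho _{\omega }|\le \rho _{\min}^{n}$ we get $|\omega |\ge n$, so $p_{\omega }\le p_{\max}^{n}$ with $p_{\max}:=\max _{j}p_{j}<1$. The finite type (weak separation) property bounds uniformly the number of $\omega \in \Lambda _{n}$ for which $S_{\omega }([0,1])\cap \Delta _{n}\neq \emptyset $, yielding $\mu (\Delta _{n})\le Bp_{\max}^{n}$ for some uniform $B$. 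Combined with $\Vert T(\gamma _{0},\dots ,\gamma _{n})\Vert \asymp \mu (\Delta _{n})$, this gives $\underline{d}(\sigma )\ge \log p_{\max}/\log \rho _{\min}>0$ for each $\sigma \in \Omega _{e}^{0}$. For $d_{\max}^{\Omega}<\infty $, Lemma \ref{useful}(i) applied iteratively---with $A$ each primitive transition matrix in turn and $C$ the product of the remaining factors---gives $\Vert T(\sigma |n)\Vert \ge c(a_{1}^{\ast })^{n}$, where $a_{1}^{\ast }:=\min _{A}a_{1}(A)>0$ is taken over the finitely many primitive transition matrices $A$. Hence $\overline{d}(\sigma )\le \log a_{1}^{\ast }/\log \rho _{\min}<\infty $.

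The main obstacle is the bound $\mu (\Delta _{n})\le Bp_{\max}^{n}$; specifically, bounding the number of contributing $\omega \in \Lambda _{n}$ uniformly in $n$ and $\Delta_n$ requires invoking the weak separation property implied by finite type. An incidental technical point is that the comparability $\Vert T(\cdot )\Vert \asymp \mu (\Delta _{n})$ is stated in the paper only for paths starting at $\gamma _{0}$, so general starting characteristic vectors should be handled either by a prepending argument (concatenating with a fixed path from $\gamma_0$ and absorbing the prefix into a constant via Lemma \ref{useful}) or by a direct norm estimate.
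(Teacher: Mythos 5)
Your treatment of the two extremal identities and of $d_{\max }^{\Omega }<\infty $ is sound. The identities are argued exactly as the paper (tersely) intends: every infinite path is eventually confined to a unique maximal loop class, and $\underline{d},\overline{d}$ are tail quantities by Lemma \ref{useful}. For the finiteness of $d_{\max }^{\Omega }$ you iterate Lemma \ref{useful}~\ref{useful i} to get $\Vert T(\sigma |n)\Vert \geq c(a_{1}^{\ast })^{n}$, whereas the paper invokes Lemma \ref{incl}~\ref{incl i} to produce a cylinder $S_{\nu }[0,1]\subseteq \Delta _{n}$ with $\nu \in \Lambda _{n+N}$ and hence $\mu (\Delta _{n})\geq (\min p_{j})^{s(n+N)}$. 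Both routes are correct; yours is a legitimate alternative that works directly at the level of transition matrices.

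The argument for $d_{\min }^{\Omega }>0$ has a genuine gap: the intermediate bound $\mu (\Delta _{n})\leq Bp_{\max }^{n}$ is false in general. Finite type (and the weak separation property) bounds the number of \emph{distinct maps} $S_{\omega }$, $\omega \in \Lambda _{n}$, whose images meet $\Delta _{n}$ --- this is what makes the neighbour sets finite --- but it does \emph{not} bound the number of \emph{words} $\omega $ realizing those maps, and it is precisely this word multiplicity that makes overlapping IFS nontrivial. For the uniform Bernoulli convolution with $\varrho $ the inverse of the golden mean, one has $p_{\max }^{n}=2^{-n}=\varrho ^{n\log 2/\log (1/\varrho )}\approx \varrho ^{1.44n}$, while typical net intervals satisfy $\mu (\Delta _{n})\approx \varrho ^{\alpha n}$ with $\alpha $ close to $1$: the number of contributing words grows like $(2\varrho )^{n}$, so $\sum_{\omega }p_{\omega }$ over contributing $\omega $ is not $O(p_{\max }^{n})$. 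The paper's proof avoids this entirely by a different mechanism: Lemma \ref{incl}~\ref{incl ii} produces, every $N$ levels, for each contributing cylinder a subcylinder that exits the net interval, so each row sum of the transition matrix over a block of $N$ levels is at most $1-\varepsilon $ with $\varepsilon =\min \{p_{\omega }:\omega \in \mathcal{W}\}$; submultiplicativity of the row-sum norm then yields $\Vert T(\sigma |n)\Vert \leq C^{\prime }(1-\varepsilon )^{[(n-1)/N]}$ and hence $\underline{d}(\sigma )\geq \log (1-\varepsilon )/(N\log \rho _{\min })>0$. You need an argument of this flavour --- a definite proportional loss of mass per bounded number of levels --- rather than a per-word probability estimate.
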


\begin{proof}
Lemma \ref{incl} \ref{incl ii} implies that there is an index $N$ and a
finite set of words $\mathcal{W}$ such that if $\nu$ is associated with an
element of the neighbour set of $\Delta _{n}(x)$, then there is some $\omega
\in \mathcal{W} $ such that $\nu \omega$ is not associated with any element
of the neighbour set of $\Delta _{n+N}(x)$. Thus, if $\sigma =(\gamma
_{1},\dots,\gamma _{N+1})$ is any path of length $N+1$, then the sum of each
row of the transition matrix $T(\sigma )$ is at most $1-\varepsilon$, where $%
\varepsilon =\min \{p_{\omega}: \omega \in \mathcal{W}\}$. (Think of $\gamma
_{1}$ as the characteristic vector of some $\Delta _{n}(x)$ and $\gamma
_{N+1}$ as the characteristic vector of its descendent $\Delta _{n+N}(x)$).

If we let $\left\Vert T\right\Vert _{r}=\max_{i}\sum_{j}\left\vert
T_{ij}\right\vert $ be the maximum row sum norm, then one can easily verify
that $\left\Vert T_{1}T_{2}\right\Vert _{r}\leq \left\Vert T_{1}\right\Vert
_{r}\left\Vert T_{2}\right\Vert _{r}$. Furthermore, $\left\Vert T\right\Vert
\leq C\left\Vert T\right\Vert _{r}$ where $C$ is a bound on the number of
rows of matrix $T$. If $\sigma =(\gamma _{1},\gamma _{2},\dots )$ is any
infinite path, then we can factor $T(\sigma |n)$ as $T(\eta _{1})\cdot \cdot
\cdot T(\eta _{J})T(\lambda )$ where $\eta _{i}=(\gamma _{(i-1)N+1},\dots
,\gamma _{iN+1})$ are paths of length $N+1$, $J=\left[ \frac{n-1}{N}\right] $
and $\lambda =(\gamma _{JN+1},\dots ,\gamma _{n})$ is a path of length $\leq
N$. As each $\left\Vert T(\eta _{i})\right\Vert _{r}\leq 1-\varepsilon $,
and there are only finitely many paths of length at most $N$, the
submultiplicativity of the $r$-norm implies 
\begin{equation*}
\left\Vert T(\sigma |n)\right\Vert \leq C\left\Vert T(\sigma |n)\right\Vert
_{r}\leq C(1-\varepsilon )^{J}\max_{\left\vert \lambda \right\vert \leq
N}\left\Vert T(\lambda )\right\Vert \leq C^{\prime }(1-\varepsilon )^{\left[ 
\frac{n-1}{N}\right] }
\end{equation*}%
for a suitable constant $C^{\prime }$. Hence 
\begin{equation}
\underline{d}(\sigma )=\liminf_{n\rightarrow \infty }\frac{\log \left\Vert
T(\sigma |n)\right\Vert }{n\log \rho _{\min }}\geq \frac{\log (1-\varepsilon
)}{N\log \rho _{\min }\text{ }}>0\text{ for all }\sigma .  \label{lower}
\end{equation}

On the other hand, if $N$ is the integer of Lemma \ref{incl} \ref{incl i},
then any $\Delta \in \mathcal{F}_{n}$ contains $S_{\nu}[0,1]$ for some $\nu
\in \Lambda _{n+N}$. Hence 
\begin{equation}
\mu (\Delta )\geq (\min p_{j})^{s(n+N)},  \label{munet}
\end{equation}
where $s$ is chosen such that any word in $\Lambda _{k}$ is of length at
most $sk$.

Equivalently, there is a constant $C>0$ such that $\left\Vert T(\sigma
|n)\right\Vert \geq C(\min p_{j})^{s(n+N)}$ for all infinite paths $\sigma$.
Thus 
\begin{equation}
\overline{d}(\sigma )=\limsup_{n\rightarrow \infty}\frac{\log \left\Vert
T(\sigma |n)\right\Vert}{n\log \rho _{\min}}\leq \frac{s\log (\min p_{j})} {%
\log \rho _{\min}}<\infty .  \label{upper}
\end{equation}
The bounds (\ref{lower}) and (\ref{upper}) obviously imply $d_{\min}^{L}$
and $d_{\max}^{L}$ are bounded above and below from $0$. Since every
infinite path $\sigma$ belongs to $L$ eventually for a unique choice of
maximal loop class L the proof is complete.
\end{proof}

\subsection{Relationships between $d_{\min}^{L}, d_{\max}^{L}$ and local
dimensions}

If $x\in K_{L}$ has symbolic representation $\sigma \in L_{e}^{0}$ and $%
\Delta _{n}(x)$ has symbolic representation $\sigma |n$, then the
comparability of $\mu (\Delta _{n})$ and $\left\Vert T(\sigma |n)\right\Vert 
$ when $\sigma |n$ is the symbolic representation of $\Delta _{n}$, together
with (\ref{locdimft}), shows 
\begin{equation}
\underline{d}(\sigma )=\liminf_{n\rightarrow \infty}\frac{\log \mu (\Delta
_{n}(x))}{n\log \rho _{\min}}\geq \underline{\dim}_{\mathrm{loc}}\mu (x).
\label{dmin-locdim}
\end{equation}
Similarly, 
\begin{equation*}
\overline{\dim}_{\mathrm{loc}}\mu (x)\leq \overline{d}(\sigma ).
\end{equation*}
In particular, if $L$ contains a loop class, then 
\begin{equation}
\overline{\dim}_{\mathrm{loc}}\mu (x)\leq d_{\max}^{L}\text{ for all }x\in
K_{L}.  \label{dmax-locdim}
\end{equation}

\begin{definition}
We will say that an infinite path $\sigma$ is a \textbf{periodic path with
period }$\theta$ if $\sigma =(\eta ,\theta ^{-},\theta ^{-},\dots)$ for some
initial finite path $\eta$ and \textbf{cycle} $\theta =(\theta
_{1},\dots,\theta _{k},\theta _{1})$. We call $x$ a \textbf{periodic point}
if it has a periodic symbolic representation.
\end{definition}

An example of a periodic point is the boundary point of a net interval.

\begin{notation}
Denote by $\sp(M)$ the spectral radius of the square matrix $M$.
\end{notation}

\begin{example}
\label{Ex:periodic}If $\sigma$ is a periodic path with period $\theta $,
then 
\begin{equation}
d(\sigma )=\underline{d}(\sigma )=\overline{d}(\sigma )=\lim_{k\rightarrow
\infty}\frac{\log \left\Vert (T(\theta ))^{k}\right\Vert}{\left\vert \theta
^{-}\right\vert k\log \rho _{\min}}=\frac{\log \sp(T(\theta ))} {\left\vert
\theta ^{-}\right\vert \log \rho _{\min}}.  \label{d-cycle}
\end{equation}

Similarly, if $x$ is a periodic point with a unique periodic symbolic
representation $\sigma $, then $\dim _{\mathrm{loc}}\mu (x)=d(\sigma )$. If $%
x$ has two different symbolic representations, $\sigma ,\tau $, then these
are both necessarily periodic and 
\begin{equation*}
\dim _{\mathrm{loc}}\mu (x)=\min (d(\sigma ),d(\tau )).
\end{equation*}
See \cite[Prop. 2.7]{HHN} for details. In Example \ref{ExSp1} we show that
it is possible to have $d(\sigma )\neq d(\tau )$.
\end{example}

An equicontractive self-similar measure is said to be \textbf{regular} if $%
p_{0}=p_{m}=\min p_{j}$ where the $S_{j}$ are ordered so that $%
d_{0}<d_{1}<\cdot \cdot \cdot <d_{m}$. It was shown in \cite[Thm. 3.2]%
{FengLimited} (see also \cite[Cor. 3.7]{HHM}) that if $\mu$ is a regular,
finite type measure, then the $\mu $-measures of adjacent net intervals are
comparable and consequently (\ref{locdimft}) implies 
\begin{equation}
\underline{\dim}_{\mathrm{loc}}\mu (x)=\liminf_{n\rightarrow \infty}\frac{%
\log (\mu (\Delta _{n}(x))}{n\log \rho _{\min}}=\liminf_{n\rightarrow \infty}%
\frac{ \log \left\Vert T(\sigma |n)\right\Vert}{n\log \rho _{\min}}=%
\underline{d} (\sigma )  \label{locdimreg}
\end{equation}
when $x$ has symbolic representation $\sigma $, and similarly for the
(upper) local dimension. Consequently, under the regularity assumption 
\begin{equation}
d_{\min}^{L}=\inf \{\underline{\dim}_{\mathrm{loc}}\mu (x):x\in K_{L}\},%
\text{ }  \label{R1}
\end{equation}
and similarly for $d_{\max}^{L}$ and the upper local dimensions. But without
this assumption, these statements need not be true. Here is one example.

\begin{example}
\label{ExSp1}Consider the IFS with contractions $S_{j}(x)=x/3+d_{j}$ for $%
d_{j}=0,1/9,1/3,1/2,2/3$ and probabilities $p_{j}=4/17$ for $j=0,1,3,4$ and $%
p_{2}=1/17$. This IFS is of finite type and has 19 characteristic vectors.

In particular, $L=\{\gamma _{4}\}$ is a singleton maximal loop class with $%
K_{L}=\{1/2\}$. In this case, the only infinite word in $L_{e}^{0}$ is $%
\sigma :=(\gamma _{0},\gamma _{4},\gamma _{4},\gamma _{4},\dots )$. It can
be checked that 
\begin{equation*}
d(\sigma )=\frac{\log \sp(T(\gamma _{4},\gamma _{4}))}{\log 3}=\frac{\log 17%
}{\log 3}\text{.}
\end{equation*}%
But $1/2$ is a boundary point of a net interval and has a second symbolic
representation, $\tau =(\gamma _{0},\gamma _{5},\gamma _{12},\gamma
_{12},\gamma _{12}\dots )$, with $\tau \notin L_{e}^{0}$. As $d(\tau )=\log
(17/4)/\log 3$, we have 
\begin{equation*}
\dim _{\mathrm{loc}}\mu (1/2)=\min (d(\tau ),d(\sigma ))=d(\tau ),
\end{equation*}%
so 
\begin{equation*}
\inf_{x\in K_{L}}\{\underline{\dim }_{\mathrm{loc}}\mu (x)\}=\sup_{x\in
K_{L}}\{\overline{\dim }_{\mathrm{loc}}\mu (x)\}=\dim _{\mathrm{loc}}\mu
(1/2)<d(\sigma )=d_{\min }^{L}=d_{\max }^{L}.
\end{equation*}%
We refer the reader to Example \ref{ExSpecial} for more details.
\end{example}

More can be said about the relationship between $d_{\min}^{L}, d_{\max }^{L}$
and local dimensions, but first it is useful to establish that the
convergence to the limiting local behaviour is `uniform' over $\sigma \in
L_{a}$.

\begin{lemma}
\label{dmin}Let $L$ be any set of characteristic vectors containing a loop
class. For each $\varepsilon >0$ there is an integer $k_{0}$ such that if $%
\sigma \in L_{a}$ and $\left\vert \sigma \right\vert \geq k\geq k_{0}$, then 
\begin{equation*}
\frac{\log \left\Vert T(\sigma |k)\right\Vert}{k\log \rho _{\min}}\geq
d_{\min}^{L}-\varepsilon ,
\end{equation*}
equivalently, 
\begin{equation*}
\sup_{\sigma \in L_{a}}\left\Vert T(\sigma |k)\right\Vert ^{1/k}\leq \rho
_{\min}^{d_{\min}^{L}-\varepsilon}.
\end{equation*}
\end{lemma}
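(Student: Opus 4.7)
The plan is to reformulate the conclusion via $M(k) := \sup\{\|T(\eta)\| : \eta \in L_a, |\eta| = k\}$, so that the claim becomes $M(k)^{1/k} \leq \rho_{\min}^{d_{\min}^L - \varepsilon}$ for $k \geq k_0(\varepsilon)$. Any $\eta \in L_a$ of length $k+m$ splits at its $(k{+}1)$st vertex into subpaths $\eta_1,\eta_2 \in L_a$ of lengths $k,m$, and submultiplicativity of the operator norm gives $\|T(\eta)\| = \|T(\eta_1)T(\eta_2)\| \leq M(k)M(m)$. Hence $\log M$ is subadditive and Fekete's lemma yields $M(k)^{1/k} \to M_\infty := \inf_k M(k)^{1/k}$ with $M(k)^{1/k} \geq M_\infty$ for every $k$. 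Since $\rho_{\min}^{d_{\min}^L - \varepsilon} > \rho_{\min}^{d_{\min}^L}$, the claim thus reduces to showing $M_\infty \leq \rho_{\min}^{d_{\min}^L}$.

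The inequality $M_\infty \geq \rho_{\min}^{d_{\min}^L}$ is straightforward: for any infinite $\sigma \in L_a$, $\|T(\sigma|k)\| \leq M(k)$ together with $\log \rho_{\min} < 0$ forces $\underline{d}(\sigma) \geq \log M_\infty / \log \rho_{\min}$, and taking the infimum gives $d_{\min}^L \geq \log M_\infty / \log \rho_{\min}$. The reverse inequality is the crux, and I would obtain it by realizing the Fekete limit along a periodic infinite path. For each large $k$, pick $\eta_k \in L_a$ attaining $M(k)$; since $L$ is finite, a subsequence shares the same start $\gamma^s$ and end $\gamma^e$, and for $\eta_k$ sufficiently long these must lie in a loop class $L_0 \subseteq L$ (paths avoiding all loop classes have bounded length). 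Fix a connecting path $\pi \subseteq L_0$ from $\gamma^e$ to $\gamma^s$, so that $\theta_k := \eta_k \pi$ is a cycle. Its periodic extension (prepended by a fixed path from $\gamma_0$ to $\gamma^s$) gives $\sigma_k \in L_e^0$ with $d(\sigma_k) = \log \sp(T(\theta_k))/(|\theta_k^-| \log \rho_{\min})$ by Example~\ref{Ex:periodic}. The desired bound then follows if $\sp(T(\theta_k)) \gtrsim M(k)$ with a $k$-independent constant: applying Lemma~\ref{useful}\ref{useful ii} iteratively to $T(\theta_k)^m = (T(\eta_k)T(\pi))^m$ with $T(\pi)$ arranged to be strictly positive gives $\|T(\theta_k)^m\| \geq c^m M(k)^m$, and Gelfand's formula delivers $\sp(T(\theta_k)) \geq c\,M(k)$. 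Since $|\theta_k^-| = k + O(1)$, this forces $d(\sigma_k) \to \log M_\infty / \log \rho_{\min}$, whence $d_{\min}^L \leq \log M_\infty / \log \rho_{\min}$, i.e.\ $M_\infty \leq \rho_{\min}^{d_{\min}^L}$.

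The main obstacle is producing the strictly positive connecting matrix $T(\pi)$ with $\pi \subseteq L_0$ required by Lemma~\ref{useful}\ref{useful ii}. When the non-negative matrix encoding transitions within $L_0$ is primitive, a sufficiently high power of any cycle through $\gamma^e$ and $\gamma^s$ suffices; in the general irreducible case one must decompose by period using Perron--Frobenius and handle cyclic classes uniformly, enlarging $\pi$ as needed. Once $M_\infty = \rho_{\min}^{d_{\min}^L}$ is established, Fekete convergence supplies $k_0(\varepsilon)$ with $M(k)^{1/k} \leq \rho_{\min}^{d_{\min}^L - \varepsilon}$ for $k \geq k_0$, completing the proof.
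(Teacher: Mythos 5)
Your reduction via Fekete's lemma is sound (modulo a harmless off-by-one in path lengths): with $M(k)=\max\{\Vert T(\eta)\Vert:\eta\in L_{a},\ \vert\eta\vert=k\}$, submultiplicativity gives $M(a+b-1)\le M(a)M(b)$, so $M(k)^{1/k}$ converges to its infimum $M_{\infty}$, the easy inequality $M_{\infty}\ge\rho_{\min}^{d_{\min}^{L}}$ is correct, and the lemma is indeed equivalent to $M_{\infty}\le\rho_{\min}^{d_{\min}^{L}}$. The proof of this last inequality, however, has two genuine gaps. First, the endpoints $\gamma^{s},\gamma^{e}$ of a long norm-maximizing path $\eta_{k}\in L_{a}$ need not lie in a common loop class: a long path in $L_{a}$ must pass through a loop class, but it may begin or end at transient vertices, or begin in one maximal loop class and end in another, in which case no return path $\pi$ from $\gamma^{e}$ to $\gamma^{s}$ exists and $\eta_{k}$ cannot be closed into a cycle. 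Second, and more seriously, even when a closing path exists, the bound $\sp(T(\eta_{k}\pi))\ge c\,\Vert T(\eta_{k})\Vert$ genuinely requires a strictly positive $T(\pi)$ as in Lemma \ref{useful} \ref{useful ii}: for a non-negative square matrix whose columns merely contain a positive entry, the spectral radius can be arbitrarily small compared with the norm, e.g.\ $\left(\begin{smallmatrix}\epsilon & M\\ 0 & \epsilon\end{smallmatrix}\right)$. Such positivity is not a consequence of the finite type structure; it is precisely the extra hypothesis imposed in Theorem \ref{main}, and Example \ref{ExSpecial} exhibits a loop class $\{\gamma_{8}\}$ whose cycle matrix is lower triangular, so that no power of it is positive. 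Your suggested repair via primitivity and Perron--Frobenius acts on the wrong object: primitivity of the $0$--$1$ incidence matrix of $L_{0}$ controls which products are nonzero, not whether any individual product $T(\pi)$ has all entries positive (and positivity of a sum over paths does not give positivity of any single path's matrix).

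The paper's proof avoids cycles entirely. Fixing $s<d_{\min}^{L}$, it considers the finite paths in $L_{a}$ whose normalized norm first drops below the threshold $\rho_{\min}^{s}$ at their final letter, and shows by a K\"onig's lemma/diagonalization argument that there are only finitely many such paths --- otherwise one could assemble an infinite path in $L_{a}$ all of whose prefixes stay above the threshold, forcing $\underline{d}(\sigma)\le s<d_{\min}^{L}$, a contradiction. Every path then factors into boundedly many such blocks plus a short tail, and submultiplicativity finishes the argument; nothing beyond finiteness of the alphabet is used, which is why the lemma needs no positivity hypothesis. Note that the periodic-path device you propose is essentially the paper's proof of the companion Lemma \ref{dmax}; there, however, only the upper bound $\sp(T(\theta_{k}))\le\Vert T(\sigma|k)\Vert\,\Vert T(\beta_{k})\Vert$ is needed, which submultiplicativity gives for free, whereas on the $d_{\min}$ side the required inequality between spectral radius and norm points the other way.
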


\begin{proof}
Fix $s<d_{\min}^{L}$ and let $\mathcal{T}$ be the following set of
transition matrices: 
\begin{equation*}
\mathcal{T}=\{T(\sigma ):\sigma =(\sigma _{1},\dots,\sigma _{n})\in
L_{a},\left\Vert T(\sigma _{1},\dots,\sigma _{j})\right\Vert ^{1/j}>\rho
_{\min}^{s}\text{ for }j=1,\dots,n-1
\end{equation*}
\begin{equation*}
\text{ and }\left\Vert T(\sigma _{1},\dots,\sigma _{n})\right\Vert
^{1/n}\leq \rho _{\min}^{s}\}.
\end{equation*}

If $\mathcal{T}$ is an infinite set, as there are only finitely many
characteristic vectors, there must be infinitely many $T(\sigma )\in 
\mathcal{T}$ with all $\sigma$ having the same first letter, say $\sigma
_{1} $. Among these infinitely many $T(\sigma )$, there must be infinitely
many $\sigma$ all having the same second letter as well, say $\sigma _{2}$.
Repeating this process, we create an infinite path $\sigma =(\sigma
_{1},\sigma _{2},\dots)\in L_{a}$ with $\left\Vert T(\sigma
_{1},\dots,\sigma _{k})\right\Vert ^{1/k}>\rho _{\min}^{s}$ for every $k$.
But then $\underline{d}(\sigma )\leq s<d_{\min}^{L}$ and that is a
contradiction. Consequently, $\mathcal{T}$ is finite.

Let $N$ be the maximal length of any $\alpha$ with $T(\alpha )\in \mathcal{T}
$. Given any finite path $\sigma \in L_{a}$, we can factor $T(\sigma )$ as a
product $\left(\prod\limits_{j=1}^{J}T(\eta _{j})\right)T(\alpha )$ where $%
T(\eta _{j})\in \mathcal{T}$ and $\left\vert \alpha \right\vert \leq N$.
There are only finitely many possible choices for $T(\alpha )$, hence there
is a constant $C$, independent of $\sigma$, such that 
\begin{equation*}
\left\Vert T(\sigma )\right\Vert \leq C\prod_{j=1}^{J}\left\Vert T(\eta
_{j})\right\Vert .
\end{equation*}
As $\left\Vert T(\eta _{j})\right\Vert \leq \rho _{\min}^{s\left\vert \eta
_{j}\right\vert}$ and $\left\vert \alpha \right\vert \leq N$, taking $%
C_{1}=C\rho _{\min}^{-sN}$ gives 
\begin{equation*}
\left\Vert T(\sigma )\right\Vert \leq C\rho _{\min}^{s\sum \left\vert \eta
_{j}\right\vert}=C\rho _{\min}^{s(\left\vert \sigma \right\vert -\left\vert
\alpha \right\vert )}\leq C_{1}\rho _{\min}^{s\left\vert \sigma \right\vert}.
\end{equation*}
Hence for any $\varepsilon >0$, 
\begin{equation*}
\left\Vert T(\sigma |k)\right\Vert \leq C_{1}\rho _{\min}^{sk}\leq \rho
_{\min}^{k(s-\varepsilon )}
\end{equation*}
for sufficiently large $k$. As $s<d_{\min}^{L}$ and $\varepsilon >0$ are
arbitrary choices, this completes the proof.
\end{proof}

Next, we see that a similar result holds for $d_{\max}^{L}$ if we assume $L$
is a loop class.

\begin{lemma}
\label{dmax}Suppose $L$ is a loop class. For each $\varepsilon >0$ there is
an integer $k_{0}$ such that if $\sigma \in L_{a}$ and $\left\vert \sigma
\right\vert \geq k\geq k_{0}$, then 
\begin{equation*}
\frac{\log \left\Vert T(\sigma |k)\right\Vert}{k\log \rho _{\min}}\leq
d_{\max}^{L}+\varepsilon ,
\end{equation*}
equivalently, 
\begin{equation*}
\sup_{\sigma \in L_{a}}\left\Vert T(\sigma |k)\right\Vert ^{1/k}\geq \rho
_{\min}^{(d_{\max}^{L}+\varepsilon )}\text{.}
\end{equation*}
\end{lemma}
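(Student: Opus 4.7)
This lemma is the counterpart of Lemma~\ref{dmin} with the inequality reversed: one wants a uniform \emph{lower} bound on $\|T(\sigma|k)\|^{1/k}$. The difficulty is that submultiplicativity of the operator norm gives only \emph{upper} bounds on $\|T(\sigma|k)\|$, so the explicit ``first-hitting'' decomposition used in the proof of Lemma~\ref{dmin} does not transfer. My plan is instead to use the loop class structure of $L$ to close each finite path into a periodic cycle, and then obtain the required bound via the spectral-radius formula of Example~\ref{Ex:periodic}.

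Fix $\sigma \in L_a$ with $|\sigma|\geq k\geq k_0$ and work with the prefix $(\sigma_1,\dots,\sigma_k)$. Since $L$ is a finite loop class, the induced directed graph on $L$ is strongly connected, so there is an admissible path $\pi=(\sigma_k,\pi_1,\dots,\pi_{\mu-1},\sigma_1)$ in $L$ of length $\mu\leq N$ for some constant $N=N(L)$. Concatenating yields a cycle $\theta=(\sigma_1,\dots,\sigma_k,\pi_1,\dots,\pi_{\mu-1},\sigma_1)$ with $|\theta^-|=k+\mu-1\leq k+N$; the periodic path $\theta^\infty$ lies in $L_a$, so $d(\theta^\infty)\leq d_{\max}^L$ by the definition of the supremum. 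Example~\ref{Ex:periodic} then gives $\log\sp(T(\theta))/(|\theta^-|\log\rho_{\min})\leq d_{\max}^L$, which (using $\log\rho_{\min}<0$) rearranges to
\[
\sp(T(\theta)) \;\geq\; \rho_{\min}^{|\theta^-|\, d_{\max}^L}.
\]

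Factor $T(\theta)=T(\sigma|k)\cdot M$, where $M$ is the transition matrix of the return path $\pi$. The standard inequalities $\sp(T(\theta))\leq\|T(\theta)\|\leq\|T(\sigma|k)\|\cdot\|M\|$ combine to give $\|T(\sigma|k)\|\geq \sp(T(\theta))/\|M\|$. Since $\mu\leq N$ and each primitive transition matrix has norm bounded by an absolute constant (row sums are at most $1$), $\|M\|\leq C$ for some $C=C(L)$, and therefore
\[
\|T(\sigma|k)\|^{1/k}\;\geq\;c^{1/k}\,\rho_{\min}^{d_{\max}^L},\qquad c:=C^{-1}\rho_{\min}^{N\, d_{\max}^L}>0.
\]
Since $c$ depends only on $L$ and $c^{1/k}\to 1$ as $k\to\infty$, there is $k_0=k_0(L,\varepsilon)$ such that $c^{1/k}\geq\rho_{\min}^\varepsilon$ for all $k\geq k_0$, and this yields the stated lower bound $\|T(\sigma|k)\|^{1/k}\geq\rho_{\min}^{d_{\max}^L+\varepsilon}$, uniformly in $\sigma$.

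The main obstacle is the asymmetry noted at the start: submultiplicativity of the norm makes Lemma~\ref{dmin} essentially mechanical, whereas the lower bound here must be routed through the spectral radius of a \emph{closed} cycle. This is also where the hypothesis that $L$ is a loop class (not merely contains one) enters twice over --- to produce a uniformly bounded return path $\pi$, and to ensure that the periodic extension $\theta^\infty$ lies in $L_a$, so that $d(\theta^\infty)\leq d_{\max}^L$.
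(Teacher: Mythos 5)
Your proof is correct and follows essentially the same route as the paper's: both arguments close the prefix $\sigma|k$ into a cycle $\theta$ via a return path inside $L$ of uniformly bounded length, use the periodic-path formula of Example \ref{Ex:periodic} together with $d(\theta^{\infty})\leq d_{\max}^{L}$ to get $\sp(T(\theta))\geq \rho_{\min}^{|\theta^-|\,d_{\max}^{L}}$, and transfer this to $\|T(\sigma|k)\|$ via $\sp(T(\theta))\leq \|T(\theta)\|\leq \|T(\sigma|k)\|\,\|T(\pi)\|$. The only differences are cosmetic: the paper runs the spectral-radius step through $\lim_{n}\|T(\theta)^{n}\|^{1/n}$ rather than invoking $\sp\leq\|\cdot\|$ directly, and both write-ups contain the same harmless off-by-one indexing choices in the factorization of $T(\theta)$.
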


\begin{proof}
The proof is quite different. We will use the fact that as $L$ is a loop
class, there is a finite set of paths $\mathcal{S}$ in $L_{a}$ with the
property that given any finite path $\sigma \in L_{a}$, there is some path $%
\beta \in \mathcal{S}$ so that the path $(\sigma ^{-},\beta )$ is a cycle.
Let $C=\max_{\beta \in \mathcal{S}}\left\Vert T(\beta )\right\Vert $.

Given $\sigma \in L_{a}$, put $\sigma _{k}=(\sigma |k)^{-}$. Pick $\beta
_{k}\in \mathcal{S}$ so that $(\sigma _{k},\beta _{k})=\theta _{k}$ is a
cycle. For all positive integers $n$ we have 
\begin{equation*}
\left\Vert (T(\sigma _{k},\beta _{k}))^{n}\right\Vert ^{1/n}\leq \left\Vert
T(\sigma _{k},\beta _{k})\right\Vert \leq \left\Vert T(\sigma |k)\right\Vert
\left\Vert T(\beta _{k})\right\Vert \leq C\left\Vert T(\sigma |k)\right\Vert
\end{equation*}
and hence 
\begin{equation*}
\frac{\log \left\Vert T(\sigma |k)\right\Vert}{k\log \rho _{\min}}\leq \frac{%
\log 1/C+\frac{1}{n}\log \left\Vert (T(\sigma _{k},\beta
_{k}))^{n}\right\Vert}{k\log \rho _{\min}}\text{.}
\end{equation*}
Let $\omega _{k}\in L_{e}^{0}$ be an infinite periodic path with period $%
\theta _{k}$. Then 
\begin{equation*}
d_{\max}^{L}\geq d(\omega _{k})=\frac{\log \sp(T(\theta _{k}))} {%
(k+\left\vert \beta _{k}\right\vert )\log \rho _{\min}}=\lim_{n}\frac{\log
\left\Vert (T(\sigma _{k},\beta _{k}))^{n}\right\Vert}{n(k+\left\vert \beta
_{k}\right\vert )\log \rho _{\min}}.
\end{equation*}
Since $\max_{\beta \in \mathcal{S}}|\beta |<\infty$, given $\varepsilon >0$
there exists $k_{0}=k_{0}(\varepsilon )$ such that for all $k\geq k_{0}$, 
\begin{eqnarray*}
\frac{\log \left\Vert T(\sigma |k)\right\Vert}{k\log \rho _{\min}} &\leq & 
\frac{\log 1/C+\log \sp(T(\theta _{k}))}{k\log \rho _{\min}} \\
&\leq &\frac{\log 1/C}{k\log \rho _{\min}}+d_{\max}^{L}\left( \frac{k
+\left\vert \beta _{k}\right\vert}{k}\right) \leq d_{\max }^{L}+\varepsilon .
\end{eqnarray*}
\end{proof}

We remind the reader that in (\ref{dmin-locdim}) we observed that $%
\underline{\dim}_\mathrm{loc} \mu(x)(x) \leq \underline{d}(\sigma)$ whenever 
$\sigma$ is a symbolic representation of $x$ and thus $\inf_{x\in K_{L}}\{%
\underline{\dim} _{\mathrm{loc}}\mu (x)\}\leq d_{\min}^{L}$. More can be
said, particularly when $K_{L}$ is relatively open.

\begin{proposition}
\label{dminlocdim}Suppose $L$ is a set of vectors containing a loop class.
If $x$ belongs to the relative interior of $K_{L}$, then $\underline{\dim}_%
\mathrm{loc} \mu (x)\geq d_{\min}^{L}$. Moreover, if $K_{L}$ is open (in the
relative topology on $K$), then 
\begin{equation*}
d_{\min}^{L}=\inf \{\underline{\dim}_{\mathrm{loc}}\mu (x):x\in K_{L}\}.
\end{equation*}
\end{proposition}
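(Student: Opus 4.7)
The plan is to reduce, via (\ref{locdimft}), the desired inequality $\underline{\dim}_{\mathrm{loc}} \mu(x) \geq d_{\min}^L$ to an upper bound of the form
\[
\mu(\Delta_n(x)) + \mu(\Delta_n^{+}(x)) + \mu(\Delta_n^{-}(x)) \leq C \rho_{\min}^{n(d_{\min}^L - \varepsilon)}
\]
for $n$ large and arbitrary $\varepsilon > 0$. I would bound each of the three summands separately, using the comparability of $\mu(\Delta)$ with the transition matrix norm together with a factoring argument at an index after which the relevant path sits entirely inside $L$.

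For $\mu(\Delta_n(x))$ the argument is immediate. I pick a symbolic representation $\sigma \in L_e^0$ of $x$ and choose $n_0$ such that $\sigma_j \in L$ for all $j \geq n_0$. Writing $T(\sigma|n) = T(\sigma|_{n_0}) \cdot T(\sigma_{n_0}, \ldots, \sigma_n)$, Lemma \ref{useful}(i) bounds the left factor by an $n$-independent constant, while the right factor is a transition matrix along an $L_a$-path, so Lemma \ref{dmin} yields $\|T(\sigma_{n_0}, \ldots, \sigma_n)\| \leq \rho_{\min}^{(n - n_0)(d_{\min}^L - \varepsilon)}$ for $n$ large.

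For $\mu(\Delta_n^{\pm}(x))$ the same strategy works once I know that the symbolic representation $\tau^\pm$ of $\Delta_n^{\pm}(x)$ agrees with an $L_a$-path from some $n$-independent index on. Since $x$ lies in the relative interior of $K_L$, I fix $r > 0$ with $B(x, r) \cap K \subseteq K_L$; then for $n$ large every ancestor of $\Delta_n^{\pm}(x)$ of level $\geq j_0(r)$ is contained in $B(x, r)$ and so has its $K$-intersection in $K_L$. A compactness argument on the finite transition graph --- applied to the decreasing sequence of open sets $\{\sigma : \sigma_j \notin L \text{ for some } j \geq N\}$ in the compact space of infinite descents from a characteristic vector $\beta$ --- then produces a universal $N$ such that every descent from $\beta$ has $\sigma_j \in L$ for all $j \geq N$, provided every such descent lies in $L_e$. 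This forces $\tau^\pm_j \in L$ for $j \in [j_0 + N, n]$, so the factoring argument of the previous paragraph applies.

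Part 2 follows at once: if $K_L$ is relatively open, every $x \in K_L$ lies in its relative interior, so Part 1 gives $\inf_{x \in K_L} \underline{\dim}_{\mathrm{loc}} \mu(x) \geq d_{\min}^L$; conversely, for any $\varepsilon > 0$ I choose $\sigma \in L_e^0$ with $\underline{d}(\sigma) < d_{\min}^L + \varepsilon$ and let $x$ be the point it represents, whence (\ref{dmin-locdim}) gives $\underline{\dim}_{\mathrm{loc}} \mu(x) \leq \underline{d}(\sigma) < d_{\min}^L + \varepsilon$. The hard part will be verifying the premise of the compactness argument above: a net interval $A$ with $A \cap K \subseteq K_L$ may have boundary points whose symbolic representation through $A$ fails to lie in $L_e^0$ (the $L_e^0$ representation instead passing through the adjacent net interval), so not every infinite descent from the characteristic vector of $A$ is automatically in $L_e$. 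The \emph{relative interior} hypothesis, rather than mere membership in $K_L$, is exactly what is needed to control these boundary contributions.
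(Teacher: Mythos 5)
Your strategy is the paper's own: reduce via (\ref{locdimft}) to an upper bound on $\mu (\Delta _{n}(x))+\mu (\Delta _{n}^{+}(x))+\mu (\Delta _{n}^{-}(x))$, factor each transition matrix at the level where the path enters $L$, bound the head by a constant and the $L_{a}$-tail by Lemma \ref{dmin}; your handling of $\Delta _{n}(x)$ and of the second assertion is correct and matches the paper. The difficulty is the step you yourself flag as ``the hard part'' and leave unresolved, and the route you sketch for it cannot work as stated. You want to conclude, by compactness, that \emph{every} infinite descent from the characteristic vector of an ancestor $A$ of $\Delta _{n}^{\pm }(x)$ with $A\cap K\subseteq K_{L}$ is uniformly eventually in $L$, ``provided every such descent lies in $L_{e}$.'' That premise is false in general, and the paper's own Example \ref{ExSpecial} defeats it: there $K_{E}=(0,1)$ is relatively open, so any small net interval $A$ with $1/2$ in its interior has $A\cap K\subseteq K_{E}$; yet the net intervals $[z_{j},1/2]$ carrying the representation $(\gamma _{0},\gamma _{4},\gamma _{4},\dots )$ are eventually descendants of $A$, so $A$ admits an infinite descent whose letters are eventually $\gamma _{4}\notin E$. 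The point $1/2$ belongs to $K_{E}$ only through its \emph{other} representation, and the relative-interior hypothesis does not remove such descents; no choice of $N$ makes them all land in $L$.

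What is actually needed is weaker but still nontrivial: one must control only the \emph{specific} descent realized by $\Delta _{n}^{\pm }(x)$. From the first level $j^{\ast }(n)$ at which the common endpoint $b_{n}$ of $\Delta _{n}(x)$ and $\Delta _{n}^{+}(x)$ stabilizes, the representation of $\Delta _{n}^{+}(x)$ follows the one-sided (leftmost-children) representation of the boundary point $b_{n}$; since $b_{n}\in K_{L}$ only guarantees that one of the two representations of $b_{n}$ lies in $L_{e}^{0}$, this tail may sit in a different loop class, and the length $n-j^{\ast }(n)$ of the suspect stretch need not be bounded in $n$. Neither your proposal nor the compactness scheme addresses why this either cannot occur or is harmless for the $\liminf$; this is also precisely the point the paper itself passes over with the one-line assertion that the representations have the form $(\eta ^{-},\lambda )$ with $\left\vert \eta \right\vert \leq N$ and $\lambda \in L_{a}$. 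As written, your argument is incomplete at exactly this step, and the specific repair you propose (forcing all descents into $L$) is not available.
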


\begin{remark}
Note that if $K_{L}$ is open, then we necessarily have that $E\subset L$.
\end{remark}

\begin{proof}[Proof of Prop. \protect\ref{dminlocdim}]
First, assume $x\in \mathrm{int}K_{L}$. Choose $N$ so large that $B(x,2\rho
_{\min }^{N})\cap K\subseteq K_{L}$ and let $n\geq N$. Then, for any $n\geq
N $, we have $\Delta _{n}(x)$ and the adjacent level $n$ net intervals, $%
\Delta _{n}^{+}(x)$ and $\Delta _{n}^{-}(x)$, are contained in $B(x,2\rho
_{\min }^{N})$ and thus have symbolic representations of the form $\sigma
|n=(\eta ^{-},\lambda )$ where $\left\vert \eta \right\vert \leq N$ and $%
\lambda \in L_{a}$.

Let $C_{1}=\max \left\Vert T(\eta )\right\Vert$ over the finitely many paths 
$\eta$ of length at most $N$. Lemma \ref{dmin} guarantees that for $n\geq
n(\varepsilon )$, 
\begin{equation*}
\left\Vert T(\sigma |n)\right\Vert \leq \left\Vert T(\eta )\right\Vert
\left\Vert T(\lambda )\right\Vert \leq C_{1}\rho _{\min}^{n(d_{\min
}^{L}-\varepsilon )}
\end{equation*}
for all $\sigma$ of this form. Combined with (\ref{locdimft}), this gives 
\begin{eqnarray*}
\underline{\dim}_{\mathrm{loc}}\mu (x) &=&\liminf_{r\rightarrow 0}\frac{\log
\left( \mu (\Delta _{n}(x))+\mu (\Delta _{n}^{+}(x))+\mu (\Delta
_{n}^{-}(x))\right)}{n\log \rho _{\min}} \\
&\geq &\liminf_{n\rightarrow \infty}\frac{\log \rho _{\min}^{n(d_{\min
}^{L}-\varepsilon )}}{n\log \rho _{\min}}\geq d_{\min}^{L}-\varepsilon
\end{eqnarray*}

Now assume that $K_{L}$ is open. Given any $\varepsilon >0$, choose $\sigma
\in L_{e}^{0}$ with $\underline{d}(\sigma )\leq d_{\min}^{L}+\varepsilon$
and take $x\in K_{L}$ with symbolic representation $\sigma $. Then 
\begin{equation*}
d_{\min}^{L}+\varepsilon \geq \underline{d}(\sigma )\geq \underline{\dim} _{%
\mathrm{loc}}\mu (x)\geq d_{\min}^{L}
\end{equation*}
and that proves $d_{\min}^{L}=\inf_{x\in K_{L}}\left\{ \underline{\dim} _{%
\mathrm{loc}}\mu (x)\right\} $.
\end{proof}

\begin{corollary}
$d_{\min}^{\Omega}=\inf \{\underline{\dim}_{\mathrm{loc}}\mu (x):x\in K\}$.
\end{corollary}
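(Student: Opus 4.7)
The plan is to deduce this directly from Proposition \ref{dminlocdim} by taking $L = \Omega$, the full set of characteristic vectors.

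First I would verify the hypotheses of the proposition in this case. Since every admissible infinite path has all of its letters in $\Omega$, in particular eventually, we have $\Omega_{e}^{0}$ equal to the set of all admissible infinite paths starting with $\gamma_{0}$. Hence $K_{\Omega}=K$. The set $\Omega$ certainly contains a loop class (for instance, the essential class $E$). Moreover, $K_{\Omega}=K$ is trivially open in the relative topology on $K$.

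Applying Proposition \ref{dminlocdim} with $L=\Omega$ then yields
\begin{equation*}
d_{\min}^{\Omega}=\inf\{\underline{\dim}_{\mathrm{loc}}\mu(x):x\in K_{\Omega}\}=\inf\{\underline{\dim}_{\mathrm{loc}}\mu(x):x\in K\},
\end{equation*}
as required. There is no real obstacle here; the only thing to check is the identification $K_{\Omega}=K$, which is immediate from the definition of $L_{e}^{0}$ since the condition ``$\sigma_{j}\in L$ eventually'' is vacuous when $L=\Omega$.
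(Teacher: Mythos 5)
Your proposal is correct and matches the paper's own (one-line) proof, which simply invokes Proposition \ref{dminlocdim} with $L=\Omega$; you have merely spelled out the routine verifications ($K_{\Omega}=K$, openness, containment of a loop class) that the paper leaves implicit. No issues.
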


\begin{proof}
This is the special case of $L=\Omega $.
\end{proof}

Let $L$ be a set of characteristic vectors containing a loop class. We will
call a finite 
    path $\eta =(\gamma _{1},\dots,\gamma _{n}) \in L_a$ (with $n>1)$ a
boundary path if all $\gamma _{i}$, for $i>1$, are left-most children of $%
\gamma _{i-1}$, or all are right-most children. We call $\eta$ an \textbf{%
interior path} if it is not a boundary path.
If $L$ is a loop class
and each characteristic vector in $L$ has a unique child in $L$, then all
paths in $L$ will be simple cycles. We will call such a loop class 
\textbf{simple}. If $L$ does not admit an interior path, then it is necessarily
simple (although the converse is not necessarily true).

\begin{proposition}
\label{P:dmax}Suppose $L$ is a loop class.

\begin{enumerate}
\item If $L$ admits an interior path, then 
\begin{equation}
d_{\max}^{L}=\sup \{\overline{\dim}_{\mathrm{loc}}\mu (x):x\in K_{L}\}=\sup
\{\dim _{\mathrm{loc}}\mu (x):x\in K_{L}\}.  \label{dmax=}
\end{equation}
\label{P:dmax i}

\item Otherwise, $L$ is simple and in this case 
\begin{equation*}
d_{\max}^{L}=\frac{\log \sp(T(\theta (L)))}{\left\vert L\right\vert \log
\rho _{\min}}
\end{equation*}
(with the notation $\theta (L)$ introduced above). \label{P:dmax ii}
\end{enumerate}
\end{proposition}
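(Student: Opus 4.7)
We begin with (ii). Suppose $L$ admits no interior path. Then no edge in $L$ can lead to a strictly interior child (else the single-edge path has $n=2$ and is already interior), so every edge in $L$ goes to a leftmost or rightmost child. If some $v \in L$ had two children in $L$, they would necessarily be one leftmost and one rightmost; by strong connectivity we may build a cycle through each, and each such cycle must be all-leftmost (respectively all-rightmost) to avoid being an interior path. But concatenating the two cycles produces a single path in $L_a$ that combines a leftmost step (namely $v \to w_1$) with a later rightmost step ($v \to w_2$ on the second cycle), which is interior---contradiction. Hence each vertex in $L$ has a unique successor in $L$, so $L$ is simple; by finiteness together with strong connectivity, $L$ consists of a single cycle $\theta(L)$ with $|\theta(L)^-|=|L|$. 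Every $\sigma \in L_e^0$ is eventually a repetition of $\theta(L)$, and \eqref{d-cycle} of Example~\ref{Ex:periodic} gives $d(\sigma)=\log\sp(T(\theta(L)))/(|L|\log\rho_{\min})$, which is the common value of $d_{\max}^L$.

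For (i), the upper bound $\sup_{x\in K_L}\overline{\dim}_{\mathrm{loc}}\mu(x)\le d_{\max}^L$ is exactly \eqref{dmax-locdim}, and combined with $\dim_{\mathrm{loc}}\le\overline{\dim}_{\mathrm{loc}}$ it suffices to establish $d_{\max}^L\le\sup\{\dim_{\mathrm{loc}}\mu(x):x\in K_L\}$. Fix $\varepsilon>0$. Choose an infinite $\sigma^*\in L_a$ with $\overline{d}(\sigma^*)>d_{\max}^L-\varepsilon$ and a subsequence $n_k\to\infty$ along which $\|T(\sigma^*|n_k)\|^{1/n_k}\le\rho_{\min}^{d_{\max}^L-\varepsilon}$. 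Since $L$ is strongly connected and $\Omega$ is finite, there is a uniform upper bound on the length of $L_a$-paths between any two prescribed vertices, so for each $k$ we may pick a closing path $\alpha_k\in L_a$ of bounded length making $\theta_k:=(\sigma^*|n_k,\alpha_k)$ a cycle in $L_a$. The bound $\sp(T(\theta_k))\le\|T(\sigma^*|n_k)\|\,\|T(\alpha_k)\|$, together with the uniform bound on $\|T(\alpha_k)\|$ and $n_k/|\theta_k^-|\to 1$, yields
\[
d(\theta_k)\ge d_{\max}^L-\varepsilon-o(1)\qquad(k\to\infty).
\]

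Next we splice in an interior path: fix any interior $\eta\in L_a$ together with $L_a$-paths $\beta,\beta'$ joining the endpoints of $\theta_k$ to those of $\eta$, and for $N\ge 1$ concatenate $N$ copies of $\theta_k^-$ with $\beta\eta\beta'$ to form a cycle $\theta_k^{(N)}\in L_a$ containing $\eta$ as a subpath. The estimate $\sp(T(\theta_k^{(N)}))\le\|T(\theta_k)^N\|\,\|T(\beta\eta\beta')\|$, combined with Gelfand's formula $\|T(\theta_k)^N\|^{1/N}\to\sp(T(\theta_k))$, gives $d(\theta_k^{(N)})\to d(\theta_k)$ as $N\to\infty$, so for $k,N$ sufficiently large, $d(\theta_k^{(N)})>d_{\max}^L-2\varepsilon$. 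Let $x$ be a periodic point with cycle $\theta_k^{(N)}$. Because $\theta_k^{(N)}$ contains $\eta$ it is itself an interior path, so the periodic symbolic representation $\sigma$ of $x$ is not eventually a boundary path; but any point with two distinct symbolic representations has both of them eventually-boundary (the point being a shared endpoint of adjacent net intervals at every level from some point on), so $x$ admits only the unique representation $\sigma$. Example~\ref{Ex:periodic} then yields $\dim_{\mathrm{loc}}\mu(x)=d(\sigma)=d(\theta_k^{(N)})>d_{\max}^L-2\varepsilon$. Since $\varepsilon$ was arbitrary, (i) follows.

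The main obstacle is the spectral-radius control in the middle two steps: translating the asymptotic norm-decay of $T(\sigma^*|n_k)$ along the arbitrary infinite path $\sigma^*$ into control on the spectral radius of a genuine cycle, and then preserving that control when the interior path $\eta$ is spliced in. Strong connectivity of the loop class (supplying closing paths of uniformly bounded length, so that $n_k/|\theta_k^-|\to 1$) handles the first point, while Gelfand's formula (making the repeated block $\theta_k^-$ dominate the fixed tail $\beta\eta\beta'$ asymptotically) handles the second.
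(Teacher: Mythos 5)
Your proof is correct and follows essentially the same strategy as the paper's: the upper bound is exactly \eqref{dmax-locdim}, and the lower bound is obtained by closing a near-extremal truncation $\sigma^{*}|n_{k}$ into a cycle containing an interior path, so that the resulting periodic point has a unique symbolic representation with local dimension $\log \sp(T(\theta))/(|\theta^{-}|\log \rho _{\min })$, which submultiplicativity bounds below by $d_{\max }^{L}-2\varepsilon $. The only real difference is that the paper closes $\sigma |n_{k}$ \emph{directly} with an interior path chosen from a finite set of interior paths joining every pair of vertices of $L$ (such paths exist because any path containing an interior subpath is interior), which makes your extra Gelfand-formula splicing step (repeating $\theta _{k}$ $N$ times before appending $\beta \eta \beta ^{\prime }$) unnecessary, while your argument for part (ii) supplies a proof of the claim, asserted without proof before the proposition, that a loop class with no interior path is a single simple cycle.
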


\begin{proof}
Part \ref{P:dmax i}. Any loop class which admits an interior path has
interior paths (in the loop class), which join any two members of the class.
For each pair $\alpha ,\beta \in L$, pick one such interior path in $L$ and
call this finite set of interior paths $\mathcal{P}$.

Fix $\varepsilon >0$ and choose $\sigma \in L_{a}$ such that $\overline{d}%
(\sigma )\geq d_{\max }^{L}-\varepsilon /2$. Then select a subsequence $%
(n_{k})$ such that $\left\Vert T(\sigma |n_{k})\right\Vert \leq \rho _{\min
}^{n_{k}(d-\varepsilon )}$ where $d=d_{\max}^{L}$. Let $\sigma _{k}=\sigma
|n_{k}$ and choose a path $\lambda _{k}\in \mathcal{P}$ such that $\theta
_{k}=$ $(\sigma _{k}^{-},\lambda _{k})$ is a cycle. Let $x$ be a periodic
point in $K_{L}$ with symbolic representation having period $\theta _{k}$.

As $\lambda _{k}$ is an interior path, this symbolic representation of $x$
is unique. As per Example \ref{Ex:periodic}, the local dimension at $x$
exists and is given by 
\begin{equation*}
\dim _{\mathrm{loc}}\mu (x)=\frac{\log \sp(T(\theta _{k}))}{\left\vert
\theta _{k}^{-}\right\vert \log \rho _{\min}}.
\end{equation*}
For any $n$, the submultiplicativity of the norm implies 
\begin{equation*}
\left\Vert \left( T(\theta _{k})\right) ^{n}\right\Vert ^{1/n}\leq
\left\Vert T(\sigma |n_{k})\right\Vert \left\Vert T(\lambda _{k})\right\Vert
\leq C\rho _{\min}^{n_{k}(d-\varepsilon )}
\end{equation*}
where $C=\max_{\lambda \in \mathcal{P}}\left\Vert T(\lambda )\right\Vert $.
Thus if $n_{k}$ is sufficiently large, then 
\begin{equation*}
\frac{\log \sp(T(\theta _{k}))}{\left\vert \theta _{k}^{-}\right\vert \log
\rho _{\min}}=\lim_{n\rightarrow \infty}\frac{\log \left\Vert (T(\theta
_{k}))^{n}\right\Vert ^{1/n}}{(n_{k}+\left\vert \lambda _{k}\right\vert
)\log \rho _{\min}}\geq \frac{\log C\rho _{\min}^{n_{k}(d-\varepsilon )}} {%
n_{k}\log \rho _{\min}}\geq d-2\varepsilon .
\end{equation*}
As $\varepsilon >0$ was arbitrary, it follows that $\sup_{x\in K_{L}}\{\dim
_{\mathrm{loc}}\mu (x)\}\geq d$. Since we previously saw that $\sup_{x\in
K_{L}} \{\overline{\dim}_{\mathrm{loc}}\mu (x)\}\leq d_{\max}^{L}$ when $L$
is a loop class, this is enough to establish both equalities in (\ref{dmax=}%
).

Part \ref{P:dmax ii}. We have already noted that if $L$ does not admit any
interior paths, then the transition graph is the simple cycle $\theta (L)$.
Thus any $\sigma \in L_{e}^{0}$ is periodic with period $\theta (L)$ and $%
d(\sigma )=\log spT(\theta (L))/\left\vert L\right\vert \log \rho
_{\min}=d_{\max}^{L}$.
\end{proof}

Here are some immediate corollaries.

\begin{corollary}
Let $E$ be the essential class. Then $d_{\max}^{E}=\sup \{\dim _{\mathrm{loc}%
}\mu (x):x\in K_{E}\}$. \label{Cor:dE}
\end{corollary}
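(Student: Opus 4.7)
The corollary is the specialization $L = E$ of Proposition \ref{P:dmax}, which applies because the essential class $E$ is by definition a loop class. My plan is to run through both cases of that proposition.

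If $E$ admits an interior path, Proposition \ref{P:dmax}(i) delivers $d_{\max}^{E} = \sup\{\dim_{\mathrm{loc}}\mu(x) : x \in K_E\}$ with no additional work; this is the generic case, covering the Bernoulli convolution and $(m,d)$-Cantor examples of the paper.

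If $E$ does not admit an interior path, then by the remarks preceding Proposition \ref{P:dmax} the class $E$ is simple, and Proposition \ref{P:dmax}(ii) gives $d_{\max}^{E} = \log \sp(T(\theta(E)))/(|E|\log \rho_{\min})$. Every $\sigma \in E_{e}^{0}$ is then eventually periodic with period $\theta(E)$, so by Example \ref{Ex:periodic} we have $d(\sigma) = d_{\max}^{E}$ for every such $\sigma$. The upper bound $\sup\{\dim_{\mathrm{loc}}\mu(x) : x \in K_E\} \leq d_{\max}^{E}$ is immediate from (\ref{dmax-locdim}), while the matching lower bound follows by exhibiting any $x \in K_E$ with a unique symbolic representation $\sigma$, for then $\dim_{\mathrm{loc}}\mu(x) = d(\sigma) = d_{\max}^{E}$.

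The main obstacle is producing such a witness $x$ in the simple case, where $K_E$ can be countable. I would handle this by observing that the endpoints of $K$ always carry a unique symbolic representation, so if an endpoint lies in $K_E$ it serves as the required witness; otherwise, among the countably many sequences in $E_{e}^{0}$ of the form (finite prefix)$\cdot \theta(E)^{\infty}$, one argues that at least one corresponds to a non-boundary point of any net interval, since at each level only finitely many sequences can encode net-interval endpoints. With that in hand, the corollary reduces cleanly to the two cases of Proposition \ref{P:dmax}.
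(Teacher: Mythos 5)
Your Case 1 is fine, but the paper's entire proof consists of the observation that Case 2 never occurs: the essential class \emph{always} admits an interior path, precisely because every child of a characteristic vector in $E$ again lies in $E$. (Indeed, $K\cap \Delta$ is infinite for every net interval $\Delta$, so every vertex of the transition graph eventually branches; for a vertex in $E$ both branches remain in $E$, and concatenating a non-leftmost step with a non-rightmost step inside the strongly connected set $E$ produces an interior path.) Your proposal neither notes nor proves this, and instead treats the simple case as a live possibility, so the burden falls on your Case 2 argument.

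That argument has a genuine gap. If $E$ were simple, then each characteristic vector in $E$ would have a unique child, so every net interval whose characteristic vector lies in $E$ would contain exactly one point of $K$, making $K_{E}$ countable. Your witness construction rests on a counting argument --- ``at each level only finitely many sequences can encode net-interval endpoints'' --- but over all levels the endpoints form a countable set, the same cardinality as your candidate set, so nothing prevents every point of $K_{E}$ from being a net-interval endpoint whose \emph{second} symbolic representation $\tau$ lies outside $E_{e}^{0}$ and has $d(\tau)<d_{\max }^{E}$; by Example \ref{Ex:periodic} the local dimension is then $\min (d(\sigma ),d(\tau ))<d_{\max }^{E}$. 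This is not a hypothetical failure mode: Example \ref{ExSp1} exhibits a simple maximal loop class $L=\{\gamma _{4}\}$ with $K_{L}=\{1/2\}$ a single boundary point and $\sup \{\overline{\dim }_{\mathrm{loc}}\mu (x):x\in K_{L}\}<d_{\max }^{L}$. So for a general simple loop class the conclusion you are trying to prove in Case 2 is simply false, and the corollary survives only because the essential class cannot be simple. To repair the proof, replace Case 2 by the one-line structural argument above.
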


\begin{remark}
We note that if $L$ is a loop class and $K_L$ is relatively open in $K$ then 
$L = E$.
\end{remark}

\begin{proof}[Proof of Cor. \protect\ref{Cor:dE}]
The essential class admits an interior path since every child of a
characteristic vector in $E$ is again in $E$.
\end{proof}

Another consequence of Proposition \ref{P:dmax} is that 
\begin{equation*}
d_{\max}^{\Omega}=\max \left\{ \sup_{x\in K}\{\dim _{\mathrm{loc}}\mu (x)\},%
\text{ } \max_{L\text{ simple, max loop class}}\left\{ \frac{\log \sp%
(T(\theta (L)))} {\left\vert L\right\vert \log \rho _{\min}}\right\}
\right\} .
\end{equation*}
However, $d_{\max}^{\Omega}$ $>\sup_{x\in K}\{\dim _{\mathrm{loc}}\mu (x)\}$
is possible, as the next example illustrates.

\begin{example}
Consider, again, the IFS of Example \ref{ExSp1}. We observed there that $%
d_{\max}^{L}=\log 17/\log 3$ for $L=\{\gamma _{4}\}$. For each of the other
singleton maximal loop classes $L$, one can compute that $d_{\max}^{L}=\log
(17/4)/\log 3$, so $d_{\max}^{\Omega}=\log 17/\log 3$. With more work it can
be shown that $\overline{\dim}_{\mathrm{loc}}\mu (x)\leq \log (17/2)/\log 3$
for all $x\in \lbrack 0,1]$. See Example \ref{ExSpecial} for more details.
\end{example}

\section{Loop class spectrum\label{sp}}

The $L^{q}$-spectrum can be defined for any measure.

\begin{definition}
\label{D:tau} The $L^{q}$-\textbf{spectrum} of the measure $\mu$ is defined
to be the function $\tau (\mu ,q)$ defined on $\mathbb{R}$ by 
\begin{equation*}
\tau (\mu ,q)=\liminf_{\delta \rightarrow 0}\frac{\log \sup \sum_{i}\left(
\mu (B(x_{i},\delta ))\right) ^{q}}{\log \delta},
\end{equation*}
where the supremum is over all countable collections of disjoint open balls, 
$B(x_{i},\delta )$, with centres $x_{i}\in \mathrm{supp} \mu$. We write $%
\tau (q)$ if the measure $\mu$ is clear.
\end{definition}

\begin{remark}
This is also known in the literature as the lower $L^{q}$-spectrum. The
upper $L^{q}$-spectrum (and $L^{q}$-spectrum) could be similarly defined,
but as these will not be of interest to us, we will refer to $\tau$ as the $%
L^{q}$-spectrum, for short.
\end{remark}

As proven in \cite[Section 3]{LN}, for any measure $\mu$, we have $\tau (\mu
,\cdot )$ is an increasing concave function. Further it does not take on
either of the values $\pm \infty$ if and only if 
\begin{equation}
\liminf_{\delta \rightarrow 0}\frac{\log (\inf_{x\in \mathrm{supp} \mu}\mu
(B(x,\delta )))}{\log \delta}<\infty \text{.}  \label{LN}
\end{equation}

We show in Prop. \ref{tauformula} that this is indeed the case for measures
of finite type. For finite type measures $\mu $, balls centred in the
support of $\mu $ can be replaced by net intervals in the definition of the $%
L^{q}$-spectrum. This was shown by Feng for equicontractive finite type
measures, \cite[Prop. 5.6]{FengSmoothness}. The proof for the general case
is similar and is included here for completeness.

\begin{proposition}
\label{tauformula}Suppose $\mu $ is a measure of finite type. The $L^{q}$%
-spectrum of $\mu $ can be computed as 
\begin{equation}
\tau (\mu ,q)=\liminf_{n\rightarrow \infty }\frac{\log \sum_{\Delta \in 
\mathcal{F}_{n}}(\mu (\Delta ))^{q}}{n\log \rho _{\min }}=\liminf_{n%
\rightarrow \infty }\frac{\log \sum_{\left\vert \sigma \right\vert =n,\sigma
\in \Omega _{a}^{0}}\left\Vert T(\sigma )\right\Vert ^{q}}{n\log \rho _{\min
}}.  \label{LspReln}
\end{equation}%
Moreover, $\tau (\mu ,q)$ is real-valued for all $q$.
\end{proposition}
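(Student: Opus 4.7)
My plan is to prove both parts by comparing the ball-based definition of $\tau$ with a discretized net-interval version, then deducing real-valuedness from explicit two-sided bounds on $\sum_{\Delta \in \mathcal{F}_n}\mu(\Delta)^q$. The second equality in (\ref{LspReln}) is immediate: since $\mu(\Delta)$ and $\Vert T(\sigma)\Vert$ are comparable with constants independent of $n$ (stated after the definition of transition matrices), taking $q$-th powers and summing preserves the comparability up to a constant factor, which disappears in the limit.

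For the first equality, I would fix $\delta>0$ and let $n=n(\delta)$ be the unique integer with $\rho_{\min}^{n+1}<\delta\le\rho_{\min}^{n}$, so that $n\log\rho_{\min}$ and $\log\delta$ differ by at most $|\log\rho_{\min}|$. Using only the uniform length bounds $c\rho_{\min}^{n}\le\ell(\Delta)\le\rho_{\min}^{n}$ and Lemma \ref{incl}, I would establish the following geometric facts: \textbf{(G1)} any ball $B(x,\delta)$ meets at most $M$ net intervals in $\mathcal{F}_n$, with $M$ depending only on $c$; \textbf{(G2)} in any packing at scale $\delta$, each $\Delta\in\mathcal{F}_n$ is met by at most $K$ balls; \textbf{(G3)} any ball $B(x,\delta)$ with $x\in\supp\mu$ contains $\Delta_{n+1}(x)$, since $\ell(\Delta_{n+1}(x))\le\rho_{\min}^{n+1}<\delta$; and \textbf{(G4)} by Lemma \ref{incl}(i), each $\Delta\in\mathcal{F}_n$ contains a sub-interval $S_{\sigma\nu}[0,1]$ of length comparable to $\rho_{\min}^{n+N}$ whose midpoint $y_\Delta$ is in $K=\supp\mu$. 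I would then prove the two-sided estimate
\[
c_-(q)\sum_{\Delta\in\mathcal{F}_n}\mu(\Delta)^q \;\le\; \sup_{\text{packing at scale }\delta}\sum_i \mu(B(x_i,\delta))^q \;\le\; c_+(q)\sum_{\Delta\in\mathcal{F}_{n+1}}\mu(\Delta)^q,
\]
with constants depending only on $q$ and the IFS. For the upper bound when $q\ge 0$, I write $\mu(B_i)\le\sum_{\Delta\cap B_i\neq\emptyset}\mu(\Delta)$ via (G1), apply $(\sum a_j)^q\le M^{\max(q-1,0)}\sum a_j^q$, and sum using (G2). For $q<0$, I use (G3) to get $\mu(B_i)\ge\mu(\Delta_{n+1}(x_i))$, hence $\mu(B_i)^q\le\mu(\Delta_{n+1}(x_i))^q$; the disjointness of the balls together with $\ell(\Delta)<\delta$ for $\Delta\in\mathcal{F}_{n+1}$ forces the map $i\mapsto\Delta_{n+1}(x_i)$ to be injective, giving the bound. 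For the lower bound when $q\ge 0$, I split $\mathcal{F}_n$ into $\lceil 2/c\rceil+1$ congruence classes in the natural ordering, extract the one maximizing the contribution, and form a packing using any $x_\Delta\in\Delta\cap\supp\mu$ with radius $\rho_{\min}^n$: separation of centers ensures disjointness and each ball contains its $\Delta$. For $q<0$, I use (G4), placing balls $B(y_\Delta,\delta'')$ with $\delta''$ of order $\rho_{\min}^{n+N}$ inside the sub-intervals $S_{\sigma\nu}[0,1]\subseteq\Delta$; these balls are automatically disjoint and $\mu(B)\le\mu(\Delta)$, so $\mu(B)^q\ge\mu(\Delta)^q$. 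Passing to the shifted scale introduces only a bounded additive change in $n$, which vanishes in the limit.

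Finally, for real-valuedness, I would bound $\sum_{\Delta\in\mathcal{F}_n}\mu(\Delta)^q$ above and below by functions of the form $C_1\rho_{\min}^{-n\alpha(q)}$. Since net intervals cover $K$ with bounded multiplicity, $|\mathcal{F}_n|\le C\rho_{\min}^{-n}$ and $\sum_{\Delta}\mu(\Delta)\le 2$. For $q\ge 1$, $\sum\mu(\Delta)^q\le(\max\mu(\Delta))^{q-1}\sum\mu(\Delta)$ is bounded; for $0\le q\le 1$, Hölder yields $\sum\mu(\Delta)^q\le|\mathcal{F}_n|^{1-q}\bigl(\sum\mu(\Delta)\bigr)^q\le C'\rho_{\min}^{-n(1-q)}$; for $q<0$, the key lower bound (\ref{munet}), namely $\mu(\Delta)\ge (\min p_j)^{s(n+N)}$, gives $\sum\mu(\Delta)^q\le |\mathcal{F}_n|(\min p_j)^{qs(n+N)}$, and the exponent is polynomial in $n$. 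The complementary lower bounds $\sum\mu(\Delta)^q\ge\max(\mu(\Delta_0)^q,1)$ (using $\mu(\Delta_0)\le 1$ for $q\le 0$, and a specific $\Delta_0$ with $\mu(\Delta_0)\ge 1/|\mathcal{F}_n|$ for $q>0$) keep $\log\sum/(n\log\rho_{\min})$ bounded away from $+\infty$. Dividing the logarithms of these bounds by $n\log\rho_{\min}$ shows $\tau(\mu,q)\in\mathbb{R}$ for every $q$.

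The main obstacle is producing the lower bound on the supremum of $\sum\mu(B_i)^q$ when $q<0$: the natural packing used for $q\ge 0$ (one ball per net interval in $\mathcal{F}_n$) goes in the wrong direction, and one must instead work inside the net intervals at a slightly finer scale using Lemma \ref{incl}, while still keeping the scale $\delta''$ comparable to $\rho_{\min}^n$ up to a bounded factor so that the liminf is unaffected.
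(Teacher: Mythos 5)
Your overall strategy is the same as the paper's: discretize at scale $\rho_{\min}^{n}$, bound the packing sums above by net-interval sums (splitting into $q\geq 0$, where each ball is covered by boundedly many net intervals of level $n$, and $q<0$, where each ball contains a net interval of the next level, with the injectivity of $i\mapsto \Delta_{n+1}(x_i)$ forced by disjointness), and bound them below by exhibiting a packing of balls centred in $K$ whose measures are controlled by the $\mu(\Delta)$ via Lemma \ref{incl} \ref{incl i}. Your real-valuedness argument via explicit exponential-in-$n$ bounds on $\sum_{\Delta}\mu(\Delta)^{q}$, using \eqref{munet} for $q<0$, is equivalent to the paper's verification of condition \eqref{LN}. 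Almost all of this is sound, and the second equality by comparability is exactly as in the paper.

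The one step that fails as written is (G4): the midpoint of $S_{\sigma\nu}[0,1]$ is $S_{\sigma\nu}(1/2)$, and $1/2$ need not lie in $K$ (e.g.\ when $K$ is a Cantor set), so your proposed centre $y_{\Delta}$ is not an admissible centre in Definition \ref{D:tau}. The points of $S_{\sigma\nu}[0,1]$ guaranteed to lie in $K$ are its endpoints $S_{\sigma\nu}(0)$ and $S_{\sigma\nu}(1)$ (since the convex hull of $K$ is $[0,1]$, so $0,1\in K$); but a ball centred at an endpoint with radius comparable to $\ell(S_{\sigma\nu}[0,1])$ protrudes outside that sub-interval, and then neither the containment $B\subseteq \Delta$ (which is what gives $\mu(B)^{q}\geq \mu(\Delta)^{q}$ for $q<0$) nor the disjointness of the balls across distinct $\Delta$ is automatic. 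The paper's fix is to take $N\geq N_{0}$ with $\rho_{\min}^{N_{0}}\leq c/3$, so that $\ell(S_{\sigma\nu}[0,1])\leq \ell(\Delta)/3$; then at least one endpoint $x_{\Delta}$ of $S_{\sigma\nu}[0,1]$ has distance at least $\ell(S_{\sigma\nu}[0,1])$ from both endpoints of $\Delta$, giving $S_{\sigma\nu}[0,1]\subseteq B(x_{\Delta},\ell(S_{\sigma\nu}[0,1]))\subseteq \Delta$. Together with a maximality choice among the neighbours of $\Delta$, this single packing yields a two-sided comparison of $\mu(B)$ with $\mu(\Delta)$ and so handles both signs of $q$ at once, making your separate congruence-class construction for $q\geq 0$ unnecessary (though it is also correct). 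With this correction your argument goes through.
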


\begin{proof}
We first remark that as $\mu (\Delta )$ is comparable to $\left\Vert
T(\sigma )\right\Vert$ when $\Delta$ has symbolic representation $\sigma$,
the second equality of the display is clear.

Given $0<\delta <\rho _{\min}$, choose the integer $n$ such that $\rho
_{\min}^{n}\leq \delta <\rho _{\min}^{n-1}$. Then, for any $x\in \mathrm{supp%
} \mu$, $B(x,\delta )$ contains $\Delta _{n}(x)$. Applying (\ref{munet}), it
follows that there are positive constants $C,s$ such that $\mu (B(x,\delta
))\geq \mu (\Delta _{n})\geq C\left( \min p_{j}\right) ^{sn}$. That
certainly implies the left side of (\ref{LN}) is finite and hence $\tau (q)$
is real valued.

Furthermore, if $q<0$, then 
\begin{equation*}
\left( \mu (B(x,\delta ))\right) ^{q}\leq \mu (\Delta _{n})^{q},
\end{equation*}%
while if $q\geq 0$, then 
\begin{equation*}
\left( \mu (B(x,\delta ))\right) ^{q}\leq \left( \sum_{\substack{ \text{%
level }n\text{ net intervals }\Delta _{n}  \\ \text{intersecting }B(x,\delta
) }}\mu (\Delta _{n})\right) ^{q}.
\end{equation*}%
Moreover, the fact that $\delta <\rho _{\min }^{n-1}$ ensures that $%
B(x,\delta )$ is contained in the union of the at most $2/(c\rho _{\min
})=C_{0}$ net intervals of level $n$ that intersect it. Since each net
interval of level $n$ can intersect at most two balls of radius $\delta $,
we have 
\begin{equation}
\sum_{i}\left( \mu (B(x_{i},\delta ))\right) ^{q}\leq \left\{ 
\begin{array}{cc}
2C_{0}^{q}\sum_{\Delta \in \mathcal{F}_{n}}\mu (\Delta )^{q} & \text{if }%
q\geq 0 \\ 
\sum_{\Delta \in \mathcal{F}_{n}}\mu (\Delta )^{q} & \text{if }q<0%
\end{array}%
\right. .  \label{sp2}
\end{equation}%
It is easy to see from (\ref{sp2}) that for either choice of $q$, 
\begin{equation*}
\tau (q)\geq \liminf_{n\rightarrow \infty }\frac{\log \sum_{\Delta \in 
\mathcal{F}_{n}}(\mu (\Delta ))^{q}}{n\log \rho _{\min }}.
\end{equation*}

For the other inequality, we argue as follows. Choose $N_{0}$ such that $%
\rho _{\min}^{N_{0}}\leq c/3$. Consider any level $n$ net interval $\Delta
=[a,b]$ and its neighbour set $((a_{j},L_{j}))_{j=1}^{J}$. For each $j$,
choose $\sigma _{j}\in \Lambda _{n}$ associated with $(a_{j},L_{j})$ and $%
\Delta $. Select the index $i\in \{1,\dots,J\}$ so that $\sum_{\alpha
:S_{\alpha}=S_{\sigma _{j}}}p_{\alpha}$ is maximal for $j=i$.

Obtain an integer $N\geq N_{0}$ and finite set $\mathcal{W}$, as in Lemma %
\ref{incl} \ref{incl i}, with the property that there is some $\nu \in 
\mathcal{W}$ with $S_{\sigma _{i}\nu }[0,1]\subseteq \Delta $ and $\sigma
_{i}\nu \in \Lambda _{n+N}$. As the length of this interval satisfies 
\begin{equation*}
\rho _{\min }^{n+N+1}\leq \ell (S_{\sigma _{i}\nu }[0,1])\leq \ell (\Delta
)/3,
\end{equation*}%
one of the two endpoints of the interval $S_{\sigma _{i}\nu }[0,1]$, say $%
x_{\Delta }$, has distance at least $\ell (S_{\sigma _{i}\nu }[0,1])$ to
both $a$ and $b$. Of course, $x_{\Delta }$ belongs to $K$. Therefore 
\begin{equation*}
S_{\sigma _{i}\nu }[0,1]\subseteq B(x_{\Delta },\ell (S_{\sigma _{i}\nu
}[0,1]))\subseteq \Delta .
\end{equation*}

If we let $M$ be the maximum number of elements in any neighbour set, then
the maximality property in the choice of index $i$ means that 
\begin{equation*}
\mu (\Delta )\geq \mu (B(x_{\Delta},\ell (S_{\sigma _{i}\nu}[0,1])))\geq
\sum_{\omega :S_{\omega}=S_{\sigma _{i}v}}p_{\omega}\geq p_{\nu
}\sum_{\alpha :S_{\alpha}=S_{\sigma _{i}}}p_{\alpha}\geq p_{\nu}\frac{\mu
(\Delta )}{M}.
\end{equation*}
As there are only finitely many such words $v$, there is some $\varepsilon
>0 $ such that $p_{v}>\varepsilon$ for all such $v$. Hence $\mu
(B(x_{\Delta},\ell (S_{\sigma _{i}\nu}[0,1])))\geq \mu (\Delta )\varepsilon
/M$ for all $\Delta $.

Since the balls $B(x_{\Delta},\ell (S_{\sigma _{i}\nu}[0,1])_{\min }^{n+N})$
are disjoint, centred in $K$ and have radius comparable to $\rho _{\min}^{n}$%
, we can now conclude that for all $q$, 
\begin{equation*}
\tau (q)\leq \liminf_{n\rightarrow \infty}\frac{\log \sum_{\Delta \in 
\mathcal{F}_{n}}(\mu (\Delta ))^{q}}{n\log \rho _{\min}}.
\end{equation*}
\end{proof}

Motivated by this, we make the following definition for the loop class $%
L^{q} $-spectrum of a measure $\mu$ of finite type.

\begin{definition}
Let $\mu$ be a measure of finite type and let $L$ be a set of characteristic
vectors containing a loop class. We define the $L^{q}$-\textbf{spectrum of} $%
\mu$ \textbf{on} $L$ as the function $\tau _{L}(\mu ,q)$ defined at $q\in 
\mathbb{R}$ by 
\begin{equation*}
\tau _{L}(\mu ,q)=\liminf_{k\rightarrow \infty}\frac{\log \sum_{\sigma \in
L_{a},\left\vert \sigma \right\vert =k}\left\Vert T(\sigma )\right\Vert ^{q}%
} {k\log \rho _{\min}}\text{ .}
\end{equation*}
Again, we suppress $\mu$ in the notation if the measure is clear.
\end{definition}

We will first prove that $\tau _{\Omega}(\mu ,q)=\tau (\mu ,q)$ and $\tau
_{\Omega}$ is minimal over all $\tau _{L}$.

\begin{theorem}
\label{tauletauL}Let $L\subseteq \Omega$ be any set of characteristic
vectors containing a loop class. Then $\tau (\mu ,q)=\tau _{\Omega }(\mu
,q)\leq \tau _{L}(\mu ,q)$ for all $q$ and hence 
\begin{equation*}
\tau (\mu ,q)\leq \min \{\tau _{L}(\mu ,q):L\text{ maximal loop class}\} 
\text{ for all }q\in \mathbb{R}.
\end{equation*}
\end{theorem}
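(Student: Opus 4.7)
The plan is to prove the chain $\tau \geq \tau_\Omega$, $\tau \leq \tau_\Omega$, and $\tau_\Omega \leq \tau_L$; combining these gives $\tau = \tau_\Omega \leq \tau_L$, and the displayed bound follows by minimizing over all maximal loop classes $L$.

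Two of these comparisons are immediate from path-set inclusions. Since $\Omega_a^0 \subseteq \Omega_a$ and $L \subseteq \Omega$ forces $L_a \subseteq \Omega_a$, for each $k$ the sums defining $\tau$ (via Proposition \ref{tauformula}) and $\tau_L$ are each bounded above by the corresponding sum over $\Omega_a$. Dividing by $k\log\rho_{\min} < 0$ reverses the inequality, so taking $\liminf$ yields $\tau \geq \tau_\Omega$ and $\tau_L \geq \tau_\Omega$.

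The remaining inequality $\tau \leq \tau_\Omega$ is the heart of the argument. Every $\gamma \in \Omega$ is reachable from $\gamma_0$ in the transition graph, since $\gamma$ appears as the final entry of some symbolic representation of a net interval. For each $\gamma \in \Omega$ I would fix one admissible path $\eta_\gamma$ from $\gamma_0$ to $\gamma$; as $\Omega$ is finite the lengths $|\eta_\gamma|$ are bounded by some $M$. Given $\sigma \in \Omega_a$ with $|\sigma| = k$ starting at $\gamma$, the concatenation $\eta_\gamma \cdot \sigma$ lies in $\Omega_a^0$, has length in $\{k, \dots, k+M\}$, and the assignment $(\gamma, \sigma) \mapsto \eta_\gamma \cdot \sigma$ is injective (the characteristic vector at position $|\eta_\gamma|$ recovers $\gamma$). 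By Lemma \ref{useful} \ref{useful i}, $\left\Vert T(\eta_\gamma)T(\sigma)\right\Vert$ is comparable to $\left\Vert T(\sigma)\right\Vert$ with constants depending only on the finite family $\{\eta_\gamma\}$, so for both signs of $q$ there is a uniform $C_q > 0$ with $\left\Vert T(\sigma)\right\Vert^q \leq C_q \left\Vert T(\eta_\gamma \cdot \sigma)\right\Vert^q$. Grouping paths by starting vertex and using injectivity,
$$\sum_{|\sigma|=k,\,\sigma \in \Omega_a} \left\Vert T(\sigma)\right\Vert^q \;\leq\; C_q' \max_{0 \leq j \leq M} S_0(k+j),$$
where $S_0(n) := \sum_{|\sigma|=n,\,\sigma \in \Omega_a^0} \left\Vert T(\sigma)\right\Vert^q$ is the sum from Proposition \ref{tauformula}.

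The main obstacle will be absorbing the bounded length shift $k \mapsto k+j$ inside the $\liminf$. Taking logarithms, dividing by $k\log\rho_{\min} < 0$, and choosing $j_k \in \{0, \dots, M\}$ realizing the max, the inequality becomes
$$\frac{\log \sum_{|\sigma|=k,\,\sigma \in \Omega_a} \left\Vert T(\sigma)\right\Vert^q}{k \log\rho_{\min}} \;\geq\; \frac{\log C_q'}{k \log\rho_{\min}} + \frac{k+j_k}{k} \cdot \frac{\log S_0(k+j_k)}{(k+j_k)\log\rho_{\min}}.$$
The first term is $o(1)$; the uniform bounds $c_1 (\min p_j)^{sn} \leq \left\Vert T(\sigma)\right\Vert \leq c_2$ coming from (\ref{munet}) and the comparability of $\left\Vert T\right\Vert$ with $\mu(\Delta) \in [0,1]$, together with the $O(\rho_{\min}^{-n})$ bound on the number of level-$n$ paths, give $|\log S_0(n)| = O(n)$, so the normalized quantities $\log S_0(n)/(n \log\rho_{\min})$ are uniformly bounded in $n$. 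The factor $(k+j_k)/k \to 1$ then absorbs the shift, and taking $\liminf_k$ produces $\tau_\Omega \geq \tau$, completing the proof.
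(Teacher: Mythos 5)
Your proposal is correct and takes essentially the same approach as the paper: the easy inequalities come from the path-set inclusions $\Omega_a^0, L_a \subseteq \Omega_a$, and the key inequality $\tau_\Omega \geq \tau$ is obtained exactly as in the paper by partitioning the paths in $\Omega_a$ according to their first letter, prepending a fixed admissible path from $\gamma_0$ to that letter, and invoking the norm comparability of Lemma \ref{useful}. The only (cosmetic) difference is in the final bookkeeping: the paper converts the liminf into the explicit bound $\sum_{|\sigma|=k+n_i+1,\,\sigma\in\Omega_a^0}\Vert T(\sigma)\Vert^q\leq\rho_{\min}^{(k+n_i+1)(\tau(q)-\varepsilon)}$ for large $k$, whereas you keep the shifted sums and absorb the bounded length shift directly inside the liminf using the uniform boundedness of the normalized quantities.
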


\begin{proof}
We first check that $\tau (\mu ,q)=\tau _{\Omega }(\mu ,q)$. {}Since 
\begin{equation*}
\sum_{\sigma \in \Omega _{a}^{0},\left\vert \sigma \right\vert =k}\left\Vert
T(\sigma )\right\Vert ^{q}\leq \sum_{\sigma \in \Omega _{a},\left\vert
\sigma \right\vert =k}\left\Vert T(\sigma )\right\Vert ^{q},
\end{equation*}
it is immediate that $\tau (\mu ,q)\geq \tau _{\Omega }(\mu ,q)$.

To prove the other inequality, we start by partitioning the right hand sum
according to the first letter of $\sigma $. Let $\Omega =\{\gamma
_{0},\gamma _{1},\dots,\gamma _{\left\vert \Omega \right\vert -1}\}$ be the
complete list of characteristic vectors and for each $i$, choose $\eta _{i}$
such that $(\gamma _{0},\eta _{i},\gamma _{i})$ is an admissible path where $%
\left\vert \eta _{i}\right\vert =n_{i}$ (and $(\eta _{0},\gamma _{0})$
should be understood to be the empty word).

As $\left\Vert T(\gamma _{i},\chi )\right\Vert$ is comparable to $\left\Vert
T(\gamma _{0},\eta _{i},\gamma _{i},\chi )\right\Vert $, there is a constant 
$C_{i}(q)$ such that 
\begin{eqnarray*}
\sum_{\sigma \in \Omega _{a},\left\vert \sigma \right\vert =k}\left\Vert
T(\sigma )\right\Vert ^{q} &=&\sum_{i=0}^{\left\vert \Omega \right\vert
-1}\sum_{\left\vert \chi \right\vert =k-1}\left\Vert T(\gamma _{i},\chi
)\right\Vert ^{q} \\
& \leq & \sum_{i=0}^{\left\vert \Omega \right\vert -1}C_{i}\sum_{\left\vert
\chi \right\vert =k-1}\left\Vert T(\gamma _{0},\eta _{i},\gamma _{i},\chi
)\right\Vert ^{q} \\
&\leq &\sum_{i=0}^{\left\vert \Omega \right\vert -1}C_{i}\sum_{\left\vert
\sigma \right\vert =k+n_{i}+1,\sigma \in \Omega _{a}^{0}}\left\Vert T(\sigma
)\right\Vert ^{q}.
\end{eqnarray*}
The definition of $\tau (\mu ,q)$ implies that given any $\varepsilon >0$
there is some $k_{0}=k_{0}(\varepsilon ,q)$ such that for all $k\geq k_{0}$
we have 
\begin{equation*}
\frac{\log \sum_{\left\vert \sigma \right\vert =k,\sigma \in \Omega
_{a}^{0}}\left\Vert T(\sigma )\right\Vert ^{q}}{k\log \rho _{\min}}\geq \tau
(\mu ,q)-\varepsilon .
\end{equation*}
Thus for $k$ sufficiently large 
\begin{equation*}
\sum_{\left\vert \sigma \right\vert =k+n_{i}+1,\sigma \in \Omega
_{a}^{0}}\left\Vert T(\sigma )\right\Vert ^{q}\leq \rho _{\min
}^{(k+n_{i}+1)(\tau (q)-\varepsilon )}.
\end{equation*}
We deduce that 
\begin{equation*}
\sum_{\sigma \in \Omega _{a},\left\vert \sigma \right\vert =k}\left\Vert
T(\sigma )\right\Vert ^{q}\leq \sum_{i=0}^{\left\vert \Omega \right\vert
-1}C_{i}^{\prime }\rho _{\min}^{k(\tau (q)-\varepsilon )}\leq C\rho _{\min
}^{k(\tau (q)-\varepsilon )}
\end{equation*}
for suitable constants $C_{i}^{\prime },C$ depending on $\varepsilon$ and $q$%
. It is immediate from this that 
\begin{equation*}
\frac{\log \sum_{\sigma \in \Omega _{a},\left\vert \sigma \right\vert
=k}\left\Vert T(\sigma )\right\Vert ^{q}}{k\log \rho _{\min}}\geq \frac{\log
C}{k\log \rho _{\min}}+\tau (\mu ,q)-\varepsilon
\end{equation*}
and letting $k\rightarrow \infty $, we see that $\tau _{\Omega }(\mu ,q)\geq
\tau (\mu ,q)-\varepsilon$ for all $\varepsilon >0$. Thus $\tau _{\Omega
}(\mu ,q)=\tau (\mu ,q)$ for all $q$.

For any $L\subseteq \Omega$, 
\begin{equation*}
\sum_{\sigma \in L_{a},\left\vert \sigma \right\vert =k}\left\Vert T(\sigma
)\right\Vert ^{q}\leq \sum_{\sigma \in \Omega _{a},\left\vert \sigma
\right\vert =k}\left\Vert T(\sigma )\right\Vert ^{q},
\end{equation*}
hence $\tau _{L}(\mu ,q)\geq \tau _{\Omega }(\mu ,q)$ for all $q$,
completing the proof.
\end{proof}

We next obtain pointwise bounds for the functions $\tau _{L}$. By the
incidence matrix of a set of characteristic vectors $L=\{\chi
_{1},\dots,\chi _{\left\vert L\right\vert }\}$ we mean the $\left\vert
L\right\vert \times \left\vert L\right\vert$ matrix whose $(i,j)$ entry is $%
1 $ if $\chi _{i}$ has $\chi _{j}$ as a child and equals $0$ otherwise.

\begin{proposition}
\label{taubound}Let $L$ be a set of characteristic vectors containing a loop
class and let $I_{L}$ denote its incidence matrix. Then 
\begin{equation*}
qd_{\min}^{L}\geq \tau _{L}(\mu ,q)\geq qd_{\min}^{L}-\frac{\log \sp(I_{L})%
} {\left\vert \log \rho _{\min}\right\vert }\text{ if }q\geq 0.
\end{equation*}
If $L$ is a loop class, then 
\begin{equation*}
qd_{\max}^{L}\geq \tau _{L}(\mu ,q)\geq qd_{\max}^{L}-\frac{\log \sp(I_{L})%
} {\left\vert \log \rho _{\min}\right\vert }\text{ if }q<0\text{.}
\end{equation*}
\end{proposition}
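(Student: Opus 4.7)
The plan is to prove the four claimed inequalities in two pairs, one for $q \geq 0$ and one for $q < 0$. In each pair, the upper bound on $\tau_L$ comes from bounding the sum $N_k := \sum_{\sigma \in L_a,\,|\sigma|=k} \|T(\sigma)\|^q$ below by a single well-chosen term, while the lower bound on $\tau_L$ comes from bounding $N_k$ above term-by-term, using the uniform asymptotics in Lemmas \ref{dmin} and \ref{dmax}, combined with a count of the number of finite paths in $L_a$ of length $k$. Throughout, I will need to remember that $\log \rho_{\min} < 0$, so that dividing by $k\log \rho_{\min}$ reverses inequalities.

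For the upper bounds on $\tau_L$: in the case $q \geq 0$, by definition $d_{\min}^L = \inf_{\sigma \in L_a} \underline{d}(\sigma)$, so given $\varepsilon > 0$ I pick $\sigma \in L_a$ with $\underline{d}(\sigma) < d_{\min}^L + \varepsilon$. Then along a subsequence $k_j \to \infty$ one has $\|T(\sigma|k_j)\| \geq \rho_{\min}^{(d_{\min}^L + 2\varepsilon)k_j}$; raising to the $q$-th power (preserving $\geq$ since $q \geq 0$) and using $N_{k_j} \geq \|T(\sigma|k_j)\|^q$ yields $\tau_L(\mu,q) \leq q(d_{\min}^L + 2\varepsilon)$, whence $\tau_L \leq qd_{\min}^L$ after letting $\varepsilon \to 0$. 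For $q < 0$ the same idea works with $d_{\max}^L = \sup_{\sigma \in L_a} \overline{d}(\sigma)$: pick $\sigma$ with $\overline{d}(\sigma) > d_{\max}^L - \varepsilon$, obtain $\|T(\sigma|k_j)\| \leq \rho_{\min}^{(d_{\max}^L - 2\varepsilon)k_j}$ along a subsequence, and note that raising to a negative power reverses the inequality to give $\|T(\sigma|k_j)\|^q \geq \rho_{\min}^{q(d_{\max}^L - 2\varepsilon)k_j}$.

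For the lower bounds on $\tau_L$: for $q \geq 0$, Lemma \ref{dmin} gives the uniform bound $\|T(\sigma|k)\| \leq \rho_{\min}^{k(d_{\min}^L - \varepsilon)}$ for all $\sigma \in L_a$ with $|\sigma| \geq k \geq k_0$, so $\|T(\sigma|k)\|^q \leq \rho_{\min}^{qk(d_{\min}^L - \varepsilon)}$. The number of admissible paths of length $k$ in $L_a$ equals $\mathbf{1}^\top I_L^{k-1}\mathbf{1}$, which by Gelfand's formula is at most $C(\mathrm{sp}(I_L) + \varepsilon)^k$ for large $k$. Thus
\begin{equation*}
N_k \leq C(\mathrm{sp}(I_L) + \varepsilon)^k \, \rho_{\min}^{qk(d_{\min}^L - \varepsilon)};
\end{equation*}
taking $\log$, dividing by $k\log \rho_{\min} < 0$ (flipping the inequality), and letting $k \to \infty$ and then $\varepsilon \to 0$ gives $\tau_L \geq qd_{\min}^L - \log \mathrm{sp}(I_L)/|\log \rho_{\min}|$. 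For $q < 0$ the argument is identical, except I invoke Lemma \ref{dmax} to get $\|T(\sigma|k)\| \geq \rho_{\min}^{k(d_{\max}^L + \varepsilon)}$ uniformly, which for $q<0$ becomes $\|T(\sigma|k)\|^q \leq \rho_{\min}^{qk(d_{\max}^L + \varepsilon)}$; this is exactly where the stronger hypothesis that $L$ be a loop class (rather than merely contain one) is needed, which explains the discrepancy in hypotheses between the two halves of the proposition.

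The main obstacle is bookkeeping: there are many sign flips (because $\log \rho_{\min} < 0$ and, independently, because $q$ may be negative), so one must be careful at every step to check whether $x \mapsto x^q$ is increasing or decreasing and whether dividing by $k\log \rho_{\min}$ is preserving or reversing the inequality.
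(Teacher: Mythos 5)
Your proof is correct and follows essentially the same route as the paper's: a single near-extremal path in $L_a$ gives the upper bounds $\tau_L \leq qd_{\min}^L$ (resp. $qd_{\max}^L$), while the uniform estimates of Lemmas \ref{dmin} and \ref{dmax} combined with the path count $\mathbf{1}^\top I_L^{k-1}\mathbf{1} \leq C(\sp(I_L)+\varepsilon)^{k}$ give the lower bounds. Your observation that the loop-class hypothesis for $q<0$ is forced by Lemma \ref{dmax} matches the paper's closing remark, and the sign bookkeeping is handled correctly throughout.
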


\begin{proof}
Let $\varepsilon >0$ and assume $L=\{\chi _{1},\dots,\chi _{\left\vert
L\right\vert }\}$ is a loop class. By Lemma \ref{dmax}, for sufficiently
large $k$ and $\sigma \in L_{a}, \left\vert \sigma \right\vert =k$, we have 
\begin{equation*}
\left\Vert T(\sigma )\right\Vert \geq \rho _{\min}^{(d_{\max
}^{L}+\varepsilon )k}.
\end{equation*}
Furthemore, we can choose an infinite path $\alpha \in L_{a}$ such that $%
\overline{d}(\alpha )\geq d_{\max}^{L}-\varepsilon /2$, hence for infinitely
many $k_{j}$ we have 
\begin{equation}
\left\Vert T(\alpha |k_{j})\right\Vert \leq \rho _{\min}^{k_{j}(d_{\max
}^{L}-\varepsilon )}.  \label{al}
\end{equation}

Assume $\chi _{i}$ is the characteristic vector of a net interval $\Delta
_{i}$ of level $n_{i}$. Every $\sigma \in L_{a}$ with $\left\vert \sigma
\right\vert =k+1$ and beginning with letter $\chi _{i}$, determines a unique
descendent of $\Delta _{i}$ at level $n_{i}+k$ that has its characteristic
vector in $L$. The number of such $\sigma$ is the sum of the entries on row $%
i$ of the matrix $(I_{L})^{k}$. This is dominated by $\left\Vert
(I_{L})^{k}\right\Vert$ and hence by $(\sp(I_{L})+\varepsilon )^{k}$ for
large enough $k$ depending on $\varepsilon >0$.

Thus for $q<0$ and large $k$, 
\begin{equation*}
\sum_{\sigma \in L_{a},\left\vert \sigma \right\vert =k}\left\Vert T(\sigma
)\right\Vert ^{q}=\sum_{i=1}^{\left\vert L\right\vert }\sum_{\left\vert
\sigma \right\vert =k-1}\left\Vert T(\chi _{i},\sigma )\right\Vert ^{q}\leq
\rho _{\min}^{qk(d_{\max}^{L}+\varepsilon )}(\sp(I_{L})+\varepsilon )^{k},
\end{equation*}
while for $\alpha$ as in (\ref{al}) and infinitely many $k$, we have 
\begin{equation*}
\sum_{\sigma \in L_{a},\left\vert \sigma \right\vert =k}\left\Vert T(\sigma
)\right\Vert ^{q}\geq \left\Vert T(\alpha |k)\right\Vert ^{q}\geq \rho
_{\min}^{kq(d_{\max}^{L}-\varepsilon )}.
\end{equation*}
Taking logarithms, dividing by $k\log \rho _{\min}$ and letting $%
k\rightarrow \infty$ gives 
\begin{equation*}
\tau _{L}(q)=\liminf_{k\rightarrow \infty }\frac{\log \sum_{\sigma \in
L_{a},\left\vert \sigma \right\vert =k}\left\Vert T(\sigma )\right\Vert ^{q}%
} {k\log \rho _{\min}}\left\{ 
\begin{array}{cc}
\geq & q(d_{\max}^{L}+\varepsilon )-\frac{\log (\sp(I_{L})+\varepsilon )} {%
\left\vert \log \rho _{\min}\right\vert } \\ 
\leq & q(d_{\max}^{L}-\varepsilon )%
\end{array}
\right. ,
\end{equation*}
which proves the claim for $q<0$.

The argument for $q\geq 0$ is similar using Lemma \ref{dmin}, instead of
Lemma \ref{dmax}, and only requires $L$ to contain a loop class.
\end{proof}

It was proven by Ngai and Wang in \cite{NW} that $\dim _{B}K=\log (\sp%
(I_{\Omega }))/\left\vert \log \rho _{\min}\right\vert $. More generally,
the following is true.

\begin{lemma}
Suppose $L$ is a set of characteristic vectors containing a loop class and
let $J_{L}$ be the incidence matrix for the set of characteristic vectors
that have a descendent in $L$. The box dimension of $K_{L}$ exists and is
equal to 
\begin{equation*}
\dim _{B}K_{L}=\frac{\log \sp(J_{L})}{\left\vert \log \rho _{\min
}\right\vert }.
\end{equation*}
\end{lemma}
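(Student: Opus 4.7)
The plan is to reduce the box dimension of $K_L$ to counting the net intervals of level $n$ that intersect $K_L$, then identify these with admissible paths in a subgraph of the transition graph whose incidence matrix is exactly $J_L$.

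First, let $S$ denote the set of characteristic vectors having a descendent in $L$. I claim that a net interval $\Delta \in \mathcal{F}_n$ with symbolic representation $(\gamma_0, \dots, \gamma_n)$ meets $K_L$ if and only if $\gamma_n \in S$. The forward direction is immediate: any $x \in \Delta \cap K_L$ has a symbolic representation through $\Delta$ that eventually enters $L$, so $\gamma_n$ has a descendent in $L$. Conversely, if $\gamma_n \in S$, extend to a finite path reaching $L$, then continue indefinitely within $L$ (possible since $L$ contains a loop class); the resulting infinite path is the symbolic representation of some $x \in \Delta \cap K_L$. Moreover, if $\gamma_n \in S$ then every $\gamma_i$ with $i \leq n$ inherits $\gamma_n$'s descendent in $L$ as its own descendent, so $\gamma_i \in S$. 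Hence the count $N_n$ of net intervals of level $n$ meeting $K_L$ equals the number of length-$n$ admissible paths from $\gamma_0$ lying entirely in $S$, which is the $\gamma_0$-row sum of $(J_L)^n$.

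Since all net intervals of level $n$ have length comparable to $\rho_{\min}^n$ and have essentially disjoint interiors, they realize both an efficient cover and a packing of $K_L$ at scale $\rho_{\min}^n$. Consequently
\begin{equation*}
\dim_B K_L = \lim_{n \to \infty} \frac{\log N_n}{n |\log \rho_{\min}|},
\end{equation*}
provided the limit exists. By Gelfand's formula applied to the non-negative matrix $J_L$, we have $\|(J_L)^n\|^{1/n} \to \sp(J_L)$, yielding $N_n \leq C(\sp(J_L)+\varepsilon)^n$ for every $\varepsilon > 0$ and large $n$. For the matching lower bound, one uses the standard fact that the spectral radius of a non-negative matrix is attained on some strongly connected component of its directed graph; within the subgraph on $S$ this component is itself a loop class $L^* \subseteq S$ with $\sp(I_{L^*}) = \sp(J_L)$. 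Because every characteristic vector in $\Omega$ is reached from the root $\gamma_0$, there is a fixed admissible path from $\gamma_0$ into $L^*$; applying Perron--Frobenius to the irreducible matrix $I_{L^*}$ then produces at least $c(\sp(J_L)-\varepsilon)^n$ paths of length $n$ from $\gamma_0$ staying in $S$, which forces the matching lower bound and, in particular, the existence of the limit.

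The main obstacle is the lower bound: one must verify that $\gamma_0$ actually accesses the dominant strongly connected subgraph of $S$, and that although $J_L$ is typically reducible, its $n$-th power still grows like $\sp(J_L)^n$ on the row indexed by $\gamma_0$ up to sub-exponential factors. Both facts follow from standard Perron--Frobenius theory for reducible non-negative integer matrices, combined with the observation that every vertex in $S$ has a child in $S$ (since any descendent of $v$ in $L$ factors through some child of $v$ that itself lies in $S$), so the restricted graph has no dead ends. Once these ingredients are assembled, the computation of $\dim_B K_L$ reduces to the bookkeeping already outlined.
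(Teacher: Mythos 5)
Your proposal is correct, and it arrives at the same reduction as the paper: the number $N_n$ of level-$n$ net intervals meeting $K_L$ is (up to bounded multiplicative error) the $\gamma_0$-row sum of $(J_L)^n$, and the box dimension is the exponential growth rate of $N_n$ in the scale $\rho_{\min}^n$. Where you diverge is in how $\sp(J_L)$ is extracted from that count. The paper avoids Perron--Frobenius theory entirely: writing $\left\Vert J_L^k\right\Vert_1$ for the first-row sum, it notes that each row index $i$ corresponds to a vertex $\gamma_i$ reachable from $\gamma_0$ in $m_i$ steps within $S$, so every length-$k$ path from $\gamma_i$ in $S$ extends backwards to a length-$(k+m_i)$ path from $\gamma_0$ in $S$; this yields the squeeze $\left\Vert J_L^k\right\Vert_1 \leq \left\Vert J_L^k\right\Vert \leq C\left\Vert J_L^{k+M}\right\Vert_1$ with $M=\max_i m_i$, and Gelfand's formula then forces $\left\Vert J_L^k\right\Vert_1^{1/k}\to \sp(J_L)$ in one stroke, giving both bounds and the existence of the limit simultaneously. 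You instead take the upper bound from Gelfand and obtain the lower bound from the structure theory of reducible non-negative matrices: the spectral radius is attained on a strongly connected component $L^*\subseteq S$, which is reachable from $\gamma_0$ through $S$, and the positive Perron eigenvector of the irreducible matrix $I_{L^*}$ bounds the row sums of its powers below by $c\,\sp(J_L)^k$. Both arguments hinge on the same essential observation --- every vertex of $S$ is reachable from $\gamma_0$ by a path staying in $S$ --- but the paper's squeeze is more elementary and self-contained, whereas yours outsources the lower bound to standard Perron--Frobenius facts that you quote rather than prove. (One small point common to both write-ups: a net interval can meet $K_L$ at a shared endpoint whose $L$-eventual symbolic representation passes through the adjacent net interval, so the identification of $N_n$ with the path count holds only up to a bounded factor; this does not affect the growth rate.)
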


\begin{remark}
Note that $J_{L}$ above is not the incidence matrix of $L$. It may contain
characterstic vectors outside of $L$ that have descendents in $L$. It is
possible that $\sp(J_{L})>\sp(I_{L})$ as we show in Example \ref{ex:J vs I}.
\end{remark}

\begin{proof}
Given a matrix $J$, let $\left\Vert J\right\Vert _{i}=\sum_{j}\left\vert
J_{ij}\right\vert$, the sum of the modulos of the entries of row $i$ of $J$.

Without loss of generality, we can assume the entries of the first row of $%
J_{L}$ are determined by the children of $\gamma _{0}$ (with descendents in $%
L$). Then $\left\Vert \left( J_{L}\right) ^{k}\right\Vert _{1}$ is the
number of net intervals at level $k$ that contain an element of $K_{L}$.
This is an increasing function of $k$ and is comparable to the number of
disjoint balls of radius $c\rho _{\min}^{k}$ centred at points in $K_{L}$.

Furthermore, $\left\Vert J_{L}^{k}\right\Vert _{i}\leq \left\Vert
J_{L}^{k+m_{i}}\right\Vert _{1}$ if the entries of row $i$ of $J$ are
determined by the children of $\gamma _{i}$ where $\gamma _{i}$ is a
descendent of $\gamma _{0}$ at level $m_{i}$ which has a descendent in $L$.
Thus if $C$ is the number of rows of $J_{L}$ and $M=\max m_{i}$, we have 
\begin{equation*}
\left\Vert J_{L}^{k}\right\Vert _{1}\leq \left\Vert J_{L}^{k}\right\Vert
=\sum_{i}\left\Vert J_{L}^{k}\right\Vert _{i}\leq C\left\Vert
J_{L}^{k+M}\right\Vert _{1}.
\end{equation*}%
This shows $\lim_{k\rightarrow \infty }\left\Vert J_{L}^{k}\right\Vert
_{1}^{1/k}=\lim_{k}\left\Vert J_{L}^{k}\right\Vert ^{1/k}=\sp(J_{L})$ and
therefore 
\begin{equation*}
\frac{\log \sp(J_{L})}{\left\vert \log \rho _{\min }\right\vert }%
=\lim_{k\rightarrow \infty }\frac{\log \left\Vert J_{L}^{k}\right\Vert _{1}}{%
k\left\vert \log \rho _{\min }\right\vert }=\dim _{B}K_{L}.
\end{equation*}
\end{proof}

\begin{corollary}
\label{box} Let $L$ be a set of characteristic vectors containing a loop
class.

\begin{enumerate}
\item Then 
\begin{equation*}
qd_{\min}^{L}\geq \tau _{L}(\mu ,q)\geq qd_{\min}^{L}-\dim _{B}K_{L} \geq q
d_{\min}^L - 1 \text{ if }q\geq 0.
\end{equation*}
\label{box i}

\item Further, if $L$ is a loop class, then 
\begin{equation*}
qd_{\max }^{L}\geq \tau _{L}(\mu ,q)\geq qd_{\max }^{L}-\dim _{B}K_{L}\geq
qd_{\max }^{L}-1\text{ if }q<0\text{.}
\end{equation*}%
\label{box ii}

\item \label{box iv} For any $q\geq 0$, 
\begin{equation*}
qd_{\min }^{\Omega }\geq \tau (\mu ,q)\geq qd_{\min }^{\Omega }-\dim _{B}K%
\text{ }\geq qd_{\min }^{\Omega }-1.
\end{equation*}

\item If $|L|=1$ then \label{box iii} 
\begin{equation*}
qd_{\min }^{L}=qd_{\max }^{L}=\tau _{L}(\mu ,q)\text{ for all }q.
\end{equation*}
\end{enumerate}
\end{corollary}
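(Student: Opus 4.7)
The plan is to deduce parts (i)--(iii) from Proposition \ref{taubound} and the preceding lemma identifying $\dim_B K_L$, with the key auxiliary observation being a spectral-radius comparison $\sp(I_L)\le \sp(J_L)$; part (iv) is a short direct computation.

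First I would verify the inequality $\sp(I_L)\le \sp(J_L)$. Every vector of $L$ lies in the set of characteristic vectors having a descendent in $L$ (namely itself), and for $\chi_i,\chi_j\in L$ the entries $(I_L)_{ij}$ and $(J_L)_{ij}$ both simply record whether $\chi_j$ is a child of $\chi_i$. Hence, up to relabelling indices, $I_L$ is a principal submatrix of the nonnegative matrix $J_L$, and by standard Perron--Frobenius theory the spectral radius of a principal submatrix of a nonnegative matrix is bounded above by that of the ambient matrix. Dividing $\log\sp(I_L)\le\log\sp(J_L)$ by $|\log\rho_{\min}|$ and invoking the preceding lemma gives $\log\sp(I_L)/|\log\rho_{\min}|\le \dim_B K_L$.

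With this comparison in hand, parts (i) and (ii) are immediate: the upper bounds $qd_{\min}^L\ge \tau_L$ (for $q\ge 0$) and $qd_{\max}^L\ge \tau_L$ (for $q<0$) are already delivered by Proposition \ref{taubound}, while the lower bounds follow by combining that proposition with the above comparison. The final inequality $\dim_B K_L\le 1$ is automatic because $K_L\subseteq K\subseteq [0,1]$. Part (iii) is then part (i) applied with $L=\Omega$, using Theorem \ref{tauletauL} to write $\tau(\mu,q)=\tau_\Omega(\mu,q)$ and noting $K_\Omega=K$.

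For part (iv), let $L=\{\chi\}$. Since $L$ is a loop class, $\chi$ must be a child of itself, so for every $k$ the only admissible word $\sigma\in L_a$ of length $k$ is $(\chi,\chi,\dots,\chi)$, with transition matrix $T(\chi,\chi)^{k-1}$. Gelfand's formula $\lim_k\|T(\chi,\chi)^k\|^{1/k}=\sp(T(\chi,\chi))$ collapses the defining $\liminf$ to
\[
\tau_L(\mu,q)=\frac{q\log \sp(T(\chi,\chi))}{\log\rho_{\min}}.
\]
The only infinite path in $L_e^0$ is the periodic path with cycle $(\chi,\chi)$, so Example \ref{Ex:periodic} yields $d_{\min}^L=d_{\max}^L=\log\sp(T(\chi,\chi))/\log\rho_{\min}$, which, multiplied by $q$, matches the value of $\tau_L(\mu,q)$. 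The main (and really only) obstacle is the matrix-theoretic comparison $\sp(I_L)\le \sp(J_L)$; everything else is assembly of previously established results.
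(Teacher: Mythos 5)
Your proposal is correct and follows essentially the same route as the paper: parts (i)--(iii) are obtained exactly as in the text by combining Proposition \ref{taubound} with the box-dimension lemma through the comparison $\sp(I_L)\le \sp(J_L)$ (the paper derives this from $\Vert I_L^k\Vert \le \Vert J_L^k\Vert$ rather than citing Perron--Frobenius for principal submatrices, but it is the same submatrix observation). The only divergence is in part (iv), where you compute $\tau_L$ directly via Gelfand's formula while the paper simply notes $I_L=[1]$, so $\sp(I_L)=1$, and lets the two-sided bounds of Proposition \ref{taubound} collapse; both one-line arguments are valid.
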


\begin{proof}
Part \ref{box i} and \ref{box ii}. Let $J_{L}$ be as in the Lemma. Then
the incidence matrix of $L$, $I_{L}$, is a submatrix of $J_{L}$ and $%
\left\Vert I_{L}^{k}\right\Vert \leq \left\Vert J_{L}^{k}\right\Vert$ for
all $k$, so $\sp(I_{L})\leq \sp(J_{L})$.

Part \ref{box iv} is the special case of $L=\Omega $ in Part (i).

Part \ref{box iii}. Assume $L=\{\gamma \}$. We have 
\begin{equation*}
d_{\min }^{L}=d_{\max }^{L}=\frac{\log \sp(T(\gamma ,\gamma ))}{\log \rho
_{\min }}
\end{equation*}%
and $I_{L}=[1]$, so $sp(I_{L})=1$.
\end{proof}


We remark that it is possible to have $\sp(I_{L})<\sp(J_{L})$. Here is an
example.

\begin{example}
\label{ex:J vs I} Consider the finite type IFS with maps $S_{j}(x)=x/3+d_{j}$
where $d_{j}=0,4/9,5/9,2/3$, discussed in Example 3.10 of \cite{HHN}. There
are seven characteristic vectors: $\gamma _{0}$ with children $\gamma
_{i},i=0,1,\dots,5;$ $\gamma _{1}$ with child $\gamma _{0};$ $\gamma _{2}$
with children $\gamma _{1},\gamma _{2},\gamma _{3};$ $\gamma _{5}$ with
children $\gamma _{3},\gamma _{4},\gamma _{5};$ and $\gamma _{3},\gamma
_{4},\gamma _{6},\gamma _{7}$ all with children $\gamma _{3},\gamma
_{6},\gamma _{7}$.

The singleton $L=\{\gamma _{5}\}$ is a maximal loop class. Here $I_{L}=[1]$
with $\sp(I_{L})=1$, while 
\begin{equation*}
J_{L}= 
\begin{bmatrix}
1 & 1 & 1 & 1 \\ 
1 & 0 & 0 & 0 \\ 
0 & 1 & 1 & 0 \\ 
0 & 0 & 0 & 1%
\end{bmatrix}%
\end{equation*}
with $\sp(J_{L})=2$.
\end{example}

In \cite[Thm. 1.2]{FL} it is shown that for any self-similar measure
satisfying the weak separation property, $\tau (q)/q\rightarrow \inf \{%
\underline{\dim}_{\mathrm{loc}}\mu (x):x\}$ as $q\rightarrow \infty $. More
generally, we also can also immediately deduce the following by combining
Propositions \ref{dminlocdim}, \ref{P:dmax} and \ref{taubound}.

\begin{corollary}
\label{asy} Let $L$ be a set of characteristic vectors containing a loop
class.

\begin{enumerate}
\item We have $\tau _{L}(q)/q\rightarrow d_{\min}^{L}$ as $q\rightarrow
\infty $. If $K_{L}$ is relatively open, then 
\begin{equation*}
\tau _{L}(q)/q\rightarrow \inf \{\underline{\dim}_{\mathrm{loc}}\mu (x):x\in
K_{L}\} \text{as }q\rightarrow \infty .
\end{equation*}
\label{asy i}

\item Further, if $L$ is a loop class, then $\tau _{L}(q)/q\rightarrow
d_{\max }^{L}$ as $q\rightarrow -\infty $. If, in addition, $L$ admits an
interior path, then 
\begin{equation*}
\tau _{L}(q)/q\rightarrow \sup \{\dim _{\mathrm{loc}}\mu (x):x\in K_{L}\}%
\text{ as }q\rightarrow -\infty .
\end{equation*}%
\label{asy ii}
\end{enumerate}
\end{corollary}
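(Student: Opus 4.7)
The plan is essentially to divide the two-sided estimates of Proposition \ref{taubound} through by $q$ and let $|q|\to\infty$, and then upgrade $d_{\min}^L$ and $d_{\max}^L$ to infima/suprema of local dimensions via Propositions \ref{dminlocdim} and \ref{P:dmax} under the extra hypotheses. Concretely, for Part \ref{asy i} I would start from the inequality
\begin{equation*}
qd_{\min}^{L}\geq \tau _{L}(\mu ,q)\geq qd_{\min}^{L}-\frac{\log \sp(I_{L})}{|\log \rho _{\min}|}
\end{equation*}
valid for $q\geq 0$ (which holds whenever $L$ contains a loop class), divide by $q>0$, and observe that the error term is $O(1/q)$; letting $q\to\infty$ squeezes $\tau_L(q)/q$ to $d_{\min}^L$. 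When $K_L$ is relatively open, Proposition \ref{dminlocdim} gives $d_{\min}^L=\inf\{\underline{\dim}_{\mathrm{loc}}\mu(x):x\in K_L\}$, and the second assertion of Part \ref{asy i} follows.

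For Part \ref{asy ii} I would repeat the same argument with the loop-class bound
\begin{equation*}
qd_{\max}^{L}\geq \tau _{L}(\mu ,q)\geq qd_{\max}^{L}-\frac{\log \sp(I_{L})}{|\log \rho _{\min}|} \quad (q<0),
\end{equation*}
but now dividing by $q<0$ reverses the inequalities, yielding
\begin{equation*}
d_{\max}^{L}\leq \frac{\tau_L(q)}{q}\leq d_{\max}^{L}-\frac{\log \sp(I_{L})}{q|\log \rho _{\min}|}.
\end{equation*}
As $q\to-\infty$ the right-hand error again vanishes, forcing $\tau_L(q)/q\to d_{\max}^L$. If moreover $L$ admits an interior path, Proposition \ref{P:dmax}\ref{P:dmax i} identifies $d_{\max}^L$ with $\sup\{\dim_{\mathrm{loc}}\mu(x):x\in K_L\}$, which yields the refined limit.

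There is no real obstacle here: the corollary is essentially a direct repackaging of already-established facts, with the sole subtlety being to track the sign of $q$ when dividing through (so that in the $q\to-\infty$ case the chain of inequalities flips). I would write the proof as two short displays plus a one-line appeal to Propositions \ref{dminlocdim} and \ref{P:dmax} for the local-dimension reformulations, noting that the loop-class hypothesis in Part \ref{asy ii} is required precisely because Proposition \ref{taubound} only gives the $q<0$ bound when $L$ is a loop class, not merely when it contains one.
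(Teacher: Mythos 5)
Your proposal is correct and is precisely the argument the paper intends: the paper states the corollary follows immediately by combining Propositions \ref{dminlocdim}, \ref{P:dmax} and \ref{taubound}, and your dividing of the two-sided bounds of Proposition \ref{taubound} by $q$ (with the sign flip for $q<0$) followed by the identifications of $d_{\min}^{L}$ and $d_{\max}^{L}$ with the local-dimension extrema is exactly that combination. Your closing remark about why the loop-class hypothesis is needed in Part \ref{asy ii} is also accurate.
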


Here are some other facts about loop class $L^{q}$-spectra which will be
useful later in the paper.

\begin{proposition}
Let $E$ be the essential class and suppose $\Delta ^{(j)},j=1,\dots ,J$, are
distinct net intervals of level $N$ with characteristic vectors in $E$. Let $%
\mu _{j}=\mu |_{\Delta ^{(j)}}$. For any real number $q$, 
\begin{equation*}
\tau _{E}(\mu ,q)=\tau (\mu _{1}+\cdot \cdot \cdot +\mu _{J},q).
\end{equation*}%
Moreover, 
\begin{equation*}
\tau (\mu ,q)=\tau _{E}(\mu ,q)\text{ if }q\geq 0.
\end{equation*}
\end{proposition}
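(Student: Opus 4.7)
The proposition has two parts; I would treat them in turn.

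\textbf{Part 1: $\tau_{E}(\mu,q)=\tau(\nu,q)$ with $\nu:=\mu_{1}+\cdots+\mu_{J}$.} First, the proof of Proposition \ref{tauformula} carries over verbatim from $\mu$ to $\nu$ (the argument uses only the net-interval geometry and the lower mass bound \eqref{munet}, both of which are available on each $\Delta^{(j)}$), yielding $\tau(\nu,q)=\liminf_{n\to\infty}\frac{\log\sum_{\Delta_{n}\in\mathcal{F}_{n}}(\nu(\Delta_{n}))^{q}}{n\log\rho_{\min}}$. For $n\geq N$, each $\Delta_{n}$ is contained in a unique level-$N$ ancestor, so $\nu(\Delta_{n})=\mu(\Delta_{n})$ when that ancestor is some $\Delta^{(j)}$ and $\nu(\Delta_{n})=0$ otherwise (up to measure-zero boundary effects). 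Since $E$ is essential, these ``inside'' $\Delta_{n}$ are in bijection with admissible paths $\lambda=(\chi_{N}^{(j)},\chi_{N+1},\ldots,\chi_{n})$ in $E_{a}$ of length $n-N+1$, and by Lemma \ref{useful}(i) applied to the fixed prefix $T(\gamma_{0},\ldots,\chi_{N}^{(j)})$, the quantities $\mu(\Delta_{n})$ and $\|T(\lambda)\|$ are comparable with constants depending only on $j$ and $N$.

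It remains to compare $\sum_{j}\sum_{\lambda\in E_{a},\,|\lambda|=k,\,\lambda_{1}=\chi_{N}^{(j)}}\|T(\lambda)\|^{q}$ with the defining sum $\sum_{\sigma\in E_{a},\,|\sigma|=k}\|T(\sigma)\|^{q}$ for $\tau_{E}$. The first is trivially bounded by the second. For the reverse, I would use that $E$ is a loop class: for each $\chi\in E$ fix an admissible path $\eta_{\chi}\in E_{a}$ of bounded length from some $\chi_{N}^{(j(\chi))}$ to $\chi$; prepending $\eta_{\chi}$ injects paths starting at $\chi$ into paths starting at $\chi_{N}^{(j(\chi))}$ of controlled extra length, with comparable transition-matrix norms by Lemma \ref{useful}(i). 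Summing over the finitely many $\chi\in E$ gives the reverse comparison, and the bounded shift in length washes out in the $\liminf$.

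\textbf{Part 2: $\tau(\mu,q)=\tau_{E}(\mu,q)$ for $q\geq 0$.} The inequality $\tau\leq\tau_{E}$ is Theorem \ref{tauletauL}. For the reverse, take $\Delta^{(j)}$ to be all level-$N$ net intervals with characteristic vectors in $E$, so that $\tau_{E}(\mu,q)=\tau(\nu,q)$ by Part 1. Decomposing
\[
\sum_{\Delta\in\mathcal{F}_{n}}(\mu(\Delta))^{q}=\sum_{\Delta\subseteq\bigcup_{j}\Delta^{(j)}}(\mu(\Delta))^{q}+\sum_{\Delta\not\subseteq\bigcup_{j}\Delta^{(j)}}(\mu(\Delta))^{q},
\]
it suffices to show the second (off-essential) sum is at most $e^{o(n)}$ times the first. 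Off-essential $\Delta$'s correspond to symbolic paths with $\chi_{N}\notin E$; such a path either enters $E$ at some later step $m>N$ (after which it is trapped in $E$) or never does. In the ``enters at $m$'' case, Lemma \ref{useful}(i) absorbs the transient prefix and reduces the contribution to a $\tau_{E}$-type sum of length $n-m+1$; summing over the polynomially-many choices of $m$ and the finitely many prefix types yields at most a polynomial-in-$n$ multiple of the inside sum.

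The main obstacle is the ``never enters $E$'' case, in which the path is confined to a non-essential maximal loop class $L\subset\Omega\setminus E$ for its entire length. The corresponding contribution has growth rate $\tau_{L}(\mu,q)$, and by Proposition \ref{taubound}, $\tau_{L}(\mu,q)\geq qd_{\min}^{L}-\dim_{B}K_{L}$ for $q\geq 0$. To conclude one needs $\tau_{L}(\mu,q)\geq\tau_{E}(\mu,q)$ for every such $L$; this should follow from the fact that for $q\geq 0$ the dominant contribution to the $L^{q}$-spectrum comes from points of small lower local dimension, which are attained within $K_{E}$, together with the incidence/growth comparisons implicit in the finite-type structure. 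Establishing this rigorously, particularly the comparison of $d_{\min}^{L}$ and $\dim_{B}K_{L}$ with the corresponding essential-class quantities, is the delicate step of the argument.
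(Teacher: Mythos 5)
Your Part 1 is essentially the paper's argument: reduce $\tau(\mu_1+\cdots+\mu_J,q)$ to sums of $\Vert T(\gamma_j,\sigma)\Vert^q$ over $\sigma\in E_a$ via Proposition \ref{tauformula} and comparability, get one inequality by inclusion of index sets, and get the other by splicing in connector paths inside the loop class $E$ with constants controlled by Lemma \ref{useful}. That part is fine.

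Part 2 has a genuine gap. Your decomposition forces you to bound the contribution of paths that stay outside $E$ (or enter it late), and as you yourself note, this requires $\tau_L(\mu,q)\geq\tau_E(\mu,q)$ for every non-essential maximal loop class $L$ and $q\geq 0$. But in this paper that inequality is Corollary \ref{Cor:min}, which is \emph{deduced from} the proposition you are proving (its proof reads $\tau_L\geq\tau=\tau_E$ for $q\geq 0$); using it here would be circular, and your proposed independent justification does not close the loop. Proposition \ref{taubound} only gives $\tau_L(q)\geq qd_{\min}^L-\dim_BK_L$ and $\tau_E(q)\leq qd_{\min}^E$, so you would need $q(d_{\min}^L-d_{\min}^E)\geq\dim_BK_L$, which fails for small $q\geq0$ (and for all $q$ when $d_{\min}^L=d_{\min}^E$); moreover the claim that $d_{\min}^E$ is minimal among the $d_{\min}^L$ is itself obtained in the paper as a consequence of this proposition. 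There is also a uniformity problem in your ``enters at $m$'' case: the transient prefixes have length $m$ growing with $n$, there can be exponentially many of them, and the constants in Lemma \ref{useful}(i) depend on the full prefix matrix, not merely on finitely many ``types,'' so that case again reduces to the same unproved comparison.

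The paper sidesteps all of this: for $q\geq 0$ it invokes Feng's Lemma 5.3 of \cite{FengSmoothness}, which uses the self-similarity $\mu=\sum_jp_j\mu\circ S_j^{-1}$ to map the entire level-$n$ packing into a single net interval $\Delta^{(1)}$ by a fixed contraction $S_\omega$ with $S_\omega[0,1]\subseteq\Delta^{(1)}$, giving $\sum_{\Delta\in\mathcal{F}_n}\mu(\Delta)^q\leq C\,p_\omega^{-q}\sum_{\Delta'\subseteq\Delta^{(1)}}\mu(\Delta')^q$ for $q\geq 0$ and hence $\tau(\mu,q)=\tau(\mu_1,q)=\tau_E(\mu,q)$. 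Some such global self-similarity input is needed; a purely combinatorial comparison of loop classes, as in your sketch, does not suffice.
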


\begin{proof}
We first note that since the net intervals $\Delta ^{(j)}$ are disjoint,
similar arguments to the proof of Proposition \ref{tauformula} show that 
\begin{equation*}
\tau (\mu _{1}+\cdot \cdot \cdot +\mu _{J},q)=\liminf_{k\rightarrow \infty } 
\frac{\log \sum_{j=1}^{J}\sum_{\Delta \subseteq \Delta ^{(j)},\Delta \in 
\mathcal{F}_{k}}(\mu _{j}(\Delta ))^{q}}{k\log \rho _{\min}}.
\end{equation*}

Suppose $\Delta ^{(j)}$ has symbolic representation $\delta ^{(j)}$, with
last letter $\gamma _{j}\in E$. The usual comparability arguments give 
\begin{eqnarray}
\tau (\mu _{1}+\cdot \cdot \cdot +\mu _{J},q) &=&\liminf_{k\rightarrow
\infty }\frac{\log \sum_{j=1}^{J}\sum_{|\sigma |=k}\left\Vert T(\delta
^{(j)},\sigma )\right\Vert }{k\log \rho _{\min}}^{q}  \label{Delta1} \\
&=&\liminf_{k\rightarrow \infty }\frac{\log \sum_{j=1}^{J}\sum_{|\sigma
|=k}\left\Vert T(\gamma _{j},\sigma )\right\Vert ^{q}}{k\log \rho _{\min}} 
\notag
\end{eqnarray}
As $\gamma _{j}\in E$, the admissible paths $\sigma$ appearing in the sum
belong to $E_{a}$ and therefore 
\begin{equation*}
\sum_{|\sigma |=k}\left\Vert T(\gamma _{j},\sigma )\right\Vert ^{q}\leq
\sum_{|\sigma |=k+1,\sigma \in E_{a}}\left\Vert T(\sigma )\right\Vert ^{q} 
\text{.}
\end{equation*}
It follows that 
\begin{equation*}
\tau (\mu _{1}+\cdot \cdot \cdot +\mu _{J},q)\geq \liminf_{k\rightarrow
\infty }\frac{\log \sum_{|\sigma |=k,\sigma \in E_{a}}\left\Vert T(\sigma
)\right\Vert ^{q}}{k\log \rho _{\min}}=\tau _{E}(\mu ,q).
\end{equation*}

The proof that $\tau _{E}(\mu ,q)\geq$ $\tau (\mu _{1}+\cdot \cdot \cdot
+\mu _{J},q)$ uses arguments similar to those used in the proof of Theorem %
\ref{tauletauL}. Assume $E=\{\chi _{1},\dots,\chi _{\left\vert E\right\vert
}\} $. As every $\sigma \in E_{a}$ will begin with one of the $\chi _{i}$,
we can write 
\begin{equation*}
\sum_{|\sigma |=k,\sigma \in E_{a}}\left\Vert T(\sigma )\right\Vert
^{q}=\sum_{i=1}^{\left\vert E\right\vert }\sum_{|\sigma |=k-1,\sigma \in
E_{a}}\left\Vert T(\chi _{i},\sigma )\right\Vert ^{q}.
\end{equation*}

We continue to assume $\Delta ^{(1)}$ has symbolic representation $\delta
^{(1)}$ (with final letter in $E$). For each $i=1,\dots,\left\vert
E\right\vert$, choose a path $\lambda _{i}\in E_{a}$ linking $\delta ^{(1)}$
with $\chi _{i}$. Assume $\left\vert \lambda _{i}\right\vert =n_{i}$. The
definition of $\tau (\mu _{1})$ established in (\ref{Delta1}) implies that
for each $\varepsilon >0$ and all $k\geq k_{0}(\varepsilon )$ we have 
\begin{eqnarray*}
\sum_{|\sigma |=k-1,\sigma \in E_{a}}\left\Vert T(\chi _{i},\sigma
)\right\Vert ^{q} &\leq &C_{i}\sum_{|\sigma |=k-1,\sigma \in
E_{a}}\left\Vert T(\delta ^{(1)},\lambda _{i},\chi _{i},\sigma )\right\Vert
^{q} \\
&\leq &C_{i}\sum_{|\sigma |=k+n_{i},\sigma \in E_{a}}\left\Vert T(\delta
^{(1)},\sigma )\right\Vert ^{q}\leq C_{i}^{\prime }\rho _{\min}^{k(\tau (\mu
_{1},q)-\varepsilon )}.
\end{eqnarray*}
Consequently, $\tau _{E}(\mu ,q)\geq \tau (\mu _{1},q)$.

It is easy to see that $\tau (\mu _{1},q)\geq \tau (\mu _{1}+\cdot \cdot
\cdot +\mu _{J},q)$ and hence $\tau _{E}(\mu ,q)=\tau (\mu _{1}+\cdot \cdot
\cdot +\mu _{J},q)$ for all $q$.

Finally, we note that in \cite[Lemma 5.3]{FengSmoothness}, Feng shows (in
the equicontractive case, but the same argument works in general) that for $%
q\geq 0$, $\tau (\mu ,q)=\tau (\mu _{1},q)$. Hence $\tau (\mu ,q)=\tau
_{E}(\mu ,q)$ for all $q\geq 0$.
\end{proof}

\begin{corollary}
\label{Cor:min}For $q\geq 0$, the function $\tau _{L}(\mu ,q)$ is minimized
over all maximal loop classes $L$ at $L$ equal to the essential class $E$.
Likewise, the minimum value of $d_{\min}^{L}$ is attained at $L=E$.
\end{corollary}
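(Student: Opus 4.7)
The plan is to deduce the corollary directly from the preceding proposition, Theorem \ref{tauletauL}, and the asymptotic result Corollary \ref{asy}(i); the assertion is essentially a repackaging of these facts, so no real obstacle is expected.

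First, I would observe that the essential class $E$ is itself a maximal loop class. Indeed, $E$ is a loop class by definition, and if $L' \supsetneq E$ were a loop class, there would exist $\beta \in L'\setminus E$ and an admissible path in $L'$ from $\beta$ back to some $\alpha \in E$; but once such a path enters $E$, the defining property of the essential class (every child of a vector in $E$ lies in $E$) forces all subsequent vectors to stay in $E$, contradicting $\beta \notin E$.

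Next, I would combine the two main inputs. The preceding proposition gives $\tau(\mu,q) = \tau_E(\mu,q)$ for all $q \geq 0$, while Theorem \ref{tauletauL} gives $\tau(\mu,q) \leq \tau_L(\mu,q)$ for every maximal loop class $L$. Concatenating these yields
\begin{equation*}
\tau_E(\mu,q) \;=\; \tau(\mu,q) \;\leq\; \tau_L(\mu,q) \qquad\text{for every maximal loop class } L \text{ and every } q \geq 0.
\end{equation*}
Since $E$ itself is a maximal loop class by the first step, this shows the minimum of $\tau_L(\mu,q)$ over maximal loop classes is attained at $L = E$, proving the first assertion.

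For the second assertion, I would apply Corollary \ref{asy}\ref{asy i}, which states $\tau_L(q)/q \to d_{\min}^L$ as $q \to \infty$ for any $L$ containing a loop class. Dividing the inequality $\tau_E(\mu,q) \leq \tau_L(\mu,q)$ by $q > 0$ and letting $q \to \infty$ gives $d_{\min}^E \leq d_{\min}^L$ for every maximal loop class $L$, as required.
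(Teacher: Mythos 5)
Your proof is correct and follows essentially the same route as the paper: $\tau_E(\mu,q)=\tau(\mu,q)\le\tau_L(\mu,q)$ for $q\ge 0$ by the preceding proposition together with Theorem \ref{tauletauL}, and then dividing by $q$ and letting $q\to\infty$ via Corollary \ref{asy} compares the $d_{\min}^{L}$. The only quibble is in your (extra) verification that $E$ is a maximal loop class, which the paper takes for granted: the contradiction should be drawn from the admissible path running from $\alpha\in E$ to $\beta\notin E$ (such a path exists by the loop-class definition and cannot leave $E$ since every child of a vector in $E$ lies in $E$), rather than from the path from $\beta$ to $\alpha$, whose entry into $E$ only at its terminal segment contradicts nothing.
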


\begin{proof}
The previous work shows that for all $q\geq 0$, $\tau _{L}(\mu ,q)\geq \tau
(\mu ,q)=\tau _{E}(\mu ,q)$ and that $\tau _{L}(\mu ,q)/q\rightarrow d_{\min
}^{L}$ as $q\rightarrow \infty$ for any loop class $L$.
\end{proof}

\section{The $L^{q}$-spectrum of finite type measures}

\label{S:Main}

\subsection{Main Theorem}

Throughout this section we continue to assume $\mu$ is a self-similar
measure of finite type. We conjecture that for all real $q$, that $\tau (\mu
,q)=\min_{L}\tau _{L}(\mu ,q)$, where the minimum is taken over all the
maximal loop classes $L$. Indeed, we have already seen in Theorem \ref%
{tauletauL} that $\tau (q)\leq \min_{L}\tau _{L}(q)$.

In this section, we will prove the conjecture under additional assumptions,
which we will see later are satisfied by many examples, including the
(uniform or biased) Bernoulli convolutions with contraction factor the
inverse of a simple Pisot number and the $(m,d)$-Cantor measures with $m\geq
d$.

It is convenient to introduce further terminology: We will say that two
maximal loop classes, $L_{1}$ and $L_{2}$, are adjacent if there is a path
from $L_{1}$ to $L_{2}$ that does not pass through any other maximal loop
class. To be more precise, we mean there is a path $\chi _{1},\dots,\chi
_{s} $ with $\chi _{1}\in L_{1}, \chi _{s}\in L_{2}$ and $\chi _{i}$ not in
a maximal loop class for any $i\neq 1,s$. Any such path will be called a 
\textbf{transition path} from $L_{1}$ to $L_{2}$. The fact that there are
only finitely many characteristic vectors ensures there are only finitely
many transition paths.

Recall that we write $E$ for the essential class.

\begin{theorem}
\label{main}Let $\mu$ be any self-similar measure of finite type. We have 
\begin{equation*}
\tau (\mu ,q)=\min_{L}\tau _{L}(\mu ,q)=\tau _{E}(\mu ,q)\text{ for }q\geq 0,
\end{equation*}
where the minimum is over all maximal loop classes $L$ (including $E$) and $%
E $ denotes the essential class.

If the transition matrices of all transition paths are positive matrices,
then 
\begin{equation*}
\tau (\mu ,q)=\min_{L}\tau _{L}(\mu ,q)\text{ for }q<0.
\end{equation*}
\end{theorem}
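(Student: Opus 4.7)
The plan is to establish the reverse inequality $\tau(\mu,q) \ge \min_L \tau_L(\mu,q)$ for $q<0$, since Theorem \ref{tauletauL} already supplies the opposite bound. Write $\tau_{\min}(q) := \min_L \tau_L(\mu,q)$ over all maximal loop classes $L$. By Proposition \ref{tauformula}, it suffices to show that $S_n := \sum_{\sigma \in \Omega_a^0,\,|\sigma|=n}\|T(\sigma)\|^q$ satisfies $S_n \le p(n)\,\rho_{\min}^{n(\tau_{\min}(q)-\varepsilon)}$ for all large $n$, with $p(n)$ polynomial (recall $\log\rho_{\min}<0$, so bounding $S_n$ above controls $\tau$ from below).

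Because a maximal loop class cannot be re-entered after being left, each admissible path $\sigma=(\gamma_0,\dots,\gamma_n)$ has a unique decomposition
\[
\sigma = \pi_0\,\alpha_1\,\pi_1\,\alpha_2\cdots\pi_{s-1}\,\alpha_s,
\]
where $\alpha_i \in (L_i)_a$ is the trace of $\sigma$ in the $i$th maximal loop class $L_i$ visited, $\pi_0$ is the (possibly trivial) initial segment from $\gamma_0$ into $L_1$, and $\pi_j$ ($1\le j\le s-1$) is a transition path from $L_j$ to $L_{j+1}$. Since the total number of characteristic vectors is finite, $s$, the collection of possible skeletons $(L_1,\pi_0,\pi_1,\dots,\pi_{s-1},L_s)$, and all $|\pi_j|$ are bounded by a constant depending only on the IFS.

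Under the hypothesis that each transition-path matrix $T(\pi_j)$, $j\ge 1$, is positive, iterated application of Lemma \ref{useful}\ref{useful ii} to the middle factors $T(\pi_j)$, followed by Lemma \ref{useful}\ref{useful i} for the initial factor $T(\pi_0)$, yields a constant $c>0$ (depending only on the finitely many skeletons) such that
\[
\|T(\sigma)\| \;\ge\; c\,\prod_{i=1}^s \|T(\alpha_i)\|.
\]
Since $q<0$, this reverses to $\|T(\sigma)\|^q \le c^q \prod_i \|T(\alpha_i)\|^q$. Fixing a skeleton and a composition $(k_1,\dots,k_s)$ of $n - \sum|\pi_j|$, and dropping the end-point compatibility constraints on the $\alpha_i$ (which only enlarges a sum of positive terms), one obtains
\[
\sum_{(\alpha_i)}\|T(\sigma)\|^q \;\le\; c^q \prod_{i=1}^s\!\!\sum_{\substack{\alpha\in(L_i)_a\\|\alpha|=k_i}}\!\!\|T(\alpha)\|^q.
\]
By the defining liminf of $\tau_{L_i}(\mu,q)$, for any $\varepsilon>0$ and all $k_i$ beyond a uniform $k_0(\varepsilon)$ the inner sum is at most $\rho_{\min}^{k_i(\tau_{L_i}(q)-\varepsilon)} \le \rho_{\min}^{k_i(\tau_{\min}(q)-\varepsilon)}$; for small $k_i$ a uniform constant bound (using finiteness of short paths) suffices. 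Multiplying and then summing over the polynomially-many compositions and the finitely-many skeletons produces the required estimate $S_n \le p(n)\rho_{\min}^{(n-O(1))(\tau_{\min}(q)-\varepsilon)}$, and taking $\log$, dividing by $n\log\rho_{\min}$, and letting $n\to\infty$ and $\varepsilon\to 0$ concludes $\tau(\mu,q)\ge\tau_{\min}(q)$.

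The main obstacle is the combinatorial bookkeeping: carefully justifying the unique skeleton decomposition, controlling the contribution of short $\alpha_i$ for which the asymptotic bound on $\tau_{L_i}$ fails, and handling the initial segment $\pi_0$ (which is not a transition path between loop classes, but which ranges over finitely many paths and so can be absorbed by Lemma \ref{useful}\ref{useful i}). The positivity hypothesis is used in exactly one place, but it is essential: it is what allows the multiplicative lower bound on $\|T(\sigma)\|$ that decouples the contributions of distinct loop classes.
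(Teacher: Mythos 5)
Your overall strategy is the same as the paper's: decompose each admissible path into loop-class segments separated by transition paths, use the positivity hypothesis together with Lemma \ref{useful} \ref{useful ii} to obtain $\Vert T(\sigma )\Vert \geq c\prod_{i}\Vert T(\alpha _{i})\Vert $, reverse this for $q<0$, and sum over the polynomially many compositions and finitely many skeletons; for $q\geq 0$ the paper likewise just invokes Theorem \ref{tauletauL} and Corollary \ref{Cor:min}.

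There is, however, one concrete gap in your decomposition. You assert that every admissible path ends inside a maximal loop class, i.e. $\sigma =\pi _{0}\alpha _{1}\pi _{1}\cdots \pi _{s-1}\alpha _{s}$ with $\alpha _{s}\in (L_{s})_{a}$ and each $\pi _{j}$ a \emph{full} transition path. This is false in general: a finite path may terminate at a characteristic vector lying in no maximal loop class (for instance at $\gamma _{3}$ or $\gamma _{14}$ in Example \ref{ExSpecial}), i.e. partway through a transition. The paper therefore allows a trailing factor $\beta _{\ell +1}$ that is only a prefix of a transition path, and this trailing factor cannot be absorbed the way you absorb $\pi _{0}$: Lemma \ref{useful} \ref{useful i} controls a \emph{left} factor ($a_{1}\Vert C\Vert \leq \Vert AC\Vert $), whereas here the unknown matrix $T(\alpha _{s})$ sits on the left and the fixed one on the right, and transition matrices may have zero rows, so $\Vert T(\alpha _{s})T(\beta )\Vert \geq c\Vert T(\alpha _{s})\Vert $ is not automatic. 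The paper's fix is to complete the prefix $\beta $ to a full transition path $\beta ^{+}$, use positivity of $T(\beta ^{+})$ and Lemma \ref{useful} \ref{useful ii} to get $\Vert T(\alpha _{s})T(\beta )\Vert \cdot \Vert T(\text{completion})\Vert \geq b\Vert T(\alpha _{s})\Vert $, and then bound $\Vert T(\text{completion})\Vert $ above over the finitely many completions. With this (routine but necessary) addition, your argument coincides with the paper's proof.
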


\begin{proof}
In Theorem \ref{tauletauL} and Corollary \ref%
{Cor:min}, we have already seen that $\tau (q)=\min_{L}\tau _{L}(q)=\tau
_{E}(q)$ for $q\geq 0$ and that $\tau (q)\leq \min_{L}\tau _{L}(q)$ for all $%
q$. Thus we need only prove the reverse inequality for $q<0~$under the
additional assumption, and that requires finding upper bounds on $%
\sum_{\left\vert \sigma \right\vert =k, \sigma \in \Omega^0}\left\Vert T(\sigma )\right\Vert ^{q}$%
. To do this, we observe that each such $\sigma $ can be factored as $(\beta
_{1}^{-},\lambda _{1}^{-},\beta _{2}^{-},\lambda _{2}^{-},\dots ,\lambda
_{\ell }^{-},\beta _{\ell +1})$ where

\begin{itemize}
\item $\beta _{1}=\beta _{1}^{(1)}\dots \beta _{1}^{(j_{1})}$ is the path
from $\gamma _{0}$ to the first maximal loop class. Here $\beta
_{1}^{(j_{1})}$ is in the (first) maximal loop class associate with $\lambda
_{1}$. This may be a singleton if $\gamma _{0}$ is within the loop class
associated with $\lambda _{1}$.

\item $\lambda_i = \lambda_i^{(1)} \dots \lambda_i^{(k_i)}$ is path
contained within a single loop class.

\item For $i=2,...,\ell $, $\beta _{i}=\beta _{i}^{(1)}\dots \beta
_{i}^{(j_{i})}$ is a transition path from the loop class given by $\lambda
_{i-1}$ to the loop class given by $\lambda _{i}$. Here $\beta _{i}^{(1)}$
is in the loop class associated with $\lambda _{i-1}$ and $\beta
_{i}^{(j_{i})}$ is in the loop class associated with $\lambda _{i}$.

\item $\beta_{\ell+1} = \beta_{\ell+1}^{(1)} \dots
\beta_{\ell_1}^{(j_{\ell+1})}$ is a prefix of a transition path from the
loop class given by $\lambda_\ell$. This may be a singleton if the last
characteristic vector in $\sigma$ is in a loop class.

\item We have $\beta_i^{(j_i)} = \lambda_{i}^{(1)}$ and $\lambda_i^{(k_i)} =
\beta_{i+1}^{(1)}$.
\end{itemize}

With this notation we have 
\begin{equation*}
T(\sigma )=T(\beta _{1})T(\lambda _{1})\dots T(\beta _{\ell })T(\lambda
_{\ell })T(\beta _{\ell +1}).
\end{equation*}%
Here we understand $T(\beta _{i})$ or $T(\lambda _{i})$ to be 1 if they
contained only one characteristic vector.

We notice that if there is a transition path between two maximal loop
classes, then there is no path (transition or otherwise) going the other
direction between these loop classes. Hence, as there are only a finite
number of maximal loop classes we see that $\ell$ is bounded above by the
number of maximal loop classes.

We notice that there are only a finite number of initial and transition
paths, and hence a finite number of $\beta _{i}$. We further note that, by
assumption, $T(\beta _{i})$ is a positive matrix for $i=2,\dots ,\ell $. We
always have that $T(\beta _{1})$ is a positive matrix with row dimension 1.
Hence by Lemma \ref{useful} (ii) we get 
\begin{align}
\Vert T(\sigma )\Vert & \geq \Vert T(\beta _{1})T(\lambda
_{1})\dots T(\beta _{\ell })T(\lambda _{\ell })T(\beta _{\ell +1})\Vert 
\notag \\
& \geq C\Vert T(\lambda _{1})\Vert \Vert T(\lambda _{2})\Vert \dots \Vert
T(\lambda _{\ell -1})\Vert \Vert T(\lambda _{\ell })T(\beta _{\ell +1})\Vert
\label{eq:partial decomp}
\end{align}

Recall $\beta _{\ell +1}=\beta _{\ell +1}^{(1)}\dots \beta _{\ell
+{1}}^{(j_{\ell +1})}$ is a prefix of a transition path. If $\beta _{\ell
+1} $ is a singleton, then we obtain directly that $\Vert T(\lambda _{\ell
})T(\beta _{\ell +1})\Vert =\Vert T(\lambda _{\ell })\Vert $. Assume that it
is not a singleton. Let $\beta _{\ell +1}^{+}=\beta _{\ell +1}^{(1)}\dots
\beta _{\ell _{1}}^{(j_{\ell +1})}\dots \beta _{\ell _{1}}^{(J_{\ell +1})}$
be a transition path for which it is a prefix. We see that 
\begin{align*}
\Vert T(\lambda _{\ell })T(\beta _{\ell +1})\Vert \Vert T(\beta _{\ell
+1}^{(j_{\ell +1})}\dots \beta _{\ell +1}^{(J_{\ell +1})})\Vert & \geq \Vert
T(\lambda _{\ell })T(\beta _{\ell +1})T(\beta _{\ell +1}^{(j_{\ell
+1})}\dots \beta _{\ell +1}^{(J_{\ell +1})})\Vert \\
& \geq \Vert T(\lambda _{\ell })T(\beta _{\ell +1}^{+})\Vert \\
& \geq b\Vert T(\lambda _{\ell })\Vert
\end{align*}%
There are only finitely many such completions, hence $\Vert T(\beta _{\ell
+1}^{(j_{\ell +1})}\dots \beta _{\ell +1}^{(J_{\ell +1})}\Vert $ is bounded
from below. Combining this observation with equation 
\eqref{eq:partial decomp} 
gives us that $\Vert T(\sigma )\Vert \geq C\prod_{i=1}^{\ell }\Vert
T(\lambda _{i})\Vert $ for some (new) constant $C$, or equivalently 
\begin{equation}
\Vert T(\sigma )\Vert ^{q}\leq C^{q}\prod_{i=1}^{\ell }\Vert T(\lambda
_{i})\Vert ^{q}.  \label{C}
\end{equation}

Let $\kappa $ be the number of possible tuples $(\beta _{1},\beta _{2},\dots
,\beta _{\ell +1})$. We see that $\kappa $ is finite as there are a finite
number of transition paths and $\ell $ is bounded above. 
Let $C$ be a renamed constant, the minimum value of $C^{q}$ taken over all
such tuples. 
Let $L_1, \dots, L_N$ be the complete set of loop classes (including the 
    essential class).

With this notation we have 
\begin{align*}
\sum_{|\sigma |=n}\Vert T(\sigma )\Vert ^{q}
    & \leq \sum_{(\beta _{1},\dots,\beta _{\ell +1})}\left( \sum_{|\lambda _{1}|+\dots +|\lambda _{\ell }|\leq
n}\Vert T(\beta _{1})T(\lambda _{1})\dots T(\lambda _{\ell })T(\beta
_{\ell +1})\Vert ^{q}\right) \\
    & \leq C\sum_{(\beta _{1},\dots ,\beta _{\ell +1})} \left(
    \sum_{i_1 + \dots + i_N \leq n} 
    \sum_{\substack{|\lambda_1| = i_1\\ \lambda_1 \in L_1}}  \dots
    \sum_{\substack{|\lambda_N| = i_N\\ \lambda_N \in L_N}} 
    \Vert T(\lambda _{1})\Vert ^{q}\dots \Vert T(\lambda _{N})\Vert^{q}\right)\\
    & \leq C \kappa
    \sum_{i_1 + \dots + i_N \leq n} 
   \left(\sum_{\substack{|\lambda_1|=i_1 \\ \lambda_1\in L_1}}\Vert T(\lambda_1)\Vert^q\right)
   \dots
   \left(\sum_{\substack{|\lambda_N|=i_N \\ \lambda_N\in L_N}}\Vert T(\lambda_N)\Vert^q\right)
\end{align*}

Fix $\varepsilon >0$ and let 
\begin{equation*}
\theta (q):=\min_{L}\tau _{L}(q).
\end{equation*}%
By the definition of $\tau _{L}$, there is some $n_{L}$ such that for all
paths in $L$ with length at least $n\geq n_{L}$, 
\begin{equation*}
\sum_{\lambda \in L_{a},\left\vert \lambda \right\vert =n}\left\Vert
T(\lambda )\right\Vert ^{q}\leq \rho _{\min }^{n(\tau _{L}(q)-\varepsilon
)}\leq \rho _{\min }^{n(\theta (q)-\varepsilon )}.
\end{equation*}%
As there are only finitely many paths of length less than $n_{L}$ and only
finitely many maximal loop classes, there is some constant $C_{\varepsilon }$
such that 
\begin{equation*}
\sum_{\lambda \in L_{a},\left\vert \lambda \right\vert =n}\left\Vert
T(\lambda )\right\Vert ^{q}\leq C_{\varepsilon }\rho _{\min }^{n(\theta
(q)-\varepsilon )}
\end{equation*}%
for all $n$ and for all maximal loop classes $L$. Thus 
\begin{align*}
\sum_{|\sigma|=n }\Vert T(\sigma )\Vert ^{q}
    & \leq C \kappa
    \sum_{i_1 + \dots + i_N \leq n} 
   \left(\sum_{\substack{|\lambda_1|=i_1 \\ \lambda_1\in L_1}}\Vert T(\lambda_1)\Vert^q\right)
   \dots
 \left(\sum_{\substack{|\lambda_N|=i_N\\ \lambda_N\in L_N}}\Vert T(\lambda_N)\Vert^q\right)\\
    & \leq C \kappa
    \sum_{i_1 + \dots + i_N \leq n} 
    \left( C_\epsilon \rho_{\min}^{i_1 (\theta(q)-\epsilon)} \right)
    \left( C_\epsilon \rho_{\min}^{i_2 (\theta(q)-\epsilon)} \right) \dots
    \left( C_\epsilon \rho_{\min}^{i_N (\theta(q)-\epsilon)} \right) \\
    & \leq C \kappa C_\epsilon^N \sum_{i_1 + \dots + i_N \leq n} 
      \rho_{\min}^{n (\theta(q)-\epsilon)} \\
    & \leq  C \kappa C_\epsilon^N n^N 
      \rho_{\min}^{n (\theta(q)-\epsilon)} \\
    & \leq  C' \rho_{\min}^{n (\theta(q)-2\epsilon)} 
\end{align*}
Recall here that $\theta(q) < 0$ when $q < 0$.

Taking the logarithm, dividing by $n\log \rho _{\min }$ and letting $%
n\rightarrow \infty $, we deduce that 
\begin{equation*}
\tau (q)=\liminf_{n}\frac{\log \sum_{\left\vert \sigma \right\vert
=n}\left\Vert T(\sigma )\right\Vert ^{q}}{n\log \rho _{\min }}\geq \theta
(q)-\varepsilon \text{.}
\end{equation*}%
As this holds for all $\varepsilon >0$, we have $\tau (q)\geq \theta
(q)=\min_{L}\tau _{L}(q)$, as we desired to show.
\end{proof}

\begin{remark}
We remark that while the values of the entries of the transition matrices
associated with transition paths are a function of the probabilities
associated with the self-similar measure, the property of being positive
matrices depends only on the finite type structure of the IFS. Thus Theorem %
\ref{main} will apply to every self-similar measure associated with an IFS
which satisfies the hypotheses.
\end{remark}

\subsection{Consequences of the Theorem}

For this subsection, we will let 
\begin{equation*}
d=\max \{d_{\max}^{L}:L\text{ maximal loop class, }L\neq E\}.
\end{equation*}

\begin{proposition}
\label{summary}Let $\mu$ be any self-similar measure of finite type. Suppose
that every maximal loop class other than the essential class is a singleton
and that the primitive transition matrices associated with these singleton
loop classes are scalars. Then $\tau (\mu ,q)=\min_{L}\tau _{L}(\mu ,q)$ for
all $q$ and consequently, 
\begin{equation*}
\tau (\mu ,q)=\left\{ 
\begin{array}{cc}
\tau _{E}(\mu ,q) & \text{if }q\geq 0 \\ 
\min \{qd,\tau _{E}(\mu ,q)\} & \text{if }q<0%
\end{array}
\right. .
\end{equation*}
\end{proposition}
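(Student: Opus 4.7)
The plan is to reduce Proposition \ref{summary} to Theorem \ref{main} by verifying its positivity hypothesis under the stated assumptions, and then to unpack $\min_L \tau_L(\mu,q)$ using what Corollaries \ref{box} and \ref{Cor:min} already tell us about singleton loop classes and the essential class. Once Theorem \ref{main} applies, the identity $\tau(\mu,q) = \min_L \tau_L(\mu,q)$ holds for every $q$, and evaluating the minimum separately on $q \geq 0$ and $q < 0$ gives the claimed formula.

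The key observation for positivity is that a singleton maximal loop class $L = \{\gamma\}$ whose primitive transition matrix $T(\gamma,\gamma)$ is a scalar must correspond to a characteristic vector $\gamma$ whose neighbour set has exactly one element. Hence for any admissible child $\chi$ of $\gamma$, the primitive transition matrix $T(\gamma,\chi)$ is a $1 \times J$ row; since each column of a transition matrix must contain a strictly positive entry and this matrix has only one row, every entry of $T(\gamma,\chi)$ is strictly positive. Because $E$ is essential, no transition path can originate in $E$, so every transition path $\beta = (\chi_1,\dots,\chi_s)$ begins in a non-essential singleton loop class. Multiplying the positive row $T(\chi_1,\chi_2)$ on the right by any subsequent primitive transition matrix $T(\chi_i,\chi_{i+1})$ again yields a positive row, since each column of $T(\chi_i,\chi_{i+1})$ has a strictly positive entry. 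A routine induction on $s$ therefore shows $T(\beta)$ is a positive matrix, so the hypothesis of Theorem \ref{main} is met, and $\tau(\mu,q) = \min_L \tau_L(\mu,q)$ for all $q$.

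To extract the explicit form, for $q \geq 0$ Corollary \ref{Cor:min} directly identifies the minimum as $\tau_E(\mu,q)$. For $q < 0$, Corollary \ref{box} (singleton case) gives $\tau_L(\mu,q) = q\,d_{\max}^L$ for every singleton maximal loop class $L \neq E$; taking the minimum over these $L$ and using $q<0$ to reverse the extremum yields
\begin{equation*}
\min_{L \neq E} \tau_L(\mu,q) \;=\; q \max_{L \neq E} d_{\max}^L \;=\; qd,
\end{equation*}
and combining with the contribution of $E$ gives $\tau(\mu,q) = \min\{qd,\tau_E(\mu,q)\}$. The main step is the positivity verification for transition paths, but once the scalar-primitive-matrix assumption is recast as the single-neighbour condition at $\gamma$, the propagation of positivity through a $1 \times J$ row is immediate, so no genuine obstacle arises beyond this observation.
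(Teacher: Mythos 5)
Your proposal is correct and follows essentially the same route as the paper: verify that every transition path originates in a singleton loop class with a one-element neighbour set, so its transition matrix is a positive row vector (each column of a transition matrix having a nonzero entry), invoke Theorem \ref{main}, and then evaluate the minimum using $\tau_L(\mu,q)=qd_{\max}^{L}$ for singletons. The only difference is that you spell out the inductive propagation of positivity and the reduction of the scalar hypothesis to the single-neighbour condition, which the paper states more tersely.
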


\begin{proof}
If $L$ is a singleton maximal loop class with primitive transition matrix a
scalar, then the transition matrix of any transition path beginning at $L$
is a row vector. As every column of a transition matrix has a non-zero
entry, it follows that such a transition matrix is positive.

There are no transition paths starting from the essential class. As all
other maximal loop classes are singletons with scalar transition matrices,
we have that all transition paths are positive. Thus the hypotheses of
Theorem \ref{main} are satisfied and $\tau (\mu ,q)=\min_{L}\tau _{L}(\mu ,q)
$ for all $q$. 

The remaining claims follow upon recalling that if $L$ is a singleton loop
class, then $\tau _{L}(\mu ,q)=qd_{\max }^{L}$ (and $=qd_{\min }^{L}$) for
all $q$; see Corollary \ref{box} \ref{box iii}. 
\end{proof}

\begin{remark}
As with Theorem \ref{main}, the assumptions of this Proposition are
structural properties of the underlying IFS and hence every self-similar
measure associated with such an IFS will satisfy the Proposition.
\end{remark}

For both the Bernoulli convolutions with simple Pisot inverses as
contractions and the $(m,d)$-Cantor measures, there are three maximal loop
classes, the essential class and two singletons (corresponding to the
endpoints $0,1$) and the latter have scalars as their primitive transition
matrices. Hence the Proposition applies to these examples.\ We will discuss
these classes of finite type measures in more detail later in this
subsection; see Examples \ref{Cantor} and \ref{Pisot}.

In fact, these Bernoulli convolutions and Cantor-like measures are special
examples of the following more general situation.

\begin{lemma}
\label{structure}Suppose the IFS is equicontractive and of finite type and
that the set of essential points is $(0,1)$. Then there are (precisely) two
other maximal loop classes. These are both singletons, corresponding to the
points $0,1$ respectively, with primitive transition matrices that are
scalars.
\end{lemma}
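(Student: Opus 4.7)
The plan is to produce the two non-essential maximal loop classes from the endpoints $0,1$ of $K=[0,1]$, establish their singleton and scalar-transition structure, and rule out additional non-essential classes using the hypothesis $K_E=(0,1)$. For the first task, since $0$ is the left endpoint of $K$ it lies in exactly one net interval at each level --- the leftmost, $\Delta_n^0$ --- so it admits a unique symbolic representation $\sigma^0=(\gamma_0,\beta_1,\beta_2,\dots)$. As $\Omega$ is finite, $(\beta_n)$ is eventually periodic, with period lying in some maximal loop class $L_0$; because $0\notin K_E$, we have $L_0\neq E$. A symmetric argument at $1$ produces $L_1\neq E$.

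\medskip

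Next I would show $L_0=\{\alpha_0\}$ is a singleton with $T(\alpha_0,\alpha_0)$ a scalar. The leftmost net interval $\Delta_n^0=[0,h_n]$ satisfies $h_n\leq \rho_{\min}^n$, and any $\sigma\in\Lambda_n$ associated with its neighbour set satisfies $S_\sigma(K)\subseteq K=[0,1]$ (hence $S_\sigma(0)\geq 0$) and $S_\sigma(K)\cap(0,h_n)\neq\emptyset$ (hence $S_\sigma(0)<h_n$). Because $S_\sigma(x)=\rho_{\min}^n x+S_\sigma(0)$ in the equicontractive case, the rescaled neighbourhood type of $\Delta_{n+1}^0$ is inherited from that of $\Delta_n^0$, and finite type forces $(\beta_n)$ to stabilize at some $\alpha_0$. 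To rule out any nontrivial loop $\alpha_0\to\gamma\to\alpha_0$ with $\gamma\neq\alpha_0$, note that such a loop would place a periodic point $y$ in $\mathrm{int}(\Delta_n^0)\subseteq(0,1)=K_E$; $y$ would then admit a second representation in $E_e^0$, and the equicontractive rescaling shows the non-$E$ representation of $y$ follows boundary-child dynamics reducing to those of $L_0$ (or $L_1$), forcing $\gamma=\alpha_0$. Scalarity of $T(\alpha_0,\alpha_0)$ then follows from $\alpha_0$ having a single-entry neighbour set (the only survivor of the $0\leq S_\sigma(0)<h_n$ pinch as $n\to\infty$). The analysis for $L_1$ is symmetric, using the rightmost chain.

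\medskip

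Finally, I would show any maximal loop class $L\neq E$ equals $L_0$ or $L_1$. Pick a cycle in $L$ whose periodic point is $x\in K_L$. If $x\in\{0,1\}$, uniqueness of endpoint representations gives $L\in\{L_0,L_1\}$. Otherwise $x\in(0,1)=K_E$ forces a second representation in $E_e^0$, so $x$ is a boundary point of some net interval; writing $x=S_\omega(0)$ (the $S_\omega(1)$ case is symmetric), the non-$E$ representation of $x$ tracks leftmost-descendent dynamics of the rescaled copy $S_\omega(K)$, which under equicontraction reduce to those at $0$, so $L=L_0$. The main obstacle throughout is the ``rescaling'' argument invoked twice above: it requires carefully aligning the neighbourhood type of a net interval near an interior boundary point $S_\omega(0)$ with that of $\Delta_m^0$ for appropriate $m$, tracking all ``external'' overlap contributions to the neighbour set. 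The hypothesis $K_E=(0,1)$ is precisely what ensures that these overlap contributions cannot introduce a spurious third maximal loop class, and hence is essential to the argument.
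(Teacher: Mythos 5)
Your overall strategy --- produce the two classes from the endpoints $0,1$ and use the hypothesis $K_{E}=(0,1)$ to exclude anything else --- is the same as the paper's, but the two steps that carry all the content are asserted rather than proved. First, the singleton/scalar structure at $0$: you derive $0\leq S_{\sigma }(0)<h_{n}$ for any neighbour $\sigma $ of the leftmost net interval $[0,h_{n}]$, but stop there. The missing observation is that $S_{\sigma }(0)$ is itself a level-$n$ net point, so $S_{\sigma }(0)<h_{n}$ forces $S_{\sigma }(0)=0$; hence every neighbour contributes the same pair to the neighbour set, which is therefore a singleton at every level (there is no ``pinch as $n\rightarrow \infty $'' to invoke --- the neighbour set is a level-$n$ object). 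Moreover, eventual periodicity of the leftmost chain does not give ``stabilization'': a priori the cycle could have period $k>1$, in which case the resulting loop class would not be a singleton even though each primitive transition matrix along it is $1\times 1$. The paper closes this by explicitly checking that the leftmost net intervals at levels $1$ and $2$ have the same normalized length (splitting on whether $S_{0}(1)\leq S_{1}(0)$) and the same singleton neighbour set, hence the same characteristic vector $\chi _{0}$, so that $\chi _{0}$ is its own leftmost child and the period is $1$.

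Second, and more seriously, your exclusion of further maximal loop classes rests on the claim that the non-essential representation of an interior net point $x=S_{\omega }(0)$ ``tracks leftmost-descendent dynamics of the rescaled copy $S_{\omega }(K)$, which under equicontraction reduce to those at $0$.'' This is false in general: the neighbour sets of the net intervals abutting $x$ from the right receive contributions from other cylinders $S_{\omega ^{\prime }}(K)$ overlapping near $x$, so they are not rescaled copies of the neighbour sets at $0$. Example \ref{ExSp1} of the paper is a direct counterexample to the mechanism you propose: $1/2$ is an interior net point whose right-hand representation is eventually the singleton class $\{\gamma _{4}\}$, which is distinct from the class $\{\gamma _{1}\}$ at $0$. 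You flag this as ``the main obstacle'' and defer to the hypothesis $K_{E}=(0,1)$, but you never show how that hypothesis closes the gap; as written, your argument would equally ``prove'' that $\{\gamma _{4}\}$ in that example coincides with the class at $0$. The paper's argument for this step is different in kind: it is a counting argument --- a loop class properly containing $\{\chi _{0}\}$, or a maximal loop class other than $E$, $\{\chi _{0}\}$ and $\{\chi _{1}\}$, would force infinitely many points of $(0,1)$ to fail to be essential, contradicting $K_{E}=(0,1)$. So the gap in your second half is not cosmetic, and the first half needs the explicit level-one/level-two computation to be completed.
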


\begin{proof}
Assume the IFS consists of similarities $\{S_{j}\}_{j=0}^{m}$, ordered in
the natural way. The left-most net interval at level $1$ is either $%
[0,S_{0}(1)]$ or $[0,S_{1}(0)]$, depending on which is smaller, $S_{0}(1)$
or $S_{1}(0)$. At level $2$, it is the interval $[0,a]$ where $a=\min
\{S_{i}S_{j}(0),S_{i}S_{j}(1):i,j\}$. It is straight forward to check that
if $S_{0}(1)\leq S_{1}(0)$, then the minimum value is $S_{0}S_{0}(1)$ and
then the net interval $[0,a]$ has normalized length $1$, the same as the
level $1$ net interval $[0,S_{0}(1)]$. Otherwise, the minimum value is $%
S_{0}S_{1}(0)$ and the net interval $[0,a]$ has the same normalized length
as the level $1$ net interval, $[0,S_{1}(0)]$. In both cases, these level
one and level two net intervals have the same neighbour set, the singleton $%
\{(0,0)\}$. Hence if $\chi _{0}$ is the characteristic vector of the
left-most interval at level one, $\chi _{0}$ is also its left-most child, so 
$\{\chi _{0}\}$ is a loop class and $0$ has symbolic representation $(\gamma
_{0},\chi _{0},\chi _{0},\dots )$. As there is only one neighbour set, the
primitive transition matrix is a scalar. As $0$ is not an essential point, $%
\chi _{0}$ is not in the essential class.

By symmetry the same is true for the right-most net intervals with $1$
having symbolic representation $(\gamma _{0},\chi _{1},\chi _{1},\dots)$.

The loop classes $L_{0}=\{\chi _{0}\}$ and $L_{1}=\{\chi _{1}\}$ are maximal
because otherwise there would be infinitely many points that are not
essential points. There are no other maximal loop classes (other than the
essential class) for the same reason.
\end{proof}

\begin{corollary}
\label{E}Suppose $\mu$ is an equicontractive, finite type measure and that
the set of essential points is $(0,1)$. Let $\Delta$ be any finite union of
net intervals of a fixed level, with characteristic vectors in the essential
class $E$.

\begin{enumerate}
\item Then 
\begin{equation*}
\tau (\mu ,q)=\left\{ 
\begin{array}{cc}
\tau _{E}(\mu ,q)=\tau (\mu |_{\Delta },q) & \text{if }q\geq 0 \\ 
\min \{qd,\tau _{E}(\mu ,q)\} & \text{if }q<0%
\end{array}
\right. .
\end{equation*}
\label{E i}

\item If $d_{\max}^{E}<d$, then there is a choice of $q_{0}<0$ such that 
\begin{equation}
\tau (\mu ,q)=\left\{ 
\begin{array}{cc}
\tau _{E}(\mu ,q)=\tau (\mu |_{\Delta },q) & \text{if }q\geq q_{0} \\ 
qd & \text{if }q<q_{0}%
\end{array}
\right. .  \label{C1}
\end{equation}
We can take $q_{0}\geq -1/(d-d_{\max}^{E})$ to be the unique solution to $%
\tau _{E}(q)=qd$. \label{E ii}

\item If $d_{\max}^{E}\geq d$, then $\tau (\mu ,q)=\tau _{E}(\mu ,q)=\tau
(\mu |_{\Delta },q)$ for all $q$. \label{E iii}
\end{enumerate}
\end{corollary}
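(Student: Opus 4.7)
The plan is to combine Lemma \ref{structure} with Proposition \ref{summary} to reduce everything to properties of $\tau_E$ and the line $q d$. Lemma \ref{structure} verifies the hypotheses of Proposition \ref{summary}: the maximal loop classes are exactly $E$ together with the two singletons corresponding to $0$ and $1$, and their primitive transition matrices are scalars. Proposition \ref{summary} then yields
\[
\tau(\mu,q) \;=\; \begin{cases} \tau_E(\mu,q) & q\geq 0,\\ \min\{qd,\tau_E(\mu,q)\} & q<0. \end{cases}
\]
The identification $\tau_E(\mu,q)=\tau(\mu|_\Delta,q)$ for all $q$ is exactly the earlier proposition on the essential-class spectrum, applied to the finite union $\Delta$ of net intervals whose characteristic vectors lie in $E$. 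Combined, these give part (i).

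For part (ii), set $f(q):=\tau_E(\mu,q)-qd$. Since $\tau_E$ is concave (as are all $L^q$-spectra), $f$ is concave and continuous. Corollary \ref{asy}(ii), applied to the loop class $E$ which admits interior paths, yields $\tau_E(q)/q\to d_{\max}^E$ as $q\to -\infty$, so $f(q)/q\to d_{\max}^E-d<0$ under the hypothesis, and hence $f(q)\to +\infty$ as $q\to -\infty$. On the other hand $f(0)=\tau_E(\mu,0)\leq 0$, since the sum defining $\tau_E$ at $q=0$ is at least $1$. The intermediate value theorem produces a root $q_0\in(-\infty,0]$; uniqueness follows from concavity, because two distinct roots $q_0<q_1$ would force $f\leq 0$ on $(-\infty,q_0)\cup(q_1,\infty)$ by the chord property, contradicting $f\to+\infty$ at $-\infty$. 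Hence $f\geq 0$ on $(-\infty,q_0]$ and $f\leq 0$ on $[q_0,\infty)$, which translates directly into the claimed piecewise formula (\ref{C1}). The bound $q_0\geq -1/(d-d_{\max}^E)$ is then immediate from Corollary \ref{box}(ii): evaluating $\tau_E(q)\geq q d_{\max}^E-1$ at $q_0$ and using $\tau_E(q_0)=q_0 d$ gives $q_0(d-d_{\max}^E)\geq -1$, from which the estimate follows since $d-d_{\max}^E>0$.

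For part (iii), Corollary \ref{box}(ii) yields $\tau_E(\mu,q)\leq q d_{\max}^E$ whenever $q<0$. If $d_{\max}^E\geq d$, then $q d_{\max}^E\leq q d$ since $q<0$, so $\tau_E(\mu,q)\leq qd$ and the minimum in the formula from part (i) is always achieved by $\tau_E$. Combined with $\tau_E(\mu,q)=\tau(\mu|_\Delta,q)$, this gives the claim.

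The only genuinely subtle point is the existence and uniqueness of $q_0$ in (ii); concavity of $f$ together with the asymptotic behavior $f\to+\infty$ at $-\infty$ handles this cleanly. Everything else is a direct bookkeeping application of Lemma \ref{structure}, Proposition \ref{summary}, the preceding essential-class proposition, and the lower bounds in Corollary \ref{box}.
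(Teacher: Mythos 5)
Your proof is correct and follows essentially the same route as the paper: part (i) by combining Lemma \ref{structure}, Proposition \ref{summary} and the essential-class proposition; part (ii) from the two-sided bound $qd_{\max}^{E}\geq \tau _{E}(q)\geq qd_{\max}^{E}-1$ together with concavity and the intermediate value theorem (the paper just evaluates directly at $q_{1}=-1/(d-d_{\max}^{E})$ instead of invoking the asymptotic $\tau _{E}(q)/q\rightarrow d_{\max}^{E}$, but these rest on the same estimates); and part (iii) exactly as in the paper. The one small point worth tightening is that to get $q_{0}<0$ strictly you need $\tau _{E}(0)<0$, not merely $\tau _{E}(0)\leq 0$; this does hold here because the hypothesis $K_{E}=(0,1)$ forces the number of length-$k$ paths in $E_{a}$ to grow geometrically, so $\tau _{E}(0)=-\log \sp(I_{E})/\left\vert \log \rho _{\min }\right\vert <0$.
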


\begin{proof}
Part \ref{E i}. This follows directly from the previous work.

For Part \ref{E ii}, recall that we saw in Proposition \ref{taubound} that
if $q<0$, then 
\begin{equation*}
qd_{\max }^{E}\geq \tau _{E}(q)\geq qd_{\max }^{E}-1.
\end{equation*}%
Thus if $q_{1}=-1/(d-d_{\max }^{E})$, then $\tau _{E}(q_{1})\geq dq_{1}$. It
is easy to see that $\tau _{E}(0)<0$. As $\tau _{E}$ is concave, and hence
continuous, there is some $q_{0}\in \lbrack q_{1},0)$ such that $\tau
_{E}(q_{0})=dq_{0}$. Using the fact that if $f$ is a concave function and $%
z\in (x_{1},x_{2})$ we have 
\begin{equation*}
\frac{f(x_{1})-f(z)}{z-x_{1}}\leq \frac{f(z)-f(x_{2})}{x_{2}-z},
\end{equation*}%
it can be checked that there can only be one choice of $q_{0}$ with $\tau
_{E}(q_{0})=dq_{0}$. The statements of (\ref{C1}) are clearly satisfied for
this choice of $q_{0}$ $\geq q_{1}=-1/(d-d_{0})$.

Part \ref{E iii}. We simply note that if $d_{\max}^{E}\geq d$ and $q<0$,
then $\tau _{E}(q)\leq qd_{\max}^{E}\leq qd$ and hence $\min \{qd,\tau
_{E}(\mu ,q)\}=\tau _{E}(\mu ,q)$.
\end{proof}

\begin{corollary}
Assume $\mu$ is associated with an equicontractive, finite type IFS. Suppose 
$K_{E}=$ $(0,1)$ and that 
\begin{equation*}
\sup \{\dim _{\mathrm{loc}}\mu (x):x\in K_{E}\}<\max \{\dim _{\mathrm{loc}%
}\mu (0),\dim _{\mathrm{loc}}\mu (1)\}.
\end{equation*}
Then 
\begin{equation*}
d=\max_{j=0,1}\dim _{\mathrm{loc}}\mu (j)>d_{\max}^{E}
\end{equation*}
If $\Delta$ is any non-empty, closed subinterval of $(0,1)$, then 
\begin{equation*}
\tau (\mu ,q)=\left\{ 
\begin{array}{cc}
\tau (\mu |_{\Delta },q) & \text{if }q\geq q_{0} \\ 
qd & \text{if }q<q_{0}%
\end{array}
\right.
\end{equation*}
\newline
where $q_{0}<0$ is the unique solution to $\tau (\mu |_{\Delta },q)=qd$ .
\end{corollary}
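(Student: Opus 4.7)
The plan combines Lemma \ref{structure}, Corollary \ref{Cor:dE}, Corollary \ref{E}(ii), and the Proposition preceding Corollary \ref{Cor:min}. First I would verify $d > d_{\max}^{E}$ and identify $d$. Since the IFS is equicontractive with $K_{E}=(0,1)$, Lemma \ref{structure} shows that the only maximal loop classes besides $E$ are the two singletons $L_{j}=\{\chi _{j}\}$ ($j=0,1$) with scalar primitive transition matrices, and the constant path $(\chi _{j},\chi _{j},\ldots )$ is (the tail of) the symbolic representation of $j$. By Example \ref{Ex:periodic} this gives $d_{\max }^{L_{j}}=d_{\min }^{L_{j}}=\dim _{\mathrm{loc}}\mu (j)$, so $d=\max_{j=0,1}\dim _{\mathrm{loc}}\mu (j)$. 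Corollary \ref{Cor:dE} identifies $d_{\max }^{E}=\sup \{\dim _{\mathrm{loc}}\mu (x):x\in K_{E}\}$, so the hypothesis is precisely $d_{\max }^{E}<d$. Corollary \ref{E}(ii), applied with any fixed finite union $\Delta ^{\ast}$ of level-$N$ net intervals whose characteristic vectors lie in $E$, then produces the unique $q_{0}<0$ solving $\tau _{E}(q)=qd$ and yields $\tau (\mu ,q)=qd$ for $q\leq q_{0}$ and $\tau (\mu ,q)=\tau _{E}(\mu ,q)$ for $q\geq q_{0}$.

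The key remaining step is to show $\tau (\mu |_{\Delta },q)=\tau _{E}(\mu ,q)$ for $q\geq q_{0}$ whenever $\Delta $ is a closed subinterval of $(0,1)$. For $n$ sufficiently large I would choose finite unions of level-$n$ net intervals $\Delta _{\mathrm{in}}\subseteq \Delta \subseteq \Delta _{\mathrm{out}}\subseteq (0,1)$, all with characteristic vectors in $E$; by the Proposition preceding Corollary \ref{Cor:min}, $\tau (\mu |_{\Delta _{\mathrm{in}}},q)=\tau (\mu |_{\Delta _{\mathrm{out}}},q)=\tau _{E}(\mu ,q)$ for every real $q$. For $q\geq 0$, the pointwise bounds $\mu |_{\Delta _{\mathrm{in}}}\leq \mu |_{\Delta }\leq \mu |_{\Delta _{\mathrm{out}}}$ together with the easy monotonicity $\nu _{1}\leq \nu _{2}\Rightarrow \tau (\nu _{1},q)\geq \tau (\nu _{2},q)$ sandwich $\tau (\mu |_{\Delta },q)=\tau _{E}(\mu ,q)$ immediately. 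For $q_{0}\leq q<0$ this monotonicity breaks down, so I would adapt the proof of Proposition \ref{tauformula} to express $\tau (\mu |_{\Delta },q)$ as the liminf of $(n\log \rho _{\min })^{-1}\log \sum _{\Delta ^{\prime }}\mu |_{\Delta }(\Delta ^{\prime })^{q}$ over level-$n$ net intervals $\Delta ^{\prime }$ meeting $\supp \mu |_{\Delta }$, and split the sum into interior terms ($\Delta ^{\prime }\subseteq \Delta $, for which $\mu |_{\Delta }(\Delta ^{\prime })=\mu (\Delta ^{\prime })$ and whose aggregate is pinched between the $\Delta _{\mathrm{in}}$- and $\Delta _{\mathrm{out}}$-sums, yielding the $\tau _{E}$ asymptotic) and at most two boundary terms per level. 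The endpoints of $\Delta $ lie in $K_{E}$ and so have upper local dimension at most $d_{\max }^{E}$; combined with $\tau _{E}(q)\leq qd_{\max }^{E}$ from Proposition \ref{taubound} and the fact that the interior sum already contains $\Theta (\rho _{\min }^{-n})$ comparably sized terms, the two boundary terms are dominated by the interior sum asymptotically. Uniqueness of $q_{0}$ as the solution of $\tau (\mu |_{\Delta },q)=qd$ then follows from concavity of $\tau (\mu |_{\Delta },\cdot )$ together with $\tau (\mu |_{\Delta },0)\leq 0$ and $\tau (\mu |_{\Delta },q)/q\to d_{\max }^{E}<d$ as $q\to -\infty $, exactly as in the proof of Corollary \ref{E}(ii).

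The principal obstacle is the analysis for $q\in [q_{0},0)$. Because raising to a negative power reverses inequalities and the supremum in the definition of $\tau (\mu |_{\Delta },q)$ is restricted to balls centered in $\supp \mu |_{\Delta }$, the naive sandwich $\mu |_{\Delta _{\mathrm{in}}}\leq \mu |_{\Delta }\leq \mu |_{\Delta _{\mathrm{out}}}$ does not pass directly to the $L^{q}$-spectra. The work is in quantifying that the at most two boundary net intervals per level, even when $\mu |_{\Delta }(\Delta ^{\prime })$ is considerably smaller than $\mu (\Delta ^{\prime })$, cannot dominate the $\Theta (\rho _{\min }^{-n})$ interior terms collected by $\mu |_{\Delta _{\mathrm{in}}}$ and $\mu |_{\Delta _{\mathrm{out}}}$.
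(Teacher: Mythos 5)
Your identification of $d$ and $d_{\max}^{E}$ and the reduction to Corollary \ref{E} \ref{E ii} match the paper exactly: the endpoints $0,1$ have unique periodic symbolic representations, so $\dim _{\loc}\mu (j)=d_{\max}^{L_{j}}$, Corollary \ref{Cor:dE} converts the hypothesis into $d_{\max}^{E}<d$, and Corollary \ref{E} \ref{E ii} supplies $q_{0}$ and the formula in terms of $\tau _{E}$. The divergence is in the final step, $\tau (\mu |_{\Delta},q)=\tau _{E}(\mu ,q)$ for $q\geq q_{0}$. The paper disposes of this in one line by asserting the monotonicity $Y\subseteq X\Rightarrow \tau (\mu |_{X},q)\leq \tau (\mu |_{Y},q)$ for \emph{all} $q$ and sandwiching $\Delta$ between a single net interval and a finite union of net intervals with characteristic vectors in $E$, both of whose restricted spectra equal $\tau _{E}(\mu ,q)$ by the Proposition preceding Corollary \ref{Cor:min}. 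You accept this monotonicity only for $q\geq 0$ and attempt a direct net-interval computation for $q\in \lbrack q_{0},0)$.

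Your skepticism about the $q<0$ monotonicity is not unreasonable (for $q<0$ the optimal packing for $\mu |_{X}$ need not be centred in $\supp \mu |_{Y}$, and restriction can create balls of very small measure near the boundary of $Y$), but your replacement argument has a concrete gap at exactly the point where the difficulty lives. The boundary term at the left endpoint $a$ of $\Delta$ is $(\mu (\Delta ^{\prime}\cap \Delta ))^{q}=(\mu ([a,h_{n}]))^{q}$, where $h_{n}$ is the right endpoint of the level-$n$ net interval containing $a$. The bound $\overline{\dim}_{\loc}\mu (a)\leq d_{\max}^{E}$ gives a lower bound on the two-sided quantity $\mu (B(a,r))$, not on the one-sided quantity $\mu ([a,a+r])$; moreover $h_{n}-a$ admits no lower bound comparable to $\rho _{\min}^{n}$, since $a$ may sit arbitrarily close to the right end of its level-$n$ net interval along a subsequence. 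Hence $\mu ([a,h_{n}])$ is not controlled at scale $\rho _{\min}^{n}$, and for $q<0$ this single term could a priori swamp the interior sum. The auxiliary claim that the interior sum contains $\Theta (\rho _{\min}^{-n})$ comparably sized terms is also unsupported: the number of level-$n$ net intervals grows like $\sp(I_{\Omega})^{n}$ and their measures are not mutually comparable. To close the gap you would need a genuine one-sided estimate at the endpoints of $\Delta$ — for instance via the symbolic representation of $a$ approached from the right, which lies in $E_{e}^{0}$, together with Lemma \ref{dmax} applied to the net intervals immediately to the right of $a$ — or else prove the paper's restriction monotonicity for $q<0$ directly rather than replacing it.
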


\begin{proof}
As $0,1$ are the endpoints of the support of $\mu $, they each have a unique
periodic symbolic representation. Hence $\dim _{\mathrm{loc}}\mu (j)=$ $%
d_{\max }^{L_{j}}$ where $L_{j}$ is the singleton maximal loop class
associated with $j=0,1$. Thus our hypothesis, together with Corollary \ref%
{Cor:dE}, implies $d_{\max }^{E}<d$ and therefore Corollary \ref{E} \ref{E
ii} applies.

It is easy to see from the definition of the $L^{q}$-spectrum that if $%
Y\subseteq X\subseteq $supp$\mu$, then $\tau (\mu |_{X},q)\leq \tau (\mu
|_{Y},q)$ for all $q$. Since any $\Delta$ described in the statement of the
Corollary will contain a net interval with a characteristic vector in the
essential class and is contained in a finite union of such net intervals, it
follows that $\tau _{E}(\mu ,q)=\tau (\mu |_{\Delta },q)$ for $q\geq q_{0}$.
\end{proof}

\begin{example}[$(m,d)$-Cantor measures]
\label{Cantor} Consider the equicontractive, finite type IFS with
contractions $S_{j}(x)=x/3+2j/9$ for $j=0,1,2,3$. Any associated
self-similar measure is a $(3,3)$-Cantor measure. There are two singleton
maximal loop classes, $L_{0}=\{\gamma _{1}\}$ and $L_{1}=\{\gamma _{7}\}$,
corresponding to the endpoints $0,1$. The characteristic vectors $\{\gamma
_{2},\gamma _{3},\gamma _{5},\gamma _{6}\}$ comprise the essential class $E$%
, with the open interval $(0,1)$ being the set of essential points. See
Figure \ref{fig:Cantor} for the transition graph. Thus Corollary \ref{E} \ref%
{E i} applies for any choice of probabilities.

If we take, for example, the probabilities $1/3,3/8,3/8,1/8$, the associated
measure is the rescaled $3$-fold convolution of the classical Cantor
measure. We have 
\begin{equation*}
d=\dim _{\mathrm{loc}}\mu (j)=\frac{\log 8}{\log 3}=d_{\max}^{L_{j}}\text{
for } j=0,1.
\end{equation*}
It is known that $\sup_{x\in K_{E}}\{\dim _{\mathrm{loc}}\mu (x)\}<\log
8/\log 3=d$ and that the union of the net intervals of level $N$ with
characteristic vectors in $E$ is the closed interval $\Delta (N)=[2\cdot
3^{-N},1-2\cdot 3^{-N}]$ .

Similar statements are true for any choice of regular probabilities, hence
we can appeal to Corollary \ref{E} \ref{E ii} in the regular case to
conclude that for any $N$, 
\begin{equation*}
\tau (\mu ,q)=\left\{ 
\begin{array}{cc}
\tau (\mu |_{\Delta (N)},q) & \text{if }q\geq q_{0} \\ 
qd & \text{if }q<q_{0}%
\end{array}
\right.
\end{equation*}
where $q_{0}<0$ is the unique solution to $\tau (\mu |_{\Delta (1)},q)=qd$.

More generally, similar statements hold for any $(m,d)$-Cantor measure with $%
m\geq d\geq 2$, with, again, the stronger conclusions if regular
probabilities are chosen. In this case, the union of the net intervals of
level $N$ with characteristic vectors in $E$ is the closed interval 
\begin{equation*}
\Delta (N)=[(d-1)/(md^{N}),1-(d-1)/(md^{N})].
\end{equation*}

Most of these results were previously found for the special cases of the $3$
-fold convolution of the Cantor measure in \cite[Thm. 1.1, 1.3]{FLW} and for 
$m<2d-2$ in \cite[Thm. 1.5]{Sh}. The reader can refer to \cite[Section 7]%
{HHM} and \cite[Section 5]{HHN} for proofs of the finite type structural
properties of the $(m,d)$-Cantor measures and facts about their local
dimensions. \ 

\begin{figure}[tbp]
\label{F:Cantor} 
\begin{tikzpicture}[node distance=1.5cm]
\node[circle, draw] (cr1) {$\gamma_0$};
\node[below of = cr1] (dummyB) {};
\node[circle, draw, below of = cr1] (cr3c) {$\gamma_4$};
\node[left of = dummyB] (dummyA) {};
\node[right of = dummyB] (dummyC) {};
\node[circle, draw, left of = dummyA] (cr2) {$\gamma_1$};
\node[circle, draw, right of = dummyC] (cr5) {$\gamma_7$};
\node[below of = cr2] (dummyD) {};
\node[below of = dummyA] (dummyE) {};
\node[below of = dummyB] (dummy3c) {};
\node[below of = dummyC] (dummyF) {};
\node[below of = cr5] (dummyG) {};
\node[circle, draw, below of = dummyD] (cr3a) {$\gamma_2$};
\node[below of = dummyE] (dummyH) {};
\node[below of = dummyF] (dummyI) {};
\node[circle, draw, below of = dummyG] (cr4a) {$\gamma_6$};
\node[circle, draw, below of = dummyH] (cr3b) {$\gamma_3$};
\node[circle, draw, below of = dummyI] (cr4b) {$\gamma_5$};

\path
  (cr1) edge[-triangle 45] (cr2)
  (cr1) edge[-triangle 45] (cr3a)
  (cr1) edge[-triangle 45] (cr3b)
  (cr1) edge[-triangle 45] (cr3c)
  (cr1) edge[-triangle 45] (cr4a)
  (cr1) edge[-triangle 45] (cr4b)
  (cr1) edge[-triangle 45] (cr5)
  (cr2) edge[>=triangle 45, loop left] (cr2)
  (cr2) edge[-triangle 45] (cr3a)
  (cr2) edge[-triangle 45] (cr3b)
  (cr2) edge[-triangle 45] (cr4a)
  (cr2) edge[-triangle 45] (cr4b)
  (cr5) edge[>=triangle 45, loop right] (cr5)
  (cr5) edge[-triangle 45] (cr3a)
  (cr5) edge[-triangle 45] (cr3b)
  (cr5) edge[-triangle 45] (cr4a)
  (cr5) edge[-triangle 45] (cr4b)
  (cr3a) edge[>=triangle 45, loop left] (cr3a)
  (cr3a) edge[-triangle 45, bend left=10] (cr4a)
  (cr3a) edge[-triangle 45, bend left=20] (cr3b)
  (cr3b) edge[>=triangle 45, loop left] (cr3b)
  (cr3b) edge[-triangle 45] (cr4a)
  (cr3b) edge[-triangle 45, bend left=20] (cr3a)
  (cr3c) edge[-triangle 45] (cr3b)
  (cr3c) edge[-triangle 45] (cr4a)
  (cr3c) edge[-triangle 45] (cr3a)
  (cr4a) edge[>=triangle 45, loop right] (cr4a)
  (cr4a) edge[-triangle 45, bend left=10] (cr3a)
  (cr4a) edge[-triangle 45, bend left=20] (cr4b)
  (cr4b) edge[>=triangle 45, loop right] (cr4b)
  (cr4b) edge[-triangle 45] (cr3a)
  (cr4b) edge[-triangle 45, bend left=20] (cr4a);
\end{tikzpicture}
\caption{$(3,3)$-Cantor measure}
\label{fig:Cantor}
\end{figure}
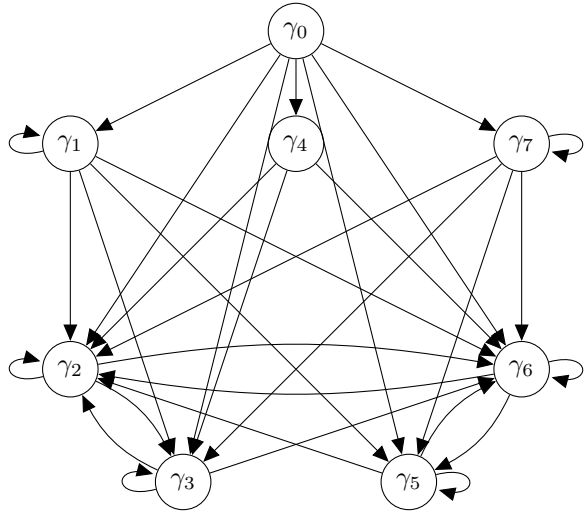
\end{example}

\begin{example}[Bernoulli convolutions]
\label{Pisot} Consider the IFS $S_{0}(x)=\varrho x,S_{1}(x)=\varrho
x+1-\varrho $ which generates the Bernoulli convolution with contraction
factor $\varrho $ the inverse of the golden mean. There are two singleton
maximal loop classes, $L_{0}=\{\gamma _{1}\}$ and $L_{1}=\{\gamma _{3}\}$,
corresponding to the endpoints $0,1$. The remaining maximal loop class given
by $\{\gamma _{2},\gamma _{4},\gamma _{5},\gamma _{6}\}$ form the essential
class $E$, so again the open interval $(0,1)$ is the set of essential
points. See Figure \ref{fig:Bernoulli}.

If $\mu $ is the Bernoulli convolution arising from the probabilities $%
p_{0},p_{1}=1-p_{0}$, then 
\begin{equation*}
d_{\max }^{L_{j}}=\dim _{\mathrm{loc}}\mu (j)=\frac{\log p_{j}}{\log \varrho 
}\text{ for }j=0,1.
\end{equation*}%
In the case that $p_{0}<p_{1}$ (the biased case), it was shown in \cite[Thm.
4.3]{HHN} that $d_{\max }^{E}<d=d_{\max }^{L_{0}}$. In contrast, it was
shown in \cite[Thm. 1.5]{FengLimited} that in the uniform case $d=d_{\max
}^{L_{j}}=d_{\max }^{E}$ for $j=0,1$.

In either the uniform or biased case we have 
\begin{equation*}
\tau (\mu ,q)=\left\{ 
\begin{array}{cc}
\tau _{E}(\mu ,q)=\tau (\mu |_{\Delta },q) & \text{if }q\geq 0 \\ 
\min \{qd,\text{ }\tau _{E}(\mu ,q)\} & \text{if }q<0%
\end{array}
\right. .
\end{equation*}
where $\Delta$ is any finite union of net intervals of a fixed level with
characteristic vectors in the essential class $E$. We can take as $\Delta$
any interval of the form $[\varrho ^{N}(1-\varrho ),1-\varrho ^{N}(1-\varrho
)]$, for example. In the biased case, $\tau (\mu ,q)$ is the straight line $%
qd$ for $q<q_{0}$, as per Corollary \ref{E} \ref{E ii}, while in the uniform
case, Corollary \ref{E} \ref{E iii} implies $\tau _{E}(\mu ,q)=\tau (\mu ,q)$
for all $q$.

In \cite[Thm. 4.12]{FengLimited}, Feng showed that in the uniform case $\tau
(\mu ,q)$ is the line $qd$ for $q\leq q_{1}$ for a suitable choice of $%
q_{1}<0$ and was not differentiable at that point. Hence $\tau _{E}$ is not
differentiable at $q_{1}$.

Similar statements hold for the Bernoulli convolutions with contraction
factor the inverse of a simple Pisot number other than the golden mean. (A
real number is called a simple Pisot number if it is the (unique) positive
root of a polynomial $x^{k}-x^{k-1}\cdot \cdot \cdot -x-1=0$ for some
integer $k\geq 2$.) In particular, in the biased case, it was shown in \cite[%
Section 4]{HHN} that $d>d_{\max}^{E}$, so we have $\tau (\mu ,q)= qd$ for $%
q<q_{0}$.

In the uniform case, again $d=d_{\max}^{E}$, so $\tau _{E}(\mu ,q)=\tau (\mu
,q)$ for all $q$. Moreover, Feng in \cite[Theorem 5.8]{FengLimited} proved
that $\tau (\mu ,q)=qd-\log x(q)/\log \varrho$ for an infinitely
differentiable function $x$ and that always $x(q)<1$. Thus, in contrast to
the golden mean case, here we have $\tau _{E}(\mu ,q)<qd$ for all $q$ and
the function $\tau _{E}$ is differentiable everywhere.

\begin{figure}[tbp]
\label{F:BC} 
\begin{tikzpicture}[node distance=1.5cm]
\node[circle, draw] (cr1) {$\gamma_0$};
\node[below of = cr1] (dummy) {};
\node[circle, draw, left of = dummy] (cr2){$\gamma_1$};
\node[circle, draw, right of = dummy] (cr4) {$\gamma_3$};
\node[circle, draw, below of = dummy] (cr3a) {$\gamma_2$};
\node[circle, draw, below of = cr3a] (cr5) {$\gamma_4$};
\node[circle, draw, left of = cr5] (cr3b) {$\gamma_6$};
\node[circle, draw, right of = cr5] (cr6) {$\gamma_5$};

\path
  (cr1) edge[-triangle 45] (cr2)
  (cr1) edge[-triangle 45] (cr3a)
  (cr1) edge[-triangle 45] (cr4)
  (cr2) edge[>=triangle 45, loop left] (cr2)
  (cr2) edge[-triangle 45] (cr3a)
  (cr3a) edge[-triangle 45, bend left = 20] (cr5)
  (cr3b) edge[-triangle 45, bend left = 20] (cr5)
  (cr4) edge[>=triangle 45, loop right] (cr4)
  (cr4) edge[-triangle 45] (cr3a)
  (cr5) edge[-triangle 45, bend left = 20] (cr3a)
  (cr5) edge[-triangle 45, bend left = 20] (cr3b)
  (cr5) edge[-triangle 45] (cr6)
  (cr6) edge[-triangle 45] (cr3a);
\end{tikzpicture}
\caption{Bernoulli convolution}
\label{fig:Bernoulli}
\end{figure}
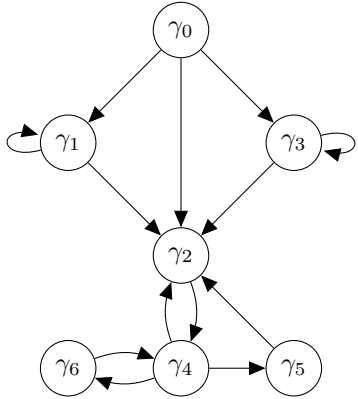
\end{example}

Here is another example which satisfies the conditions of Proposition \ref%
{summary}, but the set of essential points is not the full open interval $%
(0,1)$.

\begin{example}
Consider the equicontractive IFS with $S_{j}(x)=x/3+d_{j}$ for $%
d_{j}=0,1/5,2/5,2/3$. One can verify that there are three maximal loop
classes other than the essential class. These are all singletons
corresponding to the points $x=0,1$ and $x_{0}\in (0,1)$ respectively, (so $%
K_{E}\neq (0,1)$) and all have scalar primitive transition matrices.
Consequently, Theorem \ref{main} applies and thus $\tau (\mu ,q)=\min \tau
_{L}(\mu ,q)$ for all $q$ and all associated self-similar measures.

As a particular example, take $p_{3}=1/156 , p_{0}=5/256, p_{2}=25/156$ and $%
p_{1}=125/156$. Although we have not been able to prove this, computational
work suggests that $d_{\max}^{E}\leq 2.25$ and $d\geq 4.5$. If this is true,
then the $L^{q}$-spectrum would be the line $y=qd$ for large negative $q$.
\end{example}

For our final example, we study the $L^{q}$-spectrum of the self-similar
measure\ from Example \ref{ExSp1}.

\begin{example}
\label{ExSpecial} Take the IFS with $S_{j}(x)=x/3+d_{j}$ for $%
d_{j}=0,1/9,1/3,1/2,2/3$ and probabilities $p_{j}=4/17$ for $j=0,1,3,4$ and $%
p_{2}=1/17$, introduced in Example \ref{ExSp1}. We refer the reader to
Figure \ref{fig:ExSpecial} for the transition graph. There are 19
characteristic vectors and four singleton maximal loop classes, $%
L_{i}=\{\gamma _{i}\}$ for $i=1,4,7,8$, in addition to the essential class $E
$ with 9 characteristic vectors (whose transitions are not detailed in the
figure). The maximal loop classes $L_{1}$ and $L_{7}$ correspond to the
points at $0$ and $1$. These points only have one representation, hence
these loop classes are not adjacent to any other loop class. The loop class
at $L_{4}$ corresponds to a right most path, whose corresponding left most
path is in the essential class. The loop class at $L_{8}$ also corresponds
to a right most path, whose corresponding left most path is in the essential
class. The singleton maximal loop classes $L_{1},L_{4}$ (corresponding to
the point $1/2$) and $L_{7}$ have scalar primitive transition matrices, so
the transition matrices of transition paths beginning with these loop
classes are all positive. However, the maximal loop class $L_{8}$ has a $%
2\times 2$ lower triangular, primitive transition matrix. Furthermore, there
are 10 transition paths between $L_{8}$ and the essential class, namely 
\begin{equation*}
\begin{array}{llll}
\gamma _{8}\gamma _{3}\gamma _{12} & \gamma _{8}\gamma _{3}\gamma _{13} & 
\gamma _{8}\gamma _{3}\gamma _{14}\gamma _{5}\gamma _{9} & \gamma _{8}\gamma
_{3}\gamma _{14}\gamma _{5}\gamma _{10} \\ 
\gamma _{8}\gamma _{3}\gamma _{14}\gamma _{5}\gamma _{11} & \gamma
_{8}\gamma _{3}\gamma _{14}\gamma _{5}\gamma _{12} & \gamma _{8}\gamma
_{3}\gamma _{14}\gamma _{5}\gamma _{13} & \gamma _{8}\gamma _{3}\gamma
_{14}\gamma _{9} \\ 
\gamma _{8}\gamma _{3}\gamma _{14}\gamma _{10} & \gamma _{8}\gamma
_{3}\gamma _{14}\gamma _{11} &  & 
\end{array}%
\end{equation*}%
Some of the transition matrices for these transition paths (such as $\gamma
_{8}\gamma _{3}\gamma _{12}$) are not positive. So Theorem \ref{main} does
not apply directly.

However, the transition matrix $T(\gamma _{8}\gamma _{8}\gamma _{3})$ is
positive and one can only exit $L_{8}$ through $\gamma _{3}$. Furthermore,
one can only enter $L_{8}$ from $\gamma _{2}$. With this in mind, we can
modify our definitions slightly so that the proof of the Theorem will still
apply.

\begin{enumerate}
\item Include in the list of initial paths the paths starting with $\gamma
_{0}\gamma _{2}\gamma _{8}\gamma _{3}\dots $.

\item Include in the list of transition paths from $L_1$ to $E$ the paths
starting with $\gamma_1 \gamma_2 \gamma_8 \gamma_3 \dots$.

\item Require transition paths from $L_{8}$ to $E$ to start with $\gamma
_{8}\gamma _{8}\gamma _{3}\dots $.

\item Decompose $\sigma$ using these modified transition paths.
\end{enumerate}

With these modifications, there are still only finitely many initial and
transition paths and these are now all positive. There are also still only
finitely many choices for loop class components $\lambda _{i}$. From here
the Theorem follows as before and we again deduce that $\tau (\mu
,q)=\min_{L}\tau _{L}(\mu ,q)$ for all $q$.

We have $\tau _{L_{j}}(q)=qd_{\max }^{L_{j}}$ with $d_{\max }^{L_{j}}=\log
(17/4)/\log 3$ for $j=1,4,8$ and $d_{\max }^{L_{j}}=$ $\log 17/\log 3$ for $%
j=7$. Computational work, as explained in Appendix \ref{app}, shows that 
\begin{equation*}
\max \{\dim _{\mathrm{loc}}\mu (x):x\in K\}\leq \log (17/2)/\log 3.
\end{equation*}%
In particular, $d_{\max }^{E}<d_{\max }^{\Omega }=\log 17/\log 3=d$ and thus
for a suitable choice of $q_{0}$, $\tau (\mu ,q)$ is the straight line $\tau
(q)=qd_{\max }^{\Omega }$ for $q\leq q_{0}$. But $\{x:\dim _{\mathrm{loc}%
}\mu (x)=d_{\max }^{\Omega }\}$ is empty, contrary to the spirit of the
multifractal formalism.

\begin{figure}[tbp]
\label{F:ExSpec} 
\begin{tikzpicture}[node distance=1.5cm]

\node[circle, draw] (cr1) {$\gamma_0$};
\node[below of = cr1] (dummyA) {};
\node[left of = dummyA] (dummyB) {};
\node[circle, draw, left of = dummyB] (cr5) {$\gamma_4$};
\node[right of = dummyA] (dummyF) {};
\node[circle, draw, right of = dummyF] (cr2) {$\gamma_1$};
\node[circle, draw, below of = dummyB] (cr8) {$\gamma_8$};
\node[circle, draw, below of = dummyA] (cr3) {$\gamma_2$};
\node[below of = dummyF] (dummyC) {};
\node[circle, draw, right of = dummyC] (cr7) {$\gamma_7$};
\node[circle, draw, below of = cr8] (cr14) {$\gamma_{14}$};
\node[circle, draw, left of = cr14] (cr4) {$\gamma_3$};
\node[right of = cr14] (dummyD) {};
\node[circle, draw, right of = dummyD] (cr6a) {$\gamma_5$};
\node[circle, draw, right of = cr6a] (cr6b) {$\gamma_6$};
\node[below of = dummyD] (dummyE) {};
\node[circle, draw, below of = dummyE] (ec) {$
\substack{\mathrm{Essential Class} \\ \gamma_9, \gamma_{10}, \gamma_{11}, \gamma_{12}, \gamma_{13}\\ \gamma_{15}, \gamma_{16}, \gamma_{17}, \gamma_{18}}$};

\path
  (cr1) edge[-triangle 45] (cr2)
  (cr1) edge[-triangle 45] (cr3)
  (cr1) edge[-triangle 45, bend right=20] (cr4)
  (cr1) edge[-triangle 45] (cr5)
  (cr1) edge[-triangle 45] (cr6a)
  (cr1) edge[-triangle 45] (cr6b)
  (cr1) edge[-triangle 45] (cr7)
  (cr2) edge[>=triangle 45, loop right] (cr2)
  (cr2) edge[-triangle 45] (cr3)
  (cr3) edge[-triangle 45] (cr4)
  (cr3) edge[-triangle 45] (cr8)
  (cr3) edge[-triangle 45] (ec)
  (cr3) edge[-triangle 45] (cr6a)
  (cr4) edge[-triangle 45] (ec)
  (cr4) edge[-triangle 45] (cr14)
  (cr5) edge[>=triangle 45, loop left] (cr5)
  (cr5) edge[-triangle 45] (cr4)
  (cr6a) edge[-triangle 45] (ec)
  (cr6b) edge[-triangle 45] (ec)
  (cr7) edge[-triangle 45] (cr6a)
  (cr7) edge[-triangle 45] (cr6b)
  (cr7) edge[>=triangle 45, loop right] (cr7)
  (cr8) edge[>=triangle 45, loop left] (cr8)
  (cr8) edge[-triangle 45] (cr4)
  (cr14) edge[-triangle 45, bend left = 20] (cr6a)
  (cr14) edge[-triangle 45] (ec)
  (ec) edge[>=triangle 45, loop right] (ec);
\end{tikzpicture}
\caption{Example \protect\ref{ExSpecial}}
\label{fig:ExSpecial}
\end{figure}
\end{example}

\subsection{Open questions}

We conclude with a short list of questions we have not been able to answer.

\begin{enumerate}
\item Does the equality $\tau (\mu ,q)=\min \{\tau _{L}(\mu ,q):L$ maximal
loop class$\}$ hold for all finite type measures $\mu $? If not, in what
generality does it hold?

\item In Example \ref{Pisot} we saw that the function $\tau _{E}(\mu ,q)$
can have a point of non-differentiability. Under what assumptions are the
functions $\tau _{L}(\mu ,q)$ differentiable for all $q$? real analytic?

\item What more can be learned about the multifractal analysis of $\mu$ from
the functions $\tau _{L}$ and their Legendre transforms?

\item The finite type model can be thought of as an analog of the Markov
chain model where the probabilities are replaced by matrices with
non-negative entries. Can one develop an analogous theory? For instance,
could one define an analogue of the notion of an invariant measure and use
this measure to obtain information about the self-similar measure?
\end{enumerate}

\appendix

\section{Details of Example \protect\ref{ExSpecial}}

\label{app}

As was noted in the example, there are four loop classes outside of the
essential class and the points associated with these loop classes have local
dimension bounded above from $\frac{\log 17/4 }{\log 3}$. Hence it suffices
to bound the local dimension of points in the essential class. One can check
that the transition matrices within the essential class are given by

\begin{align*}
T(\gamma_9,\gamma_{15}) & = \frac{1}{17} \left[ 
\begin{array}{ccc}
{4} & 0 & 0 \\ 
0 & {4} & {4} \\ 
{4} & 0 & 1%
\end{array}
\right] & T(\gamma_9,\gamma_{16}) & = \frac{1}{17} \left[ 
\begin{array}{cccc}
{\ 4 } & {\ 4 } & 0 & 0 \\ 
0 & 0 & {\ 4 } & {\ 4 } \\ 
0 & {\ 4 } & 0 & 1%
\end{array}
\right] \\
T(\gamma_9,\gamma_{17}) & = \frac{1}{17} \left[ 
\begin{array}{cccc}
0 & {\ 4 } & {\ 4 } & 0 \\ 
1 & 0 & 0 & {\ 4 } \\ 
{\ 4 } & 0 & {\ 4 } & 0%
\end{array}
\right] & T(\gamma_9,\gamma_{18}) & = \frac{1}{17} \left[ 
\begin{array}{ccc}
0 & {\ 4 } & {\ 4 } \\ 
1 & 0 & 0 \\ 
{\ 4 } & 0 & {\ 4 }%
\end{array}
\right] \\
T(\gamma_9,\gamma_{10}) & = \frac{1}{17} \left[ 
\begin{array}{ccc}
1 & 0 & {\ 4 } \\ 
{\ 4 } & 1 & 0 \\ 
0 & {\ 4 } & 0%
\end{array}
\right] & T(\gamma_9,\gamma_{11}) & = \frac{1}{17} \left[ 
\begin{array}{cc}
1 & 0 \\ 
{\ 4 } & 1 \\ 
0 & {\ 4 }%
\end{array}
\right] \\
T(\gamma_{10},\gamma_{12}) & = \frac{1}{17} \left[ 
\begin{array}{cc}
{\ 4 } & 0 \\ 
{\ 4 } & 1 \\ 
{\ 4 } & {\ 4 }%
\end{array}
\right] & T(\gamma_{10},\gamma_{13}) & = \frac{1}{17} \left[ 
\begin{array}{ccc}
{\ 4 } & {\ 4 } & 0 \\ 
0 & {\ 4 } & 1 \\ 
0 & {\ 4 } & {\ 4 }%
\end{array}
\right] \\
T(\gamma_{10},\gamma_9) & = \frac{1}{17} \left[ 
\begin{array}{ccc}
0 & {\ 4 } & {\ 4 } \\ 
{\ 4 } & 0 & {\ 4 } \\ 
0 & 0 & {\ 4 }%
\end{array}
\right] & T(\gamma_{11},\gamma_{10}) & = \frac{1}{17} \left[ 
\begin{array}{ccc}
1 & 0 & {\ 4 } \\ 
0 & {\ 4 } & 0%
\end{array}
\right] \\
T(\gamma_{11},\gamma_{11}) & = \frac{1}{17} \left[ 
\begin{array}{cc}
1 & 0 \\ 
0 & {\ 4 }%
\end{array}
\right] & T(\gamma_{12},\gamma_{12}) & = \frac{1}{17} \left[ 
\begin{array}{cc}
{\ 4 } & 0 \\ 
{\ 4 } & 1%
\end{array}
\right] \\
T(\gamma_{12},\gamma_{13}) & = \frac{1}{17} \left[ 
\begin{array}{ccc}
{\ 4 } & {\ 4 } & 0 \\ 
0 & {\ 4 } & 1%
\end{array}
\right] & T(\gamma_{12},\gamma_{9}) & = \frac{1}{17} \left[ 
\begin{array}{ccc}
0 & {\ 4 } & {\ 4 } \\ 
{\ 4 } & 0 & {\ 4 }%
\end{array}
\right] \\
T(\gamma_{13},\gamma_{10}) & = \frac{1}{17} \left[ 
\begin{array}{ccc}
{\ 4 } & 0 & 0 \\ 
1 & 0 & {\ 4 } \\ 
0 & {\ 4 } & 0%
\end{array}
\right] & T(\gamma_{13},\gamma_{13}) & = \frac{1}{17} \left[ 
\begin{array}{ccc}
{\ 4 } & {\ 4 } & 0 \\ 
0 & 1 & 0 \\ 
0 & 0 & {\ 4 }%
\end{array}
\right] \\
T(\gamma_{15},\gamma_{15}) & = \frac{1}{17} \left[ 
\begin{array}{ccc}
{\ 4 } & 0 & 0 \\ 
0 & {\ 4 } & {\ 4 } \\ 
{\ 4 } & 0 & 1%
\end{array}
\right] & T(\gamma_{15},\gamma_{16}) & = \frac{1}{17} \left[ 
\begin{array}{cccc}
{\ 4 } & {\ 4 } & 0 & 0 \\ 
0 & 0 & {\ 4 } & {\ 4 } \\ 
0 & {\ 4 } & 0 & 1%
\end{array}
\right] \\
T(\gamma_{15},\gamma_{17}) & = \frac{1}{17} \left[ 
\begin{array}{cccc}
0 & {\ 4 } & {\ 4 } & 0 \\ 
1 & 0 & 0 & {\ 4 } \\ 
{\ 4 } & 0 & {\ 4 } & 0%
\end{array}
\right] & T(\gamma_{15},\gamma_{18}) & = \frac{1}{17} \left[ 
\begin{array}{ccc}
0 & {\ 4 } & {\ 4 } \\ 
1 & 0 & 0 \\ 
{\ 4 } & 0 & {\ 4 }%
\end{array}
\right] \\
T(\gamma_{16},\gamma_{10}) & = \frac{1}{17} \left[ 
\begin{array}{ccc}
{\ 4 } & 0 & 0 \\ 
1 & 0 & {\ 4 } \\ 
{\ 4 } & 1 & 0 \\ 
0 & {\ 4 } & 0%
\end{array}
\right] & T(\gamma_{16},\gamma_{13}) & = \frac{1}{17} \left[ 
\begin{array}{ccc}
{\ 4 } & {\ 4 } & 0 \\ 
0 & 1 & 0 \\ 
0 & {\ 4 } & 1 \\ 
0 & 0 & {\ 4 }%
\end{array}
\right] \\
T(\gamma_{17},\gamma_{15}) & = \frac{1}{17} \left[ 
\begin{array}{ccc}
{\ 4 } & 0 & 0 \\ 
0 & {\ 4 } & {\ 4 } \\ 
{\ 4 } & 0 & 1 \\ 
{\ 4 } & 0 & {\ 4 }%
\end{array}
\right] & T(\gamma_{17},\gamma_{16}) & = \frac{1}{17} \left[ 
\begin{array}{cccc}
{\ 4 } & {\ 4 } & 0 & 0 \\ 
0 & 0 & {\ 4 } & {\ 4 } \\ 
0 & {\ 4 } & 0 & 1 \\ 
0 & {\ 4 } & 0 & {\ 4 }%
\end{array}
\right] \\
T(\gamma_{17},\gamma_{17}) & = \frac{1}{17} \left[ 
\begin{array}{cccc}
0 & {\ 4 } & {\ 4 } & 0 \\ 
1 & 0 & 0 & {\ 4 } \\ 
{\ 4 } & 0 & {\ 4 } & 0 \\ 
0 & 0 & {\ 4 } & 0%
\end{array}
\right] & T(\gamma_{17},\gamma_{18}) & = \frac{1}{17} \left[ 
\begin{array}{ccc}
0 & {\ 4 } & {\ 4 } \\ 
1 & 0 & 0 \\ 
{\ 4 } & 0 & {\ 4 } \\ 
0 & 0 & {\ 4 }%
\end{array}
\right] \\
T(\gamma_{18},\gamma_{10}) & = \frac{1}{17} \left[ 
\begin{array}{ccc}
1 & 0 & {\ 4 } \\ 
{\ 4 } & 1 & 0 \\ 
0 & {\ 4 } & 0%
\end{array}
\right] & T(\gamma_{18},\gamma_{11}) & = \frac{1}{17} \left[ 
\begin{array}{cc}
1 & 0 \\ 
{\ 4 } & 1 \\ 
0 & {\ 4 }%
\end{array}
\right] \\
& & 
\end{align*}

Consider the following following $K_{i}:=Cone(V_{i})=\{\sum
a_{j}v_{j}:v_{j}\in V_{i},\sum a_{j}\geq 1,a_{j}\geq 0\}$ where the $V_{i}$
are given by 
\begin{align*}
V_{9}& =\{[1,1,1],[4,0,0],[4,4,0],[8,0,8]\} \\
V_{10}& =\{[0,0,4],[0,4,0],[1,1,1],[2,0,2],[2,2,1/2],[16,4,4]\} \\
V_{11}& =\{[1,1],[2,0]\} \\
V_{12}& =\{[0,2],[1,1]\} \\
V_{13}& =\{[0,0,2],[0,4,3],[2,0,0],[2,8,0]\} \\
V_{15}& =\{[0,0,2],[1,1,1],[10,8,8],[20,0,52],[9/2,2,6]\} \\
V_{16}&
=\{[0,0,4,4],[1,1,1,1],[2,0,8,0],[10,8,8,0],[20,0,52,20],[9/2,2,6,2]\} \\
V_{17}&
=\{[0,4,4,0],[0,8,0,8],[0,52,20,16],[1,1,1,1],[2,6,2,5/2],[8,8,0,10]\} \\
V_{18}& =\{[1,1,1],[2,0,2],[4,4,0],[6,2,5/2],[8,0,10],[52,20,16]\}
\end{align*}%
One can check that $T(\gamma _{i},\gamma _{j})V_{i}\subset \frac{2}{17}V_{j}$
for all valid combinations of $\gamma _{i}$ and $\gamma _{j}$. This implies
that a product $\Vert T(\gamma _{i_{0}}\gamma _{i_{1}}\dots \gamma
_{i_{n}})\Vert \geq \left( \frac{2}{17}\right) ^{n}$ which proves the
desired result.

\end{document}